\def\ge{\geqslant}
\def\le{\leqslant}
\def\a{\alpha}
\def\b{\beta}
\def\g{\gamma}
\def\d{\delta}
\def\L{\Lambda}
\def\e{\epsilon}
\def\o{\omega}
\def\s{\sigma}
\def\t{\tau}
\def\th{\theta}
\def\k{\kappa}
\def\l{\lambda}
\def\z{\zeta}
\def\i{^{-1}}
\def\<{\langle}
\def\>{\rangle}
\newcommand{\fkX}{\ensuremath{\mathfrak{X}}\xspace}
\newcommand{\bG}{\mathbf G}
\newcommand{\BF}{\ensuremath{\mathbb {F}}\xspace}
\newcommand{{\BG}}{\ensuremath{\mathbb {G}}\xspace}
\newcommand{\BH}{\ensuremath{\mathbb {H}}\xspace}
\newcommand{\BI}{\ensuremath{\mathbb {I}}\xspace}
\newcommand{{\BK}}{\ensuremath{\mathbb {K}}\xspace}
\newcommand{\BL}{\ensuremath{\mathbb {L}}\xspace}
\newcommand{\BQ}{\ensuremath{\mathbb {Q}}\xspace}
\newcommand{\BR}{\ensuremath{\mathbb {R}}\xspace}
\newcommand{\BT}{\ensuremath{\mathbb {T}}\xspace}
\newcommand{\BU}{\ensuremath{\mathbb {U}}\xspace}
\newcommand{\BZ}{\ensuremath{\mathbb {Z}}\xspace}
\newcommand{\CA}{\ensuremath{\mathcal {A}}\xspace}
\newcommand{\CB}{\ensuremath{\mathcal {B}}\xspace}
\newcommand{\CC}{\ensuremath{\mathcal {C}}\xspace}
\newcommand{\CE}{\ensuremath{\mathcal {E}}\xspace}
\newcommand{\CG}{\ensuremath{\mathcal {G}}\xspace}
\newcommand{\CH}{\ensuremath{\mathcal {H}}\xspace}
\newcommand{\CI}{\ensuremath{\mathcal {I}}\xspace}
\newcommand{\CK}{\ensuremath{\mathcal {K}}\xspace}
\newcommand{\CL}{\ensuremath{\mathcal {L}}\xspace}
\newcommand{\CO}{\ensuremath{\mathcal {O}}\xspace}
\newcommand{\CQ}{\ensuremath{\mathcal {Q}}\xspace}
\newcommand{\CR}{\ensuremath{\mathcal {R}}\xspace}
\newcommand{\CT}{\ensuremath{\mathcal {T}}\xspace}
\newcommand{\Ad}{{\mathrm{Ad}}}
\newcommand{\ad}{{\mathrm{ad}}}
\DeclareMathOperator{\charac}{char}
\DeclareMathOperator{\Gal}{Gal}
\let\Im\relax
\DeclareMathOperator{\Im}{Im}
\DeclareMathOperator{\Nm}{Nm}
\newcommand{\red}{\ensuremath{\mathrm{red}}\xspace}
\DeclareMathOperator{\tr}{tr}
\newcommand{\ov}{\overline}
\def\bG{\mathbf{G}}
\def\brk{{\breve k}}
\def\dw{{\dot w}}
\def\COk{{\CO_{\brk}}}
\def\pr{{\rm pr}}
\def\tPhi{\widetilde \Phi}
\def\ind{{\rm ind}}
\def\Nm{{\rm Nm}}
\def\bx{{\mathbf x}}
\def\der{{\rm der}}
\def\ov{\overline}
\def\sc{{\rm sc}}
\def\unip{{\rm unip}}
\newtheorem{theorem}{Theorem}
\newtheorem{proposition}[theorem]{Proposition}
\newtheorem{lemma}[theorem]{Lemma}
\newtheorem{corollary}[theorem]{Corollary}
\theoremstyle{definition}
\newtheorem{remark}[theorem]{Remark}
\numberwithin{equation}{section}
\numberwithin{theorem}{section}
\renewcommand{\to}{%
   \ifbool{@display}{\longrightarrow}{\rightarrow}%
   }
\let\shortmapsto\mapsto
\renewcommand{\mapsto}{%
   \ifbool{@display}{\longmapsto}{\shortmapsto}%
   }
\newlength{\olen}
\newlength{\ulen}
\newlength{\xlen}
\newcommand{\xra}[2][]{%
   \ifbool{@display}%
      {\settowidth{\olen}{$\overset{#2}{\longrightarrow}$}%
       \settowidth{\ulen}{$\underset{#1}{\longrightarrow}$}%
       \settowidth{\xlen}{$\xrightarrow[#1]{#2}$}%
       \ifdimgreater{\olen}{\xlen}%
          {\underset{#1}{\overset{#2}{\longrightarrow}}}%
          {\ifdimgreater{\ulen}{\xlen}%
             {\underset{#1}{\overset{#2}{\longrightarrow}}}
             {\xrightarrow[#1]{#2}}}}%
      {\xrightarrow[#1]{#2}}
   }
\newcommand{\xyra}[2][]{%
   \settowidth{\xlen}{$\xrightarrow[#1]{#2}$}%
   \ifbool{@display}%
      {\settowidth{\olen}{$\overset{#2}{\longrightarrow}$}%
       \settowidth{\ulen}{$\underset{#1}{\longrightarrow}$}%
       \ifdimgreater{\olen}{\xlen}%
          {\mathrel{\xymatrix@M=.12ex@C=3.2ex{\ar[r]^-{#2}_-{#1} &}}}%
          {\ifdimgreater{\ulen}{\xlen}%
             {\mathrel{\xymatrix@M=.12ex@C=3.2ex{\ar[r]^-{#2}_-{#1} &}}}
             {\mathrel{\xymatrix@M=.12ex@C=\the\xlen{\ar[r]^-{#2}_-{#1} &}}}}}%
      {\mathrel{\xymatrix@M=.12ex@C=\the\xlen{\ar[r]^-{#2}_-{#1} &}}}%
   }
\newcommand{\xla}[2][]{%
   \ifbool{@display}%
      {\settowidth{\olen}{$\overset{#2}{\longleftarrow}$}%
       \settowidth{\ulen}{$\underset{#1}{\longleftarrow}$}%
       \settowidth{\xlen}{$\xleftarrow[#1]{#2}$}%
       \ifdimgreater{\olen}{\xlen}%
          {\underset{#1}{\overset{#2}{\longleftarrow}}}%
          {\ifdimgreater{\ulen}{\xlen}%
             {\underset{#1}{\overset{#2}{\longleftarrow}}}
             {\xleftarrow[#1]{#2}}}}%
      {\xleftarrow[#1]{#2}}
   }
\newcommand{\isoarrow}{%
   \ifbool{@display}{\overset{\sim}{\longrightarrow}}{\xrightarrow\sim}%
   }
\newcommand{\sm}{{\,\smallsetminus\,}}
\newcommand{\colim@}[2]{%
  \vtop{\m@th\ialign{##\cr
    \hfil$#1\operator@font lim$\hfil\cr
    \noalign{\nointerlineskip\kern1.5\ex@}#2\cr
    \noalign{\nointerlineskip\kern-\ex@}\cr}}%
}
\newcommand{\colim}{%
  \mathop{\mathpalette\colim@{\rightarrowfill@\textstyle}}\nmlimits@
}
\newcommand{\prolim@}[2]{%
  \vtop{\m@th\ialign{##\cr
    \hfil$#1\operator@font lim$\hfil\cr
    \noalign{\nointerlineskip\kern1.5\ex@}#2\cr
    \noalign{\nointerlineskip\kern-\ex@}\cr}}%
}
\newcommand{\prolim}{%
  \mathop{\mathpalette\colim@{\leftarrowfill@\textstyle}}\nmlimits@
}
\DeclareMathOperator{\Perf}{Perf}
\begin{document}

\title{Deep level Deligne--Lusztig induction for tamely ramified tori}

\author{Alexander B. Ivanov}
\address{Fakult\"at f\"ur Mathematik, Ruhr-Universit\"at Bochum, D-44780 Bochum, Germany.}
\email{a.ivanov@rub.de}

\author{Sian Nie}
\address{Academy of Mathematics and Systems Science, Chinese Academy of Sciences, Beijing 100190, China}

\address{School of Mathematical Sciences, University of Chinese Academy of Sciences, Chinese Academy of Sciences, Beijing 100049, China}
\email{niesian@amss.ac.cn}

\begin{abstract} 
Deep level Deligne--Lusztig representations, which are natural analogues of classical Deligne--Lusztig representations, recently play an important role in geometrization of irreducible supercuspidals of $p$-adic groups. In this paper, we propose a construction of deep level Deligne--Lusztig varieties/representations in the tamely ramified case, extending previous constructions in the unramified case. As an application, under a mild assumption on the residue field, we show that each regular irreducible supercuspidal is the compact induction of a deep level Deligne--Lusztig representation, and generally, each irreducible supercuspidal is a direct summand of the compact induction of the cohomology of a deep level Deligne--Lusztig variety.

\end{abstract}

\maketitle

\section{Introduction}

Classical Deligne--Lusztig theory \cite{DeligneL_76} constructs varieties over the finite field $\BF_q$ equipped with the action of the finite group of Lie type $G(\BF_q)$. The cohomology of these varieties allows a uniform parametrization of all irreducible representations of such groups. Initiated by Lusztig \cite{Lusztig_79}, Deligne--Lusztig constructions associated to a reductive group $G$ over a local non-Archimedean field $k$ have been intensively studied in the literature over the past decade. We refer for example to the recent works \cite{ChenS_17, Chan_siDL, CI_loopGLn, ChenS_23, Chan24, ChanO_21, Nie_24, IvanovNie_25, CO_25} (and references therein), where the cohomology of deep level Deligne--Lusztig varieties (equipped with the action of a parahoric subgroup of $G(k)$) is investigated, and to \cite{Ivanov_DL_indrep} and \cite[\S9]{CI_loopGLn}, where $p$-adic Deligne--Lusztig spaces equipped with the action of the whole $p$-adic group are constructed.

Any (classical or deep level or $p$-adic) Deligne--Lusztig construction is attached to a rational maximal torus $T\subseteq G$. Due to its nature, in the case of a local field $k$ the construction so far only produced good output for \emph{unramified} tori $T$.

In this article we propose a new Deligne--Lusztig type construction for \emph{tamely ramified} maximal tori $T$ of $G$, and prove that it realizes the expected correspondence between smooth Howe factorizable  characters of $T(k)$ and smooth representations of $G(k)$. More precisely, we show, under mild assumptions on the residue field of $k$, that all the irreducible supercuspidals constructed by J.-K. Yu and Kaletha \cite{Yu_01,Kaletha_19} can be realized through weight spaces of $T(k)$ in the cohomology of these varieties. We refer to \cite{Stasinski_11,Ivanov_15_ADLV_GL2_ram,Ivanov_18_wild} for previous attempts to produce a reasonable Deligne--Lusztig construction for ramified tori. It would be interesting to compare our construction with these existing approaches.

\subsection{The construction and main result} \label{subsec:constr} We explain our results in more detail now. Let $k$ be as above and let $\BF_q$ be the residue field of $k$. Denote by $\brk$ the completion of a maximal unramified extension of $k$, by $\ov\BF_q$ the residue field of $\brk$, and by $F$ the Frobenius automorphism of $\brk$ over $k$. Let $G$ be a reductive group over $k$. We will identify $G$ with $G(\brk)$ so that $G(k) = G^F$. 

Let $s \in \BR_{\ge 0}$. Let $G_s := G_{\bx, 0} / G_{\bx, s+}$ be the $s$th Moy--Prasad quotient of the parahoric subgroup $G_{\bx, 0} \subseteq G$. We regard $G_s$ as a linear algebraic group over $\BF_q$. Using the positive loop functor, one can associated to any closed subgroup $M \subseteq G$ a closed subgroup $M_s$ of $G_s$, see \S\ref{sec:positive_loops}. 

Let $T \subseteq G$ be an elliptic maximal torus over $k$, which splits over a tamely ramified extension. Fix $r \in \BR_{\geq 0}$ and a smooth character $\phi \colon T(k) \to \overline\BQ_\ell^\times$ of depth $\leq r$. \emph{Suppose that $p>2$ and $\phi$ admits a Howe factorization $(G^i, \phi_i)_{i=-1}^d$ consisting of twisted Levi subgroups \[T = G^{-1} \subseteq G^0 \subsetneq G^1 \subsetneq \cdots \subsetneq G^d = G\] and characters $\phi_i: G^i(k) \to \ov\BQ_\ell^\times$ in the sense of \cite[\S3.6]{Kaletha_19}}. Motivated by Yu's construction of irreducible supercuspidals, we require that \[\tag{$\ast$} \text{$T$ is a maximally unramified elliptic maximal torus of $L := G^0$.}\] We will see that such pairs $(T, \phi)$ are sufficient for our purpose to realize all tame irreducible supercuspidals of $G(k)$.

Following \cite[\S 3]{Yu_01} one can associate to $(T, \phi)$ a Yu-type subgroup \[\CK \subseteq G_{\bx, 0}\] whose natural image in $G_r$ is denoted by $\CK_r$. We construct a variety \[Z_{\phi, r} \subseteq G_r\] equipped with a natural $\CK_r^F \times T_r^F$-action. Let us give a few more details on its construction. First, there is tower of normal subgroups \[G_{\bx ,r+} \subseteq \CE \subseteq \CK^+ \subseteq \CH \subseteq \CK = \CH L_{\bx, 0},\] and the short exact sequence \[1 \to \CK^+ / \CE \to \CH / \CE \to \CH / \CK^+ \to 1\] gives a central extension of the symplectic-type space $\CH / \CK^+$. On the other hand, by ($\ast$) there exists a Borel subgroup $B = T U \subseteq L$ over $\brk$ with unipotent radical $U$. We show that there exists a subgroup \[\CE \subseteq \CL \subseteq \CH\] such that $\CL / \CE$ is a section of a Lagrangian space in $\CH / \CK^+$ both normalized by $B_r$. Then we put $\CI_r = U_r \CL_r = \CL_r U_r$ and define \[Z_{\phi, r} = \{g \in \CK_r; g\i F(g) \in F \CI_r\},\] which admits a natural action by $\CK_r^F \times  T_r^F$.
\begin{remark}
   The construction of $Z_{\phi, r}$ extends that in \cite{Nie_24} for the unramified case (see also \cite{ChenS_17, ChenS_23} for the generic case). A similar construction is first given in \cite{CKZ}. The idea of using Lagrangian subspace was communicated to the authors by Xinwen Zhu. 
\end{remark}

In practice, we put $\BK = \CK / \CE$, and write $\BT,\BL$ and $Z$ for the natural images of $T_r,L_r$ and $Z_{\phi, r}$ in $\BK$ respectively. The cohomology groups $H^i_c(Z,\ov\BQ_\ell)$ inherit natural actions by $\BK^F \times \BT^F$, and we denote by $H^i_c(Z,\ov\BQ_\ell)[\phi]$ their subspaces on which $\BT^F$ acts via $\phi$. Let
\[
\CR^{\BK}_{\BT}(\phi) = H^*_c(Z, \ov\BQ_\ell)[\phi] := \sum_i (-1)^i H^i_c(Z,\ov\BQ_\ell)[\phi],
\]
which is a virtual $\BK^F$-module, or a virtual $\CK^F$-module by inflation. Let $W_{L_r}(T_r) = \{g \in L_r \colon gT_rg^{-1} = T_r\}/T_r$. Note that $W_{L_r}(T_r) = W_{\BL}(\BT)$ is the Weyl group of $\BT$ in $\BL$. We will always denote this group by $W_{\BT}(\BL)$.

Our first main result is the following Mackey type formula of $\CR^{\BK}_{\BT}(\phi)$.
\begin{theorem} \label{main-1}
    We have \[\<\CR^{\BK}_{\BT}(\phi), \CR^{\BK}_{\BT}(\phi)\>_{\BK^F} = \sharp \{w \in W_{\BL}(\BT)^F; {}^w \phi|_{T_r^F} = \phi |_{T_r^F}\}.\]
\end{theorem}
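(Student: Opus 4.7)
The plan is to adapt the classical Deligne--Lusztig proof of the Mackey formula to the present deep-level tamely ramified setting, building on the unramified case treated in \cite{Nie_24}.

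First, I would reinterpret the inner product $\<\CR^{\BK}_{\BT}(\phi), \CR^{\BK}_{\BT}(\phi)\>_{\BK^F}$ as the alternating trace of Frobenius on the $(\phi^{-1}, \phi)$-isotypic component of $H^*_c(Z \times Z, \ov\BQ_\ell)^{\BK^F}$ for the two right $\BT^F$-actions. Writing $\BI \subseteq \BK$ for the image of $\CI_r$, Lang--Steinberg for $\BK$ identifies the geometric quotient $\BK^F\backslash(Z\times Z)$ with
\[
Y := \bigl\{ x \in \BK \bigm| x^{-1} F\BI \cdot x \cap F\BI \neq \emptyset \bigr\} \big/ (F\BI \times F\BI),
\]
where the two copies of $F\BI$ act by left, resp.\ right, translation and $\BT^F$ acts diagonally by conjugation. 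Thus the inner product equals the alternating trace of $F$ on $H^*_c(Y,\ov\BQ_\ell)_{(\phi^{-1},\phi)}$.

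Second, I would stratify $Y$ by the Bruhat-type decomposition of $\BK$ relative to $(\BT, \BI)$. Since $\BT$ is a maximally unramified elliptic maximal torus of $\BL$ by $(\ast)$, and $\CI_r = \CL_r U_r$ with $U$ the unipotent radical of a Borel of $L$ over $\brk$, the finite-type group $\BK$ admits a Bruhat-type decomposition indexed by $W_{\BL}(\BT)$, so the $F$-stable strata of $Y$ are indexed by the fixed-point set $W_{\BL}(\BT)^F$. The tameness of $T$ together with $(\ast)$ is essential here, ensuring that the relevant Weyl group is precisely $W_{\BL}(\BT)$ rather than a larger affine object, and that its $F$-action is the expected one.

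Third, for each $w \in W_{\BL}(\BT)^F$ I would compute the contribution of the corresponding stratum $Y_w$. After stripping off its affine-bundle structure, the cohomology of $Y_w$ reduces to a character sum over $T_r^F$. The $(\phi^{-1},\phi)$-isotypic component of this sum is one-dimensional when ${}^w\phi|_{T_r^F} = \phi|_{T_r^F}$ and vanishes otherwise. Summing over $w$ yields the claimed equality. The main obstacle, compared with the unramified case treated in \cite{Nie_24}, lies in this third step: the Lagrangian $\CL_r$ inside the symplectic-type quotient $\CH/\CK^+$ replaces the usual root-subgroup gymnastics. One must analyze how a lift $\dot w$ transports $\CL_r$, and verify via a Heisenberg/Gauss-sum type character identity that the discrepancy between $\CL_r$ and ${}^{\dot w}\CL_r$ integrates cleanly, so that the contribution of the stratum $Y_w$ depends only on $\phi|_{T_r^F}$ and on the condition ${}^w\phi|_{T_r^F} = \phi|_{T_r^F}$. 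The compatibility of the symplectic form on $\CH/\CK^+$ with both $F$ and $\dot w$-conjugation---which ultimately follows from the Howe factorization structure of \cite{Kaletha_19} together with the defining properties of $\CL$---is the key technical input needed to complete the argument.
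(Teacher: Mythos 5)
Your overall architecture---passing to the intertwining variety, stratifying by a Bruhat-type decomposition of $\BK$ relative to $(\BT,\BI)$, and computing stratum by stratum---matches the paper's proof of Theorem \ref{product}, which follows \cite[Proposition 5.5]{Nie_24}. (One small inaccuracy: the strata are indexed by all of $W_{\BL}(\BT)$, not just $W_{\BL}(\BT)^F$; the non-$F$-stable strata exist and one must \emph{prove} their contribution vanishes.) The real problem is that your third step, where the entire content of the theorem lives, has two genuine gaps. First, since $\BH\not\subseteq\BI$, the decomposition is $\BK=\bigsqcup_w \BI\,\hat V_{\Psi_-\cap{}^w\Psi_-}\,\dw\,\BT\,\BI$, so each stratum carries an extra Heisenberg coordinate $\bar v\in\hat V_{\Psi_-\cap{}^w\Psi_-}$. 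Your proposal never isolates the locus $\bar v\neq 0$. Its contribution does \emph{not} vanish for formal or affine-bundle reasons: in the paper one decomposes this locus by $\tau$-orbits $C$, constructs an explicit automorphism $f_s$ of each piece (for $s$ in a subgroup of $\BT^{r_C}$) acting trivially on cohomology, and uses that $\phi$ is nontrivial on the norm of $(\BK^+)_C$ \emph{because $\phi_{i_C-1}$ is $(G^{i_C-1},G^{i_C})$-generic}. Invoking "the Howe factorization structure" as a slogan does not supply this; without genericity the vanishing is false.

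Second, even on the locus $\bar v=0$, your claim that "after stripping off its affine-bundle structure, the cohomology of $Y_w$ reduces to a character sum over $T_r^F$" is precisely what fails in the tamely ramified case, and is the difficulty the paper singles out as new. After the torus-action fixed-point reduction one is left with the variety $X_{D,FD,w^{-1}}$ of \S\ref{subsec:D_i}, which (unlike in the unramified case) is neither finite nor an affine bundle over $\BT^F$. Its cohomology is determined by the Cartesian square of Lemma \ref{square}, which exhibits it as a pullback along the Lang map of $\BT$ of the commutator map $\delta$ of Lemma \ref{delta}, combined with Boyarchenko's vanishing result (Proposition \ref{iteration}) to concentrate the cohomology on the sublocus where the symplectic pairing degenerates; only that sublocus fibers over $\BT^F$ in the way you describe. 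Your appeal to a "Heisenberg/Gauss-sum type character identity" names the right phenomenon but assumes the conclusion of Proposition \ref{dim} rather than proving it. As written, the proposal is a restatement of the problem at the two points where the argument is actually hard.
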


To compute $\CR^{\BK}_{\BT}(\phi)$, we introduce the following $\BK^F$-module of Yu-type
\[
R^{\BK}_{\BT}(\phi) = \k \otimes R_{T_0}^{L_0}(\phi_{-1}),
\]
where \begin{itemize}
    \item $\k$ is the Weil--Heisenberg representation as in \cite[\S2.5]{Fin_21b} (see also \cite[\S4 and \S11]{Yu_01}) attached to the positive-depth part $(G^i, \phi_i)_{i=0}^d$ of the Howe factorization of $\phi$;

    \item $R_{T_0}^{L_0}(\phi_{-1})$ is the classical Deligne--Lusztig representation of $L_0^F$ attached to the depth-zero part $\phi_{-1}$ of $\phi$.
\end{itemize}

Our second result is the following.
\begin{theorem} \label{main-2}
Suppose $q$ is sufficiently large. Then \[ \CR^{\BK}_{\BT}(\phi) \cong (-1)^{d(\phi)} R^{\BK}_{\BT}(\phi^\dag),\] where $d(\phi)$ is an integer depending on $\phi$, $\phi^\dag = \phi \cdot \varepsilon_\phi$ and $\varepsilon_\phi = \prod_{i=1}^d \e_{\sharp, \bx}^{G^i/G^{i-1}}$ is a product of depth zero quadratic characters $\e_{\sharp, \bx}^{G^i/G^{i-1}}$ of the maximal bounded subgroup of $T^F$ as in \cite[Definition 3.1]{FKS}.
\end{theorem}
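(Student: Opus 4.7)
The plan is to decompose $\CR^{\BK}_{\BT}(\phi)$ by stratifying $Z_{\phi,r}$ according to the tower $\CE \subseteq \CL \subseteq \CK^+ \subseteq \CH \subseteq \CK$, matching the contributions step by step against the two factors of $R^{\BK}_{\BT}(\phi^\dag) = \k \otimes R_{T_0}^{L_0}(\phi_{-1})$ coming from the Howe factorization $T = G^{-1} \subseteq L = G^0 \subsetneq G^1 \subsetneq \cdots \subsetneq G^d = G$. Since Theorem \ref{main-1} already controls the inner product $\<\CR^{\BK}_{\BT}(\phi),\CR^{\BK}_{\BT}(\phi)\>_{\BK^F}$, and the corresponding inner product of $R^{\BK}_{\BT}(\phi^\dag)$ is accessible by combining the classical Deligne--Lusztig orthogonality for $R_{T_0}^{L_0}(\phi_{-1})$ with the known properties of $\k$, it will suffice to construct an explicit nonzero intertwiner and then upgrade it via a multiplicity/dimension count to an isomorphism of virtual $\BK^F$-modules.

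First I would isolate the depth-zero layer. The surjection $\CK \twoheadrightarrow L_0 := L_{\bx,0}/L_{\bx,0+}$ exhibits $Z_{\phi,r}$ as fibered over the classical Deligne--Lusztig variety $X_{L_0} = \{g \in L_0 : g^{-1} F(g) \in F(U_0)\}$ for the $F$-stable maximal torus $T_0 \subseteq L_0$ with respect to the Borel $B_0 = T_0 U_0$ induced by the one in \S\ref{subsec:constr}. The $\CK^F \times T_r^F$-action on $Z_{\phi,r}$ factors on the base through $L_0^F \times T_0^F$, and extracting the $\phi_{-1}$-isotypic component of $H_c^*(X_{L_0},\ov\BQ_\ell)$ produces exactly the factor $R_{T_0}^{L_0}(\phi_{-1})$ of $R^{\BK}_{\BT}(\phi^\dag)$.

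Second, the fibers of this projection encode the positive-depth information. By construction, $\CH/\CK^+$ carries a symplectic-type form whose associated Heisenberg central extension is $1 \to \CK^+/\CE \to \CH/\CE \to \CH/\CK^+ \to 1$, and $\CL/\CE$ is a Lagrangian section therein. A Lang--Steinberg argument on each fiber, combined with Fourier analysis on the abelian unipotent quotients indexed by the breaks $0 < r_1 < \cdots < r_d$ of the Howe filtration, should identify the $\phi$-isotypic part of the fiber cohomology with the Weil--Heisenberg representation $\k$ of \cite[\S2.5]{Fin_21b}, tensor-factored over $i$ in accordance with the Howe decomposition of $\phi$. The main obstacle will sit here: one must verify that the symplectic pairing produced by our geometric model on $\CH/\CK^+$ agrees with the one used by Yu and by Fintzen--Kaletha--Spice to define $\k$, that the Lagrangian $\CL/\CE$ gives the same polarization as theirs (so that the Weil representation obtained is independent of the choice up to the expected isomorphism), and that the resulting Heisenberg representation is irreducible and splits as the external tensor product of the depth-$r_i$ pieces. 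The hypothesis that $q$ is sufficiently large enters here to guarantee that the relevant Fourier transforms are nondegenerate and that the Deligne--Lusztig step is regular.

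Third, the sign $(-1)^{d(\phi)}$ is recovered from a dimension count: $d(\phi)$ is an alternating sum over the positive-depth strata of the dimensions of the vector-group fibers (in particular of $\dim \CL/\CE$ plus the relevant Deligne--Lusztig dimensions), and drops out of the Euler characteristic computation once the stratification is in place. The twist $\phi^\dag = \phi \cdot \varepsilon_\phi$ arises because the character through which $T_r^F$ acts on the geometrically produced Weil--Heisenberg representation differs from $\prod_{i \geq 0} \phi_i$ by the quadratic character $\varepsilon_\phi = \prod_{i=1}^d \varepsilon_{\sharp,\bx}^{G^i/G^{i-1}}$ of \cite[Definition 3.1]{FKS}; this discrepancy is a direct calculation once the identification of step two is in hand, and is exactly the Fintzen--Kaletha--Spice correction between Yu's and Kaletha's normalizations. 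Combining the intertwiner produced in steps one and two with the upper bound from Theorem \ref{main-1} and the matching exact multiplicity on the $R^{\BK}_{\BT}(\phi^\dag)$-side then forces the virtual isomorphism claimed in the theorem.
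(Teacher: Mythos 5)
Your plan --- fiber $Z$ over the classical Deligne--Lusztig variety of $L_0$, identify the $\phi_+$-isotypic fiber cohomology with the Weil--Heisenberg representation $\k$, and then upgrade a nonzero intertwiner to an isomorphism using \Cref{main-1} --- is not the route the paper takes, and as written it has a genuine gap at exactly the point you yourself flag as ``the main obstacle.'' Identifying the fiber cohomology with $\k$ \emph{as a $\BK^F$-module} (not merely as an $\BH^F$-module) is essentially equivalent to the theorem: Stone--von Neumann forces the restriction to $\BH^F$, but the extension to $\BK^F$ --- equivalently, the appearance of the quadratic twist $\varepsilon_\phi$ --- is precisely the delicate normalization issue of Fintzen--Kaletha--Spice, and it is not ``a direct calculation once the identification of step two is in hand''; no Lang--Steinberg or Fourier argument you describe actually pins it down. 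In addition, in the non-regular case \Cref{main-1} gives an inner product larger than $1$, so a nonzero intertwiner plus matching norms does not force an isomorphism of virtual modules. Finally, the hypothesis that $q$ is large is not used to make ``Fourier transforms nondegenerate.''

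The paper's actual proof is character-theoretic throughout, following \cite{CO_25}. It establishes a Deligne--Lusztig-type character formula for $\hat\CR_{\BT}^{\BK}(\phi)$ (\Cref{Jordan}, \Cref{CK-trace}) via fixed points of semisimple parts together with the concentration theorem \Cref{concentration}, and a parallel formula for $\k\otimes \hat R_{T_0}^{L_0}(\phi_{-1}^\dag)$ (\Cref{K-trace}) using the known trace formula for $\k$ (\Cref{trace-kappa}); the latter is where $\varepsilon_\phi$ enters, as input rather than output. The comparison then reduces to the Green function identity $\CQ_{t,\phi_+} = (-1)^{d(\phi,t)}Q_{t,\phi_+}$ (\Cref{regular-case}). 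Since Green functions depend only on $\phi_+$ (\Cref{independent}) and $q$ is large, one may replace $\phi$ by a regular character with the same positive-depth part; then $\pm\CR_{\BT}^{\BK}(\phi) \cong \k\otimes\rho$ for an irreducible $\rho$ (\Cref{tensor}), the values on regular semisimple elements of $\BT_\red^F$ (\Cref{CR-vreg} versus \Cref{K-trace}) show that $\rho$ and $\pm R_{T_0}^{L_0}(\phi_{-1}^\dag)$ agree on semisimple elements, and \cite[Theorem 3.2]{CO_25} --- this is where $q$ large is genuinely needed --- forces $\rho \cong \pm R_{T_0}^{L_0}(\phi_{-1}^\dag)$. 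To salvage your approach you would need an independent geometric computation of the full $\BK^F$-action on the fiber cohomology, including the FKS twist, which is exactly what is not available.
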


\begin{remark}
    In the unramified case, \Cref{main-1} is proved in \cite[Proposition 5.5]{Nie_24}, and \Cref{main-2} is proved in \cite{LN_25}.
\end{remark}

\subsection{The application} 
Now we discuss applications on irreducible supercuspidals of $G(k)$.

Let $(T, \phi)$ be a tame elliptic regular pair in the sense of \cite[Definition 3.7.5]{Kaletha_19}. In particular, our assumption ($\ast$) is satisfied. Following \cite[\S 3.7]{Kaletha_19}, one can associate to $(T, \phi)$ an irreducible supercuspidal $\pi_{(T, \phi)}$ of $G(k)$, which is referred to as a regular irreducible supercuspidal. 

On the other hand, by generalizing the geometric construction of $\pi_{(T, \phi)}$ in the depth zero case (see \cite[\S3.4.3]{Kaletha_19}), we can extend the $\CK^F$-modules $H_c^i(Z, \ov\BQ_\ell)[\phi]$ to $\CK^F T^F$-modules. Thus the alternating sum \[\hat\CR_{\BT}^{\BK}(\phi) := H_c^*(Z, \ov\BQ_\ell)[\phi]\] is a virtual $\CK^F T^F$-module. 

Our first application is the following cohomoligical realization of regular irreducible supercuspidals.
\begin{theorem} \label{app-1}
    Assume $q$ is sufficiently large and $p$ does not divide the order of the Weyl group of $G$. Let $(T, \phi)$ be a tame elliptic regular pair. Then \[\pi_{(T, \phi)} \cong (-1)^{d(\phi) + r_G -r_T} \text{c-}\ind_{\CK^F T^F}^{G^F} \hat \CR_{\BT}^{\BK}(\phi^\dag),\] where $r_G$ and $r_T$ are the splitting ranks of $G$ and $T$ respectively.
\end{theorem}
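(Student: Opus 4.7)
The plan is to deduce \Cref{app-1} by combining \Cref{main-2}, applied with $\phi$ replaced by $\phi^\dag$, with the Yu/Kaletha construction of the regular supercuspidal $\pi_{(T,\phi)}$. Since $\varepsilon_\phi$ is quadratic and depends only on the twisted-Levi sequence of the Howe factorization, one has $\varepsilon_{\phi^\dag}=\varepsilon_\phi$, hence $(\phi^\dag)^\dag=\phi$; moreover $d(\phi^\dag)=d(\phi)$ because $d$ depends only on the twisted-Levi/Moy--Prasad data. Applying \Cref{main-2} to $\phi^\dag$ therefore gives a $\CK^F$-equivariant isomorphism
\[
\CR^{\BK}_{\BT}(\phi^\dag) \;\cong\; (-1)^{d(\phi)}\bigl(\k \otimes R_{T_0}^{L_0}(\phi_{-1})\bigr) \;=\; (-1)^{d(\phi)} R^{\BK}_{\BT}(\phi).
\]
The task then reduces to: (i) upgrading this to a $\CK^F T^F$-equivariant isomorphism $\hat\CR^{\BK}_{\BT}(\phi^\dag)\cong(-1)^{d(\phi)}\hat R^{\BK}_{\BT}(\phi)$, where $\hat R^{\BK}_{\BT}(\phi)$ is a natural $\CK^F T^F$-extension of $R^{\BK}_{\BT}(\phi)$; (ii) identifying $(-1)^{r_G-r_T}\hat R^{\BK}_{\BT}(\phi)$ with Kaletha's inducing datum $\rho$ for $\pi_{(T,\phi)}$; and (iii) invoking $\pi_{(T,\phi)}=\text{c-Ind}_{\CK^F T^F}^{G^F}\rho$ from \cite[\S3.7]{Kaletha_19}.

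For step (i), both sides carry parallel $T^F$-extensions. Geometrically, $T(k)$ normalizes $\CK$ and preserves the defining condition $g^{-1}F(g)\in F\CI_r$ of $Z$, so $T^F$ acts on $Z$ by left translation; this extends the $T_r^F$-action on $H^*_c(Z,\ov\BQ_\ell)[\phi^\dag]$ to the $\CK^F T^F$-action producing $\hat\CR^{\BK}_{\BT}(\phi^\dag)$, mimicking the depth-zero recipe of \cite[\S3.4.3]{Kaletha_19}. Algebraically, the Heisenberg--Weil representation $\k$ carries a canonical $T(k)$-action from Yu's construction \cite[\S11]{Yu_01}, and $R_{T_0}^{L_0}(\phi_{-1})$ admits a standard $L_0^F T^F$-extension; their tensor product defines $\hat R^{\BK}_{\BT}(\phi)$. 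The $T^F$-equivariance of the isomorphism furnished by \Cref{main-2} should then follow from tracking $T$-equivariance through its proof, since the intermediate identifications are built from $T$-invariant geometric data. For step (ii), the Fintzen--Kaletha--Spice correction \cite[Definition 3.1]{FKS} to Yu's construction states precisely that Kaletha's inducing representation $\rho$ on $\CK^F T^F$ equals $(-1)^{r_G-r_T}$ times the tensor product of $\k$ with the depth-zero extension of $R_{T_0}^{L_0}(\phi_{-1})$, once the inducing character is twisted by $\varepsilon_\phi$; since the dagger operation $\phi\mapsto\phi^\dag=\phi\cdot\varepsilon_\phi$ is exactly this twist, replacing $\phi$ by $\phi^\dag$ on the left-hand side absorbs the correction and yields the desired match.

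The main obstacle will be step (i), namely the $\CK^F T^F$-equivariant refinement of \Cref{main-2}. The theorem is formulated only with $\CK^F$-equivariance, so one must reopen its proof and verify that the geometric $T^F$-action on $H^*_c(Z,\ov\BQ_\ell)[\phi^\dag]$ agrees, under each intermediate identification between the cohomology of $Z$, the Weil--Heisenberg module $\k$, and the depth-zero Deligne--Lusztig module $R_{T_0}^{L_0}(\phi_{-1})$, with the algebraic $T^F$-action dictated by Yu's and Kaletha's normalizations. A secondary task is the sign bookkeeping: combining $(-1)^{d(\phi)}$ from \Cref{main-2} with the FKS sign $(-1)^{r_G-r_T}$ from step (ii), together with the verifications $d(\phi^\dag)=d(\phi)$ and $\varepsilon_{\phi^\dag}=\varepsilon_\phi$ recorded at the start.
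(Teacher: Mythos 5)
Your architecture is the same as the paper's: apply the comparison theorem with $\phi$ replaced by $\phi^\dag$ (using $(\phi^\dag)^\dag=\phi$ and $d(\phi^\dag)=d(\phi)$), identify the FKS-corrected inducing datum $(-1)^{r_G-r_T}\hat R_{\BT}^{\BK}(\phi)=(-1)^{r_G-r_T}\,\k\otimes\hat R_{T_0}^{L_0}(\phi_{-1})$ with Kaletha's construction (the paper cites \cite[Lemma 3.10]{ChanO_21} for exactly your step (ii)--(iii)), and assemble the signs. That part is fine.

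The one substantive divergence is your step (i), and your proposed mechanism for it would not work as described. You suggest ``tracking $T$-equivariance through the proof'' of \Cref{main-2} via ``intermediate identifications'' between $H^*_c(Z)$, $\k$, and $R_{T_0}^{L_0}(\phi_{-1})$. But the proof of \Cref{main-2} (i.e.\ \Cref{thm:final_comparison}) constructs no such chain of isomorphisms: it is a purely character-theoretic argument (Green-function expansions, irreducibility from the Mackey formula, and comparison at enough regular semisimple elements via \cite{CO_25}), so there is nothing to ``track.'' The paper's actual resolution is to prove the extended identity $\hat\CR_{\BT}^{\BK}(\phi)=(-1)^{d(\phi)}\hat R_{\BT}^{\BK}(\phi^\dag)$ directly as the second statement of \Cref{thm:final_comparison}, by establishing the character formulas (\Cref{Jordan}, \Cref{K-trace}, \Cref{CK-trace}) on the whole extended group $\BK^F\bar T^F$ rather than just on $\BK^F$; since virtual modules of $\CK_\L^F T^F$ are determined by their characters, equality of characters on the extended group yields the equivariant refinement with no need to compare actions under intermediate maps. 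This is also precisely where the hypothesis that $p$ does not divide $|W_{\BL}(\BT)^F|$ enters (it is needed in \Cref{Jordan} for elements of $\BK^F\bar T^F\sm\BK^F$), a point your proposal does not account for. So: right plan, but step (i) should be repaired by redoing the character comparison on $\CK_\L^F T^F$ rather than by an equivariance-tracking argument.
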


For general irreducible supercuspidals we have the following exhaustion result.
\begin{proposition} \label{app-2}
    Assume $q$ is sufficiently large. For each tame irreducible supercuspidal $\pi$ of $G(k)$, there exits some pair $(T, \phi)$ as in \S\ref{subsec:constr} and $i \in \BZ_{\ge 0}$ such that $\pi$ is direct summand of the compact induction \[\text{c-}\ind_{Z(G)^F \CK^F}^{G^F} H_c^i(Z, \ov\BQ_\ell)[\phi].\] Here we extend the $\CK^F$-module $H_c^i(Z, \ov\BQ_\ell)[\phi]$ to a $Z(G)^F \CK^F$-module on which $Z(G)^F$ acts via $\phi$.
\end{proposition}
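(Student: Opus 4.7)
The plan is to combine the Yu--Kaletha construction (and its exhaustion) of tame supercuspidals with \Cref{main-2} in order to realize the Yu-type $\CK^F$-module inside the cohomology of $Z_{\phi, r}$, using the flexibility that we only need $\pi$ to occur as a \emph{summand}. First, I would invoke the exhaustion theorem for tame supercuspidals due to Yu, Kim and Fintzen: any tame irreducible supercuspidal $\pi$ of $G(k)$ is of the form $\pi \cong \text{c-}\ind_{Z(G)^F \CK^F}^{G^F} \tilde\rho$, where $\CK$ is a Yu-type subgroup attached to a twisted Levi sequence $T \subseteq L = G^0 \subsetneq \cdots \subsetneq G^d = G$ and positive-depth characters $(\phi_i)_{i=0}^d$, and $\tilde\rho$ is an irreducible representation of $Z(G)^F \CK^F$ whose restriction to $\CK^F$ is isomorphic to $\k \otimes \Infl_{L_0^F}^{\CK^F}(\sigma)$, with $\k$ the Weil--Heisenberg representation attached to $(G^i, \phi_i)_{i=0}^d$ and $\sigma$ an irreducible cuspidal representation of the finite reductive group $L_0^F$.

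Next, I would produce a pair $(T, \phi)$ fitting into the setup of \S\ref{subsec:constr} whose associated Deligne--Lusztig representation captures $\tilde\rho$. By classical Deligne--Lusztig theory every irreducible cuspidal $\sigma$ of $L_0^F$ occurs as a direct summand of some $R^{L_0}_{T_0}(\theta)$, for a depth-zero pair $(T_0, \theta)$ with $T_0$ elliptic in $L_0$. I would lift $T_0$ to a maximally unramified elliptic maximal torus $T$ of $L$ (so $(\ast)$ holds), and choose a depth-zero character $\phi_{-1}$ of $T(k)$ whose restriction to $T_0^F$ equals $\theta \cdot (\varepsilon_\phi|_{T_0^F})^{-1}$, so that after the quadratic twist $\phi \mapsto \phi^\dag$ the depth-zero part becomes exactly $\theta$. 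Assembled with the positive-depth $(\phi_i)_{i=0}^d$ from the Yu datum, this gives a smooth character $\phi$ of $T(k)$ with Howe factorization $(G^i, \phi_i)_{i=-1}^d$, satisfying all hypotheses of \S\ref{subsec:constr}. Applying \Cref{main-2} then yields
\[
\CR_{\BT}^{\BK}(\phi) \cong (-1)^{d(\phi)} R_{\BT}^{\BK}(\phi^\dag) = (-1)^{d(\phi)} \k \otimes R_{T_0}^{L_0}(\theta),
\]
so $\rho := \k \otimes \sigma$ occurs with nonzero (signed) multiplicity in the virtual $\CK^F$-module $\sum_i (-1)^i H_c^i(Z, \ov\BQ_\ell)[\phi]$; hence some genuine cohomology group $H_c^i(Z, \ov\BQ_\ell)[\phi]$ contains $\rho$ as a direct summand.

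Finally, since $Z(G) \subseteq T$, extending $H_c^i(Z, \ov\BQ_\ell)[\phi]$ to a $Z(G)^F \CK^F$-module on which $Z(G)^F$ acts via $\phi$ matches the central character of $\tilde\rho$ (which in Yu's recipe is given by $\phi|_{Z(G)^F}$), and the extension of $\rho$ in this module is isomorphic to $\tilde\rho$. Compact induction of a direct-sum decomposition of $H_c^i(Z, \ov\BQ_\ell)[\phi]$ containing $\rho$ then realizes $\pi = \text{c-}\ind \tilde\rho$ as a direct summand of $\text{c-}\ind_{Z(G)^F \CK^F}^{G^F} H_c^i(Z, \ov\BQ_\ell)[\phi]$. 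The main obstacle will be the middle step: aligning the auxiliary choice of $(T, \phi_{-1})$ with the depth-zero cuspidal $\sigma$ so that $\sigma$ truly appears in $R_{T_0}^{L_0}(\phi_{-1}^\dag)$ after absorbing the quadratic twist $\varepsilon_\phi$, and checking that all regularity/Howe-factorizability conditions of \S\ref{subsec:constr} are preserved. A secondary issue is the compatibility of the central character extension, which however should follow automatically from $\varepsilon_\phi$ being a depth-zero character trivial on a subgroup of finite index in $Z(G)^F$.
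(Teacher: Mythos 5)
Your proposal follows essentially the same route as the paper: invoke the Yu--Kim--Fintzen exhaustion, extract a depth-zero cuspidal $\sigma$ of $L_0^F$ from the datum, realize it inside a classical Deligne--Lusztig representation $R_{T_0}^{L_0}(\theta)$, lift $T_0$ to a maximally unramified elliptic maximal torus $T$ of $L$, assemble $\phi$ from $\theta$ and the positive-depth characters, and conclude via \Cref{thm:final_comparison} and compact induction (the paper works in the $\phi^\dag$-weight space rather than pre-twisting $\phi_{-1}$ by $\varepsilon_\phi^{-1}$, which is an equivalent maneuver). The only detail you take for granted that the paper actually proves is the existence of a depth-zero character $\phi_{-1}$ of $T(k)$ extending both $\theta$ and the central character $\omega$ of $\widetilde\rho$, which rests on the finiteness of $T^F/(Z(G)^F T_{\bx,0}^F)$ (\Cref{lm:quot_torus_finite}).
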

\begin{remark}
    In the unramified case, \Cref{app-1} and \Cref{app-2} are proved in \cite{CO_25} and \cite{Nie_24, LN_25} independently.
\end{remark}

Note that by exhaustion results of Kim and Fintzen \cite{Kim, Fin_21a}, any irreducible supercuspidal $G(k)$-representation is tame when $p$ does not divide the order of the absolute Weyl group of $G$.

\subsection{The strategy}
Now we discuss the ideas for the proofs of the main results, which combine the methods/strategies from \cite{Nie_24}, \cite{IvanovNie_24} and \cite{CO_25}. Let us explain in more details. 

To establish the Mackey type formula in \Cref{main-2}, we follow the proof of \cite[Proposition 5.5]{Nie_24} by reducing the problem to the computation of the cohomology group $H_c^*(X^H, \ov\BQ_\ell)$, where $H \subseteq T_r \times T_r$ is a torus and $X$ is a certain $H$-invariant subset of the intertwining variety. In the unramified case, the fixed-point set $X^H$ is a finite set and the computation follows trivially. While in the tamely ramified case, $X^H$ is no longer finite in general. To overcome this difficulty, we provide a detailed study of $X^H$ and manage to determine its cohomology based on an idea from \cite{IvanovNie_24}.

To prove \Cref{main-2}, we follow the strategy of \cite{CO_25}. It consists of two steps. First we show that $\CR^{\BK}_{\BT}(\phi)$ and $(-1)^{d(\phi)} R^{\BK}_{\BT}(\phi^\dag)$ have the same expansion in terms of their associated Green functions respectively. This step follows in a similar way as in \cite{CO_25}. The remaining task is to show the compatibility between these two type of Green functions. As observed in \cite{CO_25}, by using \Cref{main-1} and the largeness assumption on $q$, it suffices to show that
\[\tag{$\flat$} \text{ $\CR^{\BK}_{\BT}(\phi) = \pm \k \otimes R_{T_0}^{L_0}(\phi_{-1}^\dag)$ if $\pm \CR^{\BK}_{\BT}(\phi)$ is irreducible.}\] In the unramified case, the proof of ($\flat$) is  based on comparing the values of sufficiently many regular elements for $G_r$ on both sides. However, in our case, the set of regular elements is no longer large enough in general. To handle this difficulty, we first note that $\CR^{\BK}_{\BT}(\phi) = \pm \k \otimes \rho$ for some irreducible $L_0^F$-module $\rho$. So it suffices to show $\rho \cong \pm R_{T_0}^{L_0}(\phi_{-1}^\dag)$. To this end, we establish a trace formula of $\CR^{\BK}_{\BT}(\phi)$ on the set $\BK_{ss}^F$ of semi-simple elements of $\BK^F$. This is built on a concentration theorem of the cohomology of the positive-depth part of $Z$ (see \Cref{concentration}), which extends \cite[Proposition 6.2]{Nie_24} in the unramified case. Then by comparing it with the trace formula of $\k \otimes \rho$, we deduce that $\rho$ and $\pm R_{T_0}^{L_0}(\phi_{-1}^\dag)$ coincide on $\BK_{ss}^F$. As $q$ is large enough, $\BK_{ss}^F$ has sufficiently many regular elements for $L_0$. Thanks to \cite[Theorem 1.2]{CO_25}, the two irreducible characters $\rho $ and $\pm R_{T_0}^{L_0}(\phi_{-1}^\dag)$ coincide with each other as desired.

\subsection{The outline}
The paper is organized as follows. In \S\ref{sec:positive_loops} we lay down the basic set-up, and introduce subgroups attached to a Howe factorization in Yu's framework. In \S\ref{sec:Heisenberg} we provide the construction of deep level Deligne--Lusztig varieties/representations in tamely ramified case. In \S\ref{sec:Mackey} we establish the Mackey formula for the inner product of deep level Deligne--Lusztig representations. In \S\ref{sec:concentration}, we extend a concentration theorem from \cite{Nie_24} on cohomology groups to the tamely ramified case, which play an essential role in the proof of our main results. In \S\ref{sec:character} we  prove a character formula for $\CR^{\BK}_{\BT}(\phi)$ by following the approach of \cite{DeligneL_76} and \cite{CO_25}. In \S\ref{sec:R-mod} we introduce and study another representation $\CR^{\BK}_{\BT}(\phi^\dag)$ as a counterpart of $\CR^{\BK}_{\BT}(\phi)$. In \S\ref{sec:Green} we prove \Cref{main-2} and \Cref{app-1} by following a strategy of \cite{CO_25} on Green functions of $\CR^{\BK}_{\BT}(\phi)$ and $R^{\BK}_{\BT}(\phi^\dag)$. In the last section, we proved the exhaustion result \Cref{app-2} on irreducible supercuspidals.

\subsection*{Acknowledgments.} We are grateful to Xinwen Zhu for sharing the idea of Lagrangian-based  construction. We are also appreciative of Tasho Kaletha for his kindly answering several technical questions. The first named author gratefully acknowledges the support of the German Research Foundation (DFG) via the Heisenberg program (grant nr. 462505253).

\section{Preliminaries}
We denote by $k$ a local non-Archimedean field with residue field $\BF_q$ of characteristic $p$. We write $\brk$ for the completion of a maximal unramified extension of $k$, and $F$ for the Frobenius automorphism of $\brk$ over $k$. We denote by $\CO_k$ and $\COk$ the integer rings of $k$ and $\brk$ respectively. Let $\varpi$ be a fixed uniformizer of $k$.

\subsection{Positive loops}\label{sec:positive_loops}
Let $E/\breve k$ be a finite tamely ramified extension. It is automatically Galois with cyclic Galois group $\Gal(E/\breve k) = \langle \tau \rangle$. Let $\CO_E$ denote the integers of $E$. Note that $E$ and $\breve k$ have the same residue field $\ov\BF_q$. Let $\Perf_{\ov\BF_q}$ denote the category of perfect $\ov\BF_q$-schemes. Let $\fkX$ be an $\CO_E$-scheme. The positive loop functor (also called jet scheme) of $\fkX$ is defined by
\[
L_E^+ \fkX \colon \Perf_{\ov\BF_q}^{\rm op} \to {\rm Sets}, \quad R \mapsto \fkX(W_{\CO_E}(R)),
\]
where $W_{\CO_E}(R) = \CO_E \otimes_{\ov\BF_q} R$ if ${\rm char}\, E>0$ and $W_{\CO_E}(R) = W(R) \otimes_{\breve \BZ_p} \CO_E$ otherwise (here, $W(R)$ denote the $p$-typical Witt-vectors of $R$). We only consider schemes $\fkX$ which are affine and of finite type over $\CO_E$, in which case $L_E^+\fkX$ is representable by a affine scheme perfectly of finite presentation over $\ov\BF_q$.

When $E = \breve k$, we also write $L^+\fkX$ for $L_E^+\fkX$.

\subsection{Moy--Prasad quotients}\label{sec:setup}
Let $G$ be a reductive group over $k$, which splits over a tamely ramified extension of $k$.
Let $\bx$ be a point in the Bruhat-Tits building of $G$ over $k$. We denote by $\CG_{\bx, 0}$ the associated parahoric $\CO_k$-group model of $G$. For $0 \le r \in \widetilde \BR := \BR \sqcup \{s+; s \in \BR\}$ let $\CG_{\bx, r}$ be the $r$th Moy--Prasad subgroup of $\CG_{\bx, 0}$.
For any $0 \le s \le r \in \widetilde \BR$, $L^+(\CG_{\bx,r+} \times_{\CO_k} \CO_{\breve k}) \subseteq L^+(\CG_{\bx,s} \times_{\CO_k} \CO_{\breve k})$ is a subgroup. We put
\[G_{s:r} = L^+(\CG_{\bx,s} \times_{\CO_k} \CO_{\breve k}) / L^+(\CG_{\bx,r+} \times_{\CO_k} \CO_{\breve k}),\]
which is a pfp perfectly smooth affine group scheme over $\ov\BF_q$.
Note that as $F$ fixes $\bx$ it acts naturally on $G_{s:r}$, equipping it with an $\BF_q$-rational structure. Moreover, as $L^+\CG_{\bx,r+}$ is connected, $H^1(\breve k / k,L^+\CG_{\bx,r+}) = 0$ by Lang's theorem, and hence $G_{s:r}^F = \CG_{\bx,s}(\CO_k)/\CG_{\bx,r+}(\CO_k)$.

More generally, for any $E/\breve k$ as in \S\ref{sec:positive_loops} and $0 \le s \le r \in \widetilde \BR$, we also may form the pfp perfectly smooth affine group scheme
\[ {}^E G_{s:r} = L^+_E(\CG_{\bx,s} \times_{\CO_k} \CO_E) / L^+_E(\CG_{\bx,r+} \times_{\CO_k} \CO_E),\]
over $\ov\BF_q$. It is equipped with a natural action of  $\Gal(E/\breve k) = \langle \tau \rangle$. The (pro-version of) Lang--Steinberg theorem does not apply as the $\tau$-fixed points are not profinite, but we still have the following.

\begin{lemma}[see page 32 of \cite{PappasR_08}] \label{lm:galois_coho_vanishes}
We have $H^1(\Gal(E/\breve k), L^+_E\CG_{\bx,r+}) = 0$. In particular, $({}^E G_{s:r})^\tau = G_{s:r}$ (as perfect $\ov\BF_q$-group schemes).
\end{lemma}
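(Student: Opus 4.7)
The plan is to reduce the vanishing of $H^1$ to vanishing on the graded pieces of the Moy--Prasad filtration of $\CG_{\bx,r+}$, exploiting that by tameness the order $e := |\tau|$ is coprime to $p = \charac(\ov\BF_q)$. First, I would filter $L^+_E \CG_{\bx,r+}$ by the $\tau$-stable descending chain of Moy--Prasad subgroups $L^+_E \CG_{\bx,s+}$ for $s \ge r$. Using the commutator bound $[\CG_{\bx,a},\CG_{\bx,b}] \subseteq \CG_{\bx,a+b}$, any subquotient $L^+_E \CG_{\bx,s}/L^+_E \CG_{\bx,s'+}$ with $s < s' < 2s$ lies in the center of $L^+_E \CG_{\bx,r+}/L^+_E \CG_{\bx,s'+}$; and the successive graded piece $L^+_E \CG_{\bx,s}/L^+_E \CG_{\bx,s+}$ is, via the Moy--Prasad isomorphism, a pfp commutative vector group $V_s$ over $\ov\BF_q$ carrying a linear $\tau$-action.

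Since $V_s(R)$ is $p$-primary for any perfect $\ov\BF_q$-algebra $R$ and $e$ is a unit in $R$, the averaging projector $\frac{1}{e}\sum_{i=0}^{e-1} \tau^i$ yields $H^i(\langle\tau\rangle, V_s(R)) = 0$ for all $i \ge 1$ (via restriction--corestriction combined with invertibility of $e$). The same averaging trick, applied to Witt rings, gives $W_{\CO_E}(R)^\tau = W_{\CO_{\breve k}}(R)$---tameness ensures $\CO_E$ is a free $\CO_{\breve k}$-module admitting a $\tau$-eigenbasis---whence $(L^+_E \fkX)^\tau = L^+ \fkX$ for any $\CO_{\breve k}$-scheme $\fkX$ base-changed to $\CO_E$. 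I would then propagate the vanishing along the filtration by a dévissage for non-abelian $H^1$: for a central extension $1 \to N \to G \to Q \to 1$ of pfp group schemes with $\tau$-action and $H^i(\langle\tau\rangle, N) = 0$ for $i=1,2$, the pointed map $H^1(\langle\tau\rangle,G) \to H^1(\langle\tau\rangle,Q)$ is bijective (using twisting to compare fibers), so vanishing on both ends gives vanishing in the middle. Inducting along the filtration and then passing to the inverse limit (the transition maps are surjective, so Mittag--Leffler applies) yields the first assertion.

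For the second assertion, the long exact sequence of pointed sets attached to $1 \to L^+_E \CG_{\bx,r+} \to L^+_E \CG_{\bx,s} \to {}^E G_{s:r} \to 1$ combined with the identification $(L^+_E \CG_{\bx,\cdot})^\tau = L^+ \CG_{\bx,\cdot}$ from the previous paragraph gives $({}^E G_{s:r})^\tau = G_{s:r}$ on $R$-points functorially, and hence as perfect group schemes over $\ov\BF_q$. The main obstacle will be organizing the dévissage: the filtration is not globally central, so one must interleave the commutator bound---to secure centrality of each successive step inside a sufficiently truncated quotient---with the induction, while carrying along the twisting needed in the non-abelian setting. The remaining ingredients (averaging over $\langle\tau\rangle$ on $\ov\BF_q$-vector groups and on Witt rings in the tame case) are entirely standard.
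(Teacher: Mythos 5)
Your argument is correct and is essentially the paper's own proof: both reduce to the Moy--Prasad graded pieces, which are $\ov\BF_q$-vector groups killed by $p$, use that $e=|\tau|$ is prime to $p$ to average away the cohomology, and then conclude by d\'evissage along the filtration and the exact sequence of non-abelian cohomology for the second claim. The only cosmetic difference is that you carry the argument functorially in perfect $\ov\BF_q$-algebras $R$, whereas the paper argues on $\ov\BF_q$-points and then invokes that a perfectly smooth affine group scheme is determined by them; both are fine.
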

\begin{proof}
As $r+ > 0$, the group $L^+_E\CG_{\bx,r+}(\ov\BF_q) = \CG_{\bx,r+}(\CO_E)$ is an inverse limit of groups which are iterated extensions of $\CO_E$-modules. Since the order of $\tau$ is coprime to $p$, the cohomology of each filtration step vanishes, and hence $H^1(\Gal(E/\breve k), L^+_E\CG_{\bx,r+}) = 0$ by d\'evissage. The last claim of the lemma follows from the exact sequence of non-abelian group cohomology and the fact that a perfectly smooth affine group scheme is (as a variety) uniquely determined by its $\BF_q$-points.
\end{proof}

Let $H \subseteq G$ be a closed subgroup. Following \cite[\S2.5]{CI_MPDL} one can construct an $\ov\BF_q$-rational closed subgroup $H_{s:r} \subseteq G_{s:r}$ in a similar way. We put $H_r = H_{0:r}$ for simplicity. If, moreover, $H$ is a $k$-rational subgroup, then $H_{s:r}$ is defined over $\BF_q$, and we still denote by $F$ the induced Frobenius automorphisms on $H$ or $H_{s:r}$. Similarly, we have the groups ${}^E H_{s:r}$ and ${}^E H_r$. If $H$ is connected, then $({}^E H_{s:r})^\tau = H_{s:r}$.

\subsection{Tamely ramified tori}\label{sec:tame_tori}
Let $T$ be a $k$-rational maximal torus of $G$ which splits over a finite tamely ramified extension $K/k$. Let $k_f / k$ be the maximal unramified extension in $K$. We write $[K: k] = ef$, with $e = [K:k_f]$ the (prime to $p$) ramification index and $f = [k_f:k]$ the inertia degree of $K/k$. There exist $a \in k_f$ and a uniformizer $\pi$ of $K$ such that $\pi^e = a\varpi$. We write $E = K\breve k$ for the maximal unramified extension of $K$. Then $E/\breve k$ is Galois with cyclic Galois group of order $e$ and we denote by $\tau$ one of its generators. Then $\tau(\pi) = \zeta \pi$ for a primitive $e$th root of unity $\zeta \in \breve k$. We write $E^\times = \CO_E^\times$, and for $r\geq 0$ we write $E^\times_r = 1+ \pi^{\lceil er \rceil}\CO_E$.

We write $W = N_G(T)/T$ for the Weyl group of $T$, $\Phi = \Phi(G_E, T_E)$ for the set of roots of $T_E$ in $G_E$, ${}^EG^\alpha \subseteq G_E$ for the root subgroup of $\alpha$, and $\tilde\Phi = \Phi \times \frac{1}{e}\BZ \,\cup\, \frac{1}{e}\BZ_{\geq 0}$ for the set of affine roots. If $f \in \Phi \times \frac{1}{e}\BZ$, we write $\alpha_f \in \Phi$, $n_f \in \frac{1}{e}\BZ$ such that $f = (\alpha_f,n_f)$. The affine root subgroup of $f$ is ${}^EG^f = ({}^E G^\alpha)_ {n_f:n_f}$. If $\alpha \in \Phi$, we write $\alpha^\vee \colon \bG_{{\rm m},K} \to T_K$ for the corresponding coroot and for $x \in E^\times$, $\alpha^\vee \otimes x$ for the image of $x$ under $\alpha^\vee$. If $x \in E_r^\times$ for some $r\in \widetilde \BR_{\geq 0}$, we also write $\alpha^\vee \otimes x$ for the image of $x$ in ${}^ET_{r:r'}$ (for some $r' \geq r$).

We fix a Chevalley system for $(G_K,T_K)$. Recall from \cite[\S4.2]{BruhatT_84} or \cite[\S1.2]{Adler_98} that this includes a set of $K$-rational isomorphisms $u_\alpha \colon \bG_{{\rm a},K} \to G^{\alpha}$ for $\alpha \in \Phi = \Phi(G_K,T_K)$ satisfying several properties. For an affine root $f =(\alpha,n_f)$ with $\alpha \in \Phi$, we have the parametrization 
\begin{equation} \label{eq:parametrization_affine}
\bG_{{\rm a},\ov\BF_q} \stackrel{\sim}{\to} {}^EG^f, \quad \text{induced by } \quad x \mapsto u_\alpha(\pi^{en_f}[x]), 
\end{equation}
where $[x] = x$ if $\charac k> 0$, and $[x]$ is the Teichm\"uller lift otherwise.

Note that $\tau$ acts on $\Phi$ and on $\tilde\Phi$. We have the formula
\[
\tau(u_\alpha(x)) = u_{\tau(\alpha)}(a_\alpha\tau(x)) \quad \forall x \in E
\]
for some constant $a_\alpha \in K^\times$. Write $a_\alpha = \pi^{v_E(a_\alpha)} \tilde a_\alpha$ with $\tilde a_\alpha \in \CO_K^\times$. 

\begin{lemma}\label{lm:relation_between_constants}
Let $f=(\alpha,n_f)$ be an affine root with $\alpha \in \Phi$, so that $\tau(f) = (\tau(\alpha),n_{\tau(f)})$. 
\begin{itemize}
\item[(1)] We have 
\begin{equation}\label{eq:action_tau_affine_root_subgroups}
\tau(u_f(y)) = u_{\tau(f)}(\zeta^{e n_f}\bar a_\alpha y) \quad \forall y \in \ov\BF_q 
\end{equation}
where $\bar a_\alpha  = \tilde a_\alpha \mod \pi\CO_K \in \BF_{q^f}^\times = (\CO_K/\pi\CO_K)^\times$.
\item[(2)] We have $a_{-\alpha} = a_\alpha^{-1}$ and $\bar a_{-\alpha} = \bar a_\alpha^{-1}$.
\item[(3)] Let $f \in \tilde\Phi$ and $s=f(\bx)$. Supppose that for some $N>0$, $\tau^{2N}(f)=f$ and $\tau^{2N}$ acts trivially on ${}^EG^f$. Then $\zeta^{2eNs}=1$.
\end{itemize}

\end{lemma}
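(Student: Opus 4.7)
For part (1), I would proceed by directly unwinding the parametrization \eqref{eq:parametrization_affine}. Starting from $u_f(y) = u_\alpha(\pi^{en_f}[y])$, apply the defining relation $\tau(u_\alpha(x)) = u_{\tau(\alpha)}(a_\alpha\tau(x))$ with $x = \pi^{en_f}[y]$, then use $\tau(\pi^{en_f}) = \zeta^{en_f}\pi^{en_f}$ together with the fact that $\tau$ fixes the Teichm\"uller lift $[y]$ (since $[y] \in \breve k$, which is pointwise fixed by $\tau \in \Gal(E/\breve k)$). This produces the element $u_{\tau(\alpha)}(a_\alpha \zeta^{en_f}\pi^{en_f}[y])$; since the natural action of $\tau$ on $\tilde\Phi$ preserves the level component, $n_{\tau(f)} = n_f$, so this element is already in the target affine root group ${}^EG^{\tau(f)}$, and reducing modulo $\pi$ (so $a_\alpha \mapsto \bar a_\alpha$, $[y]\mapsto y$) gives exactly $u_{\tau(f)}(\zeta^{en_f}\bar a_\alpha y)$.

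For part (2), I would exploit the Weyl group lift $n_\alpha(1) := u_\alpha(1)u_{-\alpha}(-1)u_\alpha(1) \in N_G(T)(K)$. Applying $\tau$ and using the definition of $a_{\pm\alpha}$ gives
\[\tau(n_\alpha(1)) = u_{\tau(\alpha)}(a_\alpha)\, u_{-\tau(\alpha)}(-a_{-\alpha})\, u_{\tau(\alpha)}(a_\alpha),\]
which still lies in $N_G(T)(K)$ because $T$ is defined over $k$. Inside the rank-one subgroup generated by $u_{\pm \tau(\alpha)}$, a direct $\mathrm{SL}_2$-computation shows that $u_\beta(a)u_{-\beta}(b)u_\beta(a)$ lies in the normalizer of $T$ exactly when $b = -a^{-1}$. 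Comparing forces $-a_{-\alpha} = -a_\alpha^{-1}$, hence $a_{-\alpha} = a_\alpha^{-1}$, and reduction modulo $\pi$ (writing each side as $\pi^{v_E(\cdot)}\tilde a_{\pm\alpha}$) gives $\bar a_{-\alpha} = \bar a_\alpha^{-1}$.

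For part (3), the plan is to iterate (1). After $2N$ applications one obtains
\[\tau^{2N}(u_f(y)) = u_{\tau^{2N}(f)}\Bigl(\zeta^{e\sum_{i=0}^{2N-1} n_{\tau^i(f)}} \cdot \prod_{i=0}^{2N-1} \bar a_{\tau^i(\alpha)} \cdot y\Bigr),\]
and the hypotheses $\tau^{2N}(f) = f$ and triviality of the $\tau^{2N}$-action on ${}^EG^f$ force the scalar factor of $y$ to be $1$. I would then rewrite each level $n_{\tau^i(f)}$ in terms of the $\tau$-invariant quantity $s = f(\bx)$ and the vectorial value $\tau^i(\alpha)(\bx)$, so that the $\zeta$-exponent decomposes as $2eNs$ minus the sum $e\sum \tau^i(\alpha)(\bx)$. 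The remaining task is to show that this residual sum is absorbed by the product $\prod \bar a_{\tau^i(\alpha)}$ via a Chevalley-cocycle identity, after which the constraint collapses to $\zeta^{2eNs} = 1$.

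The main obstacle is the final step of (3): pinning down the precise coboundary-type identity relating $\prod_{i=0}^{2N-1}\bar a_{\tau^i(\alpha)}$ to $\zeta^{e\sum \tau^i(\alpha)(\bx)}$. This requires combining the inversion relation from (2) (which lets me pair up opposite-root contributions inside the product) with the bookkeeping of how $v_E(a_\alpha)$ measures the level shift $(\alpha - \tau(\alpha))(\bx)$ built into the Chevalley system, and is essentially the only nontrivial input beyond the formal iteration of (1).
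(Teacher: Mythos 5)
Your overall route coincides with the paper's: unwind the parametrization for (1), use the normalizer criterion $u_\alpha(x)u_{-\alpha}(y)u_\alpha(x)\in N_G(T)$ iff $xy=-1$ for (2), and iterate (1) plus invoke (2) for (3). Part (2) is correct as written. But part (1) contains a false claim that would derail the argument: it is \emph{not} true that $\tau$ preserves the level component of an affine root. Writing $a_\alpha=\pi^{v_E(a_\alpha)}\tilde a_\alpha$, the computation $\tau(u_\alpha(\pi^{en_f}[y]))=u_{\tau(\alpha)}(\tilde a_\alpha\zeta^{en_f}\pi^{en_f+v_E(a_\alpha)}[y])$ shows that $en_{\tau(f)}=en_f+v_E(a_\alpha)$; the factor $\pi^{v_E(a_\alpha)}$ is absorbed into the parametrization of ${}^EG^{\tau(f)}$, and it is the \emph{unit part} $\tilde a_\alpha$, not $a_\alpha$ itself, that reduces to $\bar a_\alpha$. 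Your phrase ``reducing modulo $\pi$ (so $a_\alpha\mapsto\bar a_\alpha$)'' only makes sense if $v_E(a_\alpha)=0$, which fails in general --- indeed your own part (3) needs the levels $n_{\tau^i(f)}$ to vary (one has $n_{\tau^N(f)}=n_f+2\alpha(\bx)$ when $\tau^N(\alpha)=-\alpha$), so you are implicitly contradicting yourself between (1) and (3). The final formula \eqref{eq:action_tau_affine_root_subgroups} is nonetheless correct once the valuation bookkeeping is done properly.

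For (3), the step you flag as ``the main obstacle'' is not actually an identity relating $\prod_i\bar a_{\tau^i(\alpha)}$ to $\zeta^{e\sum_i\tau^i(\alpha)(\bx)}$: in the relevant situation one has $\tau^{N}(\alpha)=-\alpha$ (this is how the lemma is applied, and it is what the paper's proof tacitly uses), so pairing the index $i$ with $i+N$ makes the two factors vanish \emph{separately} --- the sum $\sum_{i=0}^{2N-1}\tau^i(\alpha)(\bx)$ telescopes to $0$ because $\tau^{i+N}(\alpha)=-\tau^i(\alpha)$, and the product $\prod_{i=0}^{2N-1}\bar a_{\tau^i(\alpha)}=\prod_{i=0}^{N-1}\bar a_{\tau^i(\alpha)}\bar a_{-\tau^i(\alpha)}$ equals $1$ directly by part (2). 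Equivalently (and this is how the paper organizes it), $n_{\tau^i(f)}+n_{\tau^{i+N}(f)}=2s$ since $\bx$ is $\tau$-fixed and $\alpha$ is linear. No cocycle identity involving $v_E(a_\alpha)$ is needed, and invoking one would be a detour; as it stands, your proof of (3) is incomplete because this closing step is exactly what you have not supplied.
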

\begin{proof}
(1) follows directly from \eqref{eq:parametrization_affine} and the above formula for the action of $\tau$ on $u_\alpha(x)$. For (2) note that $u_\alpha(x)u_{-\alpha}(y)u_\alpha(x)$ lies in $N_G(T)(E)$ if and only if $xy=-1$. Applying $\tau$ to this expression and noticing that $\tau$ preserves $N_G(T)(E)$, one deduces $a_\alpha \cdot a_{-\alpha}= 1$. The second claim follows from the first by (1). To prove (3), first note that as $\bx$ is $\tau$-invariant, we have 
\begin{equation}\label{eq:2s_equal_fplustNf} 
2s = \tau^N(f)(\bx) + f(\bx) = \alpha(\bx) + n_{\tau^N(f)} + (-\alpha)(\bx) + n_f = n_f + n_{\tau^N(f)},
\end{equation}
as $\alpha$ is a linear function. Then we compute
\begin{align*}
u_f(y) &= \tau^{2N}(u_f(y)) \\ 
&= \tau^N\left(u_{\tau^N(f)}\left(\zeta^{e\sum_{i=0}^{N-1} n_{\tau^i(f)}} \cdot \prod_{i=0}^{N-1} \bar a_{\tau^i(\alpha)} \cdot y\right)\right) \\
&= u_f\left(\zeta^{e\sum_{i=0}^{N-1} n_{\tau^i(f)} + n_{\tau^{i+N}(f)}} \cdot \prod_{i=0}^{N-1} (\bar a_{\tau^i(\alpha)} \bar a_{\tau^{i+N}(\alpha)})\cdot y \right) \\
&= u_f\left(\zeta^{e\sum_{i=0}^{N-1} n_{\tau^i(f)} + n_{\tau^{i+N}(f)}} \cdot  y \right) \\
&= u_f\left(\zeta^{2seN} \cdot  y \right)
\end{align*}
where the first equation is by assumption, the second and the third are by part (1) of the lemma, the fourth is by part (2) of the lemma, the fifth is by \eqref{eq:2s_equal_fplustNf}. As this holds for any $y \in \ov\BF_q$, the claim follows. 
\end{proof}

\subsection{Subgroups attached to a torus character}\label{sec:Howe}
We keep the setup of \S\ref{sec:tame_tori} and assume additionally that $T$ is elliptic in $G$. \emph{We assume that $p>2$}. We fix an $r \in \BR_{\geq 0}$, a smooth character
\[ \phi \colon T^F = T(k) \to \overline\BQ_\ell^\times \]
of depth $\leq r$, and we denote the character induced by $\phi$ on $T_{0:r}^F$ by the same letter.
Moreover, we assume that
\begin{equation}\label{eq:assumption_Howe}
\text{$\phi$ admits a Howe factorization $(G_i,r_i,\phi_i)_{i=-1}^d$ as in \cite[\S3.6]{Kaletha_19}}.
\end{equation}
Recall that this means that \[\phi = \phi_{-1} \prod_{i=0}^d \phi_i |_{T(k)},\] where $\phi_{-1}$ is a character of $T(k)$ of depth zero, and $\L := (G_i,r_i,\phi_i)_{i=0}^d$ is {\it generic datum} defined by the following conditions:
\begin{itemize}
\item $T= G^{-1} \subseteq G^0 \subsetneq G^1 \subsetneq \cdots \subsetneq G^d = G$ are $k$-rational Levi subgroups of $G$;
\item $0 =: r_{-1} < r_0 < \cdots < r_{d-1} \le r_d \le r$ if $d \ge 1$ and $0 \le r_0$ if $d = 0$;
\item $\phi_i: G^i(k) \to \ov\BQ_\ell^\times$ is a character of depth $r_i$, and trivial on $G_\der^i(k)$ \footnote{In \cite[Definition 3.6.2]{Kaletha_19}, $\phi_i$ is only required to be trivial over $G^i_\sc(k)$. However, the proof of \cite[Lemma 3.6.9]{Kaletha_19} shows that $\phi_i$ can be chosen to be trivial over $G_\der^i(k)$.} for $-1 \le i \le d$;
\item $\phi_i$ is of depth $r_i$ and is $(G^i, G^{i+1})$-generic in the sense of \cite[\S 9]{Yu_01} for $0 \le i \le d-1$.
\end{itemize}

\begin{remark}\label{rem:when_Howe_exists}
If $p$ is neither a bad prime for $G$, nor divides $|\pi_1(G_\der)|$, then any character $\phi$ admits a Howe factorization by \cite[Proposition 3.6.7]{Kaletha_19}.
\end{remark}

The role of the Levi subgroup $G^0$ will be somewhat special, and we will often denote \[ L = G^0.\] Moreover, for $0 \leq i \leq d$, we set \[ s_i = \frac{r_i}{2}.\] Write $T^i_\der = G^i_\der \cap T$. Following \cite{Yu_01} (see also \cite[\S3.2]{Nie_24} and \cite[\S4]{IN_sheaves}), we associate to the generic datum $\L$ the following subgroups of $G$.
\begin{align*}
    \widetilde\CK_\L &= G^0_{[\bx]} G^1_{\bx, s_0} \cdots G^d_{\bx, s_{d-1}} \\
    \CK_\L &= G^0_{\bx, 0} G^1_{\bx, s_0} \cdots G^d_{\bx, s_{d-1}} \\
    \CH_\L &= G^0_{\bx, 0+} G^1_{\bx, s_0} \cdots G^d_{\bx, s_{d-1}} \\
    \CK_\L^+ &= G^0_{\bx, 0+} G^1_{s_0+} \cdots G^d_{\bx, s_{d-1}+} \\
    \CT_\L & = (T^0_\der)_{\bx, 0+} (T^1_\der)_{\bx, r_0+} \cdots (T^d_\der)_{\bx, r_{d-1}+}.\\
\end{align*} Here $[\bx]$ is the natural image of $\bx$ in the reduced Bruhat-Tits building of $G^0$ and  $G^0_{[\bx]}$ denotes the stabilizer of $[\bx]$ in $G^0$.

We define another subgroup $\CE_\L$ as follows. First we put \begin{align*}{}^E \CE_\L =({}^E G^0_\der)_{\bx; 0+,0+} ({}^E G^1_\der)_{\bx; r_0+, s_0+} \cdots ({}^E G^d_\der)_{\bx; r_{d-1}+, s_{d-1}+} {}^E G_{\bx, r+}, \end{align*} where $({}^E G_\der^i)_{\bx; r_{i-1}, s_{i-1}}$ is the subgroup generated by $({}^E G^i_\der)_{r_{i-1}+}$ and ${}^E G^f$ for $f \in \tPhi_{G^i} \setminus \tPhi_{G^{i-1}}$ such that $f(\bx) > s_{i-1}$. Here $\tPhi_{G^i}$ denotes the root system of $T_E$ in $G_E^i$ for $0 \le i \le d$. Note that ${}^E \CE_\L$ is $\tau$-stable and we set
\begin{align*}
\CE_\L &= ({}^E \CE_\L)^\tau,
\end{align*}
which is a subgroup of $\CK_\L^+$. 

We put $\widetilde \CK_{\L, r} = \widetilde\CK_\L / G_{\bx, r+}$ and ${}^E \widetilde \CK_{\L, r} = {}^E \widetilde\CK_\L / {}^E G_{\bx, r+}$. Other subgroups $\CK_{\L, r}$, ${}^E \CK_{\L, r}$ and so on are defined in the same way. Notice that $\CK_{\L, r}^+ = \CE_{\L, r} \cdot T_{0+:r}$ and hence $\CK_{\L, r}^+ / \CE_{\L, r} = T_{0+:r}/ \CT_{\L, r}$.

Finally we record a lemma, that guarantees that the subgroups attached to $\phi$ above --and hence also all constructions in the rest of the article-- are independent of the choice of the (tamely ramified) splitting field $K/k$ of $T$. This seems to be well-known to experts, but we could not find a reference in the literature.

\begin{lemma}\label{lm:indep_of_splitting_field}
Let $K/k$ and $K'/k$ be two finite tamely ramified extensions splitting $T$. We can canonically identify $\Phi = \Phi(G_K,T_K) = \Phi(G_{K'},T_{K'})$. Then for each $r \in \BR_{>0}$, we have
\[\{\alpha \in \Phi \colon \phi\circ N_{K/k} \circ \alpha^\vee(K_r^\times) = 1 \} = \{\alpha \in \Phi \colon \phi\circ N_{K/k} \circ \alpha^\vee(K_r^{\prime\times}) = 1\}. \]
With other words, the set $R_r$ of \cite[Eq. (3.6.1)]{Kaletha_19}, and hence also all Levi subgroups $G^i$, are independent of the choice of the splitting field $K/k$ of $T$.
\end{lemma}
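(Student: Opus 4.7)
The plan is to reduce to a single inclusion of splitting fields and then use the tower formula for norms together with surjectivity of the norm on depth filtrations of a tame extension.

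First I would replace $(K,K')$ by the compositum $KK' \subset k^{\rm t}$, which is again a tame Galois splitting field of $T$. Comparing each of $K$ and $K'$ separately against $KK'$ reduces the problem to the case of a single inclusion $K \subseteq K'$ with $[K':K]$ prime to $p$.

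Next, since $T$ (and hence every coroot $\alpha^\vee$) is already defined over $K$, every $\sigma \in \Gal(K'/K)$ commutes with $\alpha^\vee$, giving the identity $N_{K'/K}(\alpha^\vee(x)) = \alpha^\vee(N_{K'/K}(x))$ for $x \in K'^{\times}$. Combined with the tower formula $N_{K'/k} = N_{K/k} \circ N_{K'/K}$, the assertion becomes
\[
\{\alpha : \phi \circ N_{K/k} \circ \alpha^\vee \text{ is trivial on } K_r^\times\} = \{\alpha : \phi \circ N_{K/k} \circ \alpha^\vee \text{ is trivial on } N_{K'/K}(K_r^{\prime \times})\}.
\]
Since $N_{K'/K}(K_r^{\prime\times}) \subseteq K_r^\times$, the inclusion from left to right is automatic.

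The technical heart of the argument is therefore the identity $N_{K'/K}(K_r^{\prime \times}) = K_r^\times$ for a finite tame extension $K'/K$. I would factor $K'/K$ into its maximal unramified subextension followed by a totally tamely ramified subextension and treat each separately. For the unramified part, surjectivity of the norm on every piece of the principal unit filtration is classical (via surjectivity of the residue trace and successive approximation). For a totally tamely ramified extension of degree $m$ prime to $p$, the relation $\pi_{K'}^m = u\pi_K$ with $u \in \CO_{K'}^\times$ gives $\lceil e(K'/k)\, r/m\rceil = \lceil e(K/k)\, r\rceil$, and combining the expansion $N(1+y) = 1 + \mathrm{Tr}(y) + O(y^2)$ with surjectivity of the tame trace and successive approximation then produces the full filtration step $K_r^\times$. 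With this in hand, the two sets above coincide, yielding the lemma.

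I expect the main obstacle to be the careful bookkeeping of filtration exponents under the norm in the totally tame ramified step, where one has to track both the effect on the leading term $\mathrm{Tr}(y)$ and the contribution of higher symmetric functions of the conjugates of $y$; everything else (the reduction to towers, the commutation of $\alpha^\vee$ with Galois, and the unramified case) is essentially formal.
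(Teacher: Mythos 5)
Your proposal is correct and follows essentially the same route as the paper: reduce via the compositum to a single inclusion $K \subseteq K'$, use $\alpha^\vee \circ N_{K'/K} = N_{K'/K} \circ \alpha^\vee$ and transitivity of norms, and then invoke $N_{K'/K}(K_r^{\prime\times}) = K_r^\times$ for tame extensions by treating the unramified and totally ramified cases separately. The only difference is that the paper simply cites Serre's \emph{Local Fields} for the two norm-surjectivity statements that you sketch by hand.
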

\begin{proof}
The compositum of $K$ and $K'$ is still finite tame over $k$ and splits $T$. Thus, replacing $K'$ by this compositum, we may assume that $K \subseteq K'$. Then, as $\alpha^\vee \circ \Nm_{\BG_m,K'/K} = \Nm_{T,K'/K} \circ \alpha^\vee$, the statement follows from the transitivity of norm maps and the fact that $N_{K'/K}(K_r^{\prime\times}) = K_r^\times$. The latter equality holds because $r>0$ and $K'/K$ is tamely ramified. Indeed, it suffices to show this when $K'/K$ is unramified resp. totally ramified. In the first case, the claim follows from \cite[Cor. \!to Prop. 3 of Chap. V]{Serre_LF} and in the second case it follows from \cite[Cor. 3 to Prop. 5 of Chap. V]{Serre_LF}.
\end{proof}

\subsection{Reducing modulo $\CE_{\L, r}$}\label{sec:red_mod_CE}
We introduce the following convenient notation. By fixing $r$ and a Howe factorization $(\phi_{-1}, \L)$ of $\phi$ as in \S\ref{sec:Howe}, we put
\[\widetilde\BK = \widetilde\CK_{\L, r} / \CE_{\L, r} \quad \text{ and } \quad {}^E \widetilde\BK = {}^E \widetilde\CK_{\L, r} / {}^E\CE_{\L, r}.\]  We write $\BK, \BH, \BL, \BT, \BT^s$ (with $0 \le s \le r \in \widetilde\BR$) for the natural images of $\CK_{\L,r}, \CH_{\L, r}, L_r,  T_r, T_{s:r}$ in $\widetilde\BK$. The subgroups ${}^E\BH$, ${}^E\BL$, ${}^E\BT$ and ${}^E\BT^s$ are defined in a similar way. Note that by \Cref{lm:galois_coho_vanishes} we have $({}^E ?)^\tau = ?$ for $? \in \{\BH, \BL, \BT,$ $\BT^s, \dots \}$.

We note that $\BH/\BK^+$ and ${}^E\BH/{}^E\BK^+$  are linear $\ov\BF_q$-spaces and we denote them by
\[
V = \BH/\BK^+ \quad \text{ and } \quad {}^E V = {}^E\BH/{}^E\BK^+.
\]

It will also be convenient to write
\[
\hat V = \BH \quad \text{ and } \quad {}^E \hat V = {}^E\BH.
\]
indicating that we treat elements of $\BH$ as lifts of $V$.

\subsection{Summary of setup}\label{sec:summary_setup} Attached to the character $\phi$ we have the following groups. The subgroup $\BH \subseteq \BK$ is the unipotent radical of $\BK$, we have $\BK/\BH \cong L_0$ and $\BK = \BL \BH$. We also have a short exact sequence
\begin{equation}\label{eq:extension_Heisenberg}
0 \to \BK^+ \to \hat V = \BH \to V = \BH/\BK^+ \to 0,
\end{equation}
of $\ov\BF_q$-groups, with $\BK^+ \cong \BT^{0+} \cong T_{0+:r}/\CT_{\phi,r}$.
This is an extension of abelian groups, so that the commutator pairing of $\BH$ induces a $\BK^+$-valued symplectic pairing on $V$. We study this pairing in \S\ref{sec:Heisenberg} below. We also have a similar situation, including sequence \eqref{eq:extension_Heisenberg}, for ${}^E\BK$ and its respective subgroups.

\section{Deligne--Lusztig construction in a Heisenberg group}\label{sec:Heisenberg}
We keep the setup from \S\ref{sec:tame_tori} - \ref{sec:summary_setup}.
The groups $\hat V = \BH$ and ${}^E\hat V = {}^E\BH$ are non-canonically isomorphic to Heisenberg groups. We exhibit an explicit isomorphism by using Yu's splitting of the sequence \eqref{eq:extension_Heisenberg} for ${}^E\hat V$, and proving that it is $\tau$-equvariant. In \S\ref{subsec:variety} we define a Deligne--Lusztig type subvariety for the group $\BK$.

\subsection{An isomorphism of Heisenberg groups}\label{sec:isom_Heisenberg}
Let
\[ \pi: {}^E \hat V \to {}^E V\] denote the projection homomorphism. Note that ${}^E V$ is a linear space over $\ov\BF_q$. The center of ${}^E\hat V$ is ${}^E \BK^+ \cong {}^E T_{0+:r} / {}^E \CT_{\phi, r}$.

Let $\a \in \Phi \sm \Phi_L$. We set $1 \le i_\a \le d$ such that $\a \in \Phi_{G^i} \sm \Phi_{G^{i-1}}$ and put $r_\a = r_{i_\a - 1}$, where $\Phi_{G^i} = \Phi(G^i_E, T_E)$ is the root system of $G^i$ for $0 \le i \le d$. We set $({}^E\BK^+)_\a \subseteq {}^E\BK^+$ to be the natural image of $\a^\vee(E_{r_\a}^\times)$. Denote by ${}^E V_\a \subseteq {}^E V$ and ${}^E \hat V_\a \subseteq {}^E \hat V$ the subgroups generated by the natural images of the affine root subgroups ${}^E G^f$ such that $\a_f = \a$ and $f(\bx) \ge r_\a/2$ respectively (note that both are $\ov\BF_q$-vector spaces). Note that $\pi$ restricts to an isomorphism of linear spaces \[{}^E \hat V_\a \cong {}^E V_\a,\] which are both isomorphic to the linear space $(G^\alpha)_{\frac{r_\a}{2}: \frac{r_\a}{2}}$ of dimension $\le 1$.

Notice that $[{}^E \hat V_\a, {}^E \hat V_\b] = \d_{\a, -\b} ({}^E\BK^+)_\a$ for any $\a, \b \in \Phi$ (Kronecker delta). Hence the map $(x, y) \mapsto [x, y] = xyx^{-1}y^{-1}$ induces a skew symmetric pairing \[\k: {}^E \hat V \times {}^E \hat V \to {}^E\BK^+.\] It descends naturally to a pairing on ${}^E V$, which we still denote by $\k$. Then we have \[{}^E V \cong \bigoplus_{\a \in \Phi} {}^E V_\a, \quad\ {}^E V \cong {}^E\BK^+ \prod_{\a \in \Phi} {}^E \hat V_\a,\] where the product is taken with respect to any fixed order.

Let \[{}^E V^\sharp = {}^E V \times {}^E\BK^+\] be the Heisenberg group associated to $({}^E V, \k)$, whose multiplication law is given by \[(x, a)(y, b) = (x+y, a+b+\frac{1}{2}\k(x, y)), \text{ for }(x, a), (y, b) \in {}^E V \times {}^E\BK^+.\] Note that $L_r$ acts on ${}^E V^\sharp$ by $g: (x, a) \mapsto (\ad(g)x, a)$ for $g \in L_r$ and $(x, a) \in {}^E V^\sharp$.

Let $D \subseteq \Phi \sm \Phi_{L}$. We set \[ {}^E V_D = \bigoplus_{\a \in D} {}^E V_\a \quad \text{ and } \quad {}^E \hat V_D = \prod_{\a \in D} {}^E \hat V_\a,\]
where in the latter case we assume that $D \cap -D = \varnothing$, so that ${}^E\hat V_D$ is commutative subgroup of ${}^E \hat V$. Moreover, if $D$ is $\tau$-invariant, ${}^E V_D$ is too, and we set $V_D = ({}^E V_D)^\tau$. We warn the reader that we do not yet define $\hat V_D$, this will only happen in \S\ref{subsec:variety}; moreover, $\hat V_D$ will not be equal to $({}^E \hat V_D)^\tau$.

Let $P = L U_P \subseteq G$ be a parabolic subgroup (defined over $K$) with Levi subgroup $L \subseteq G$ and unipotent radical $U_P$. Denote by $\Phi_{U_P} \subseteq \Phi$ the set of roots appearing in $U_P$. Let $W_1 = {}^E \hat V_{\Phi_{U_P}}$ and $W_2 = {}^E \hat V_{-\Phi_{U_P}}$. Then $W_1 \cap {}^E\BK^+ = W_2 \cap {}^E\BK^+= \{0\}$ and each element of ${}^E \hat V$ has a unique expression $w_1 w_2 z$ with $w_1 \in W_1$, $w_2 \in W_2$ and $z \in {}^E\BK^+$.
\begin{proposition} \label{lift}
    There is a both ${}^E L_{[\bx]}$-equivariant (with respect to the adjoint actions) and  $\Gal(E/\brk)$-equivariant group isomorphism \[j: {}^E \hat V \stackrel{\sim}{\to} {}^E V^\sharp\] given by $w_1 w_2 z \mapsto (\pi(w_1 w_2), z + \frac{1}{2}\k(w_1, w_2))$ for $w_1 \in W$, $w_2 \in W_2$ and $z \in {}^E\BK^+$.
\end{proposition}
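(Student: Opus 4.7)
The plan is to first establish that $j$ is a group isomorphism by direct calculation, and then separately prove the two equivariance properties, reducing $\tau$-equivariance to the independence of $j$ from the choice of parabolic $P$.

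\textbf{Step 1 (bijectivity).} The unique factorization $v = w_1 w_2 z$ with $w_i \in W_i$ and $z \in {}^E\BK^+$ gives a bijection of sets ${}^E\hat V \cong W_1 \times W_2 \times {}^E\BK^+$. Since $\pi$ restricts to isomorphisms $W_1 \cong {}^EV_{\Phi_{U_P}}$ and $W_2 \cong {}^EV_{-\Phi_{U_P}}$, and ${}^EV = {}^EV_{\Phi_{U_P}} \oplus {}^EV_{-\Phi_{U_P}}$ (since $\Phi \smallsetminus \Phi_L = \Phi_{U_P} \sqcup -\Phi_{U_P}$), the map $j$ is bijective.

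\textbf{Step 2 (homomorphism).} I would compute $(w_1 w_2 z)(w_1'w_2'z')$ inside ${}^E\hat V$ and compare with $j(w_1w_2z) \cdot j(w_1'w_2'z')$ inside ${}^EV^\sharp$. Since $z,z'$ are central and $W_1,W_2$ are commutative, the only nontrivial commutator comes from moving $w_2$ past $w_1'$, which produces the central element $\k(w_2,w_1')$. One obtains
\[
(w_1 w_2 z)(w_1' w_2' z') = (w_1 w_1')(w_2 w_2')\bigl(z+z'+\k(w_2,w_1')\bigr).
\]
Applying $j$ and using bilinearity of $\k$ together with $\k(W_i,W_i)=0$, the second coordinate differs from that of the Heisenberg product by $\tfrac12 \k(w_2,w_1') + \tfrac12 \k(w_1',w_2)$, which vanishes by skew-symmetry. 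The use of $\tfrac12$ requires $p\neq 2$, which is in force by assumption.

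\textbf{Step 3 ($L_{[\bx]}$-equivariance).} This is formal: the adjoint action of ${}^EL_{[\bx]}$ preserves $\Phi_{U_P}$ (because $L$ normalizes $U_P$), hence preserves each of $W_1$, $W_2$, and ${}^E\BK^+$, and commutes with $\pi$ and $\k$. Consequently the decomposition $v = w_1 w_2 z$ is carried to $\ad(g)(v) = \ad(g)(w_1)\ad(g)(w_2) \ad(g)(z)$, so $j$ intertwines the adjoint and Heisenberg actions.

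\textbf{Step 4 ($\tau$-equivariance, the main obstacle).} The key difficulty is that $P$ is only defined over $K$, so $\tau$ generally does not preserve $W_1,W_2$: it sends them to the pieces associated to $\tau(P)$, which is again a parabolic of $G_E$ with Levi $L$. The cleanest way out is to prove:

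\emph{Claim.} The map $j$ is independent of the choice of parabolic $P \supseteq L$ over $E$.

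Granted the claim, $\tau$-equivariance is immediate: applying $\tau$ to the factorization $v = w_1 w_2 z$ gives a factorization of $\tau(v)$ with respect to the parabolic $\tau(P)$, and since $j_P = j_{\tau(P)}$, the image of $\tau(v)$ under $j$ is the $\tau$-image of $j(v)$.

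To prove the claim I would first verify it for the opposite parabolic $P^-$: one has $w_1' = w_2$, $w_2' = w_1$, and rewriting $v = w_1 w_2 z = w_2 w_1 \cdot \bigl(z + \k(w_1,w_2)\bigr)$ shows $z' = z + \k(w_1,w_2)$, so that
\[
z' + \tfrac12 \k(w_2,w_1) = z + \k(w_1,w_2) - \tfrac12 \k(w_1,w_2) = z + \tfrac12 \k(w_1,w_2).
\]
For a general second parabolic $P'$, the set of parabolics of $G_E$ containing $L$ is in bijection with the Weyl chambers for $(L,T)$, and any two are connected by a chain of adjacent parabolics differing by a single simple reflection (i.e., by replacing one root subgroup $\hat V_\alpha$ by $\hat V_{-\alpha}$). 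The computation for $P^-$ applied root-by-root then shows that each such elementary modification leaves $z + \tfrac12 \k(w_1,w_2)$ unchanged, proving the claim and finishing the proof.
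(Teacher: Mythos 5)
Your proof is correct, but it is organized quite differently from the paper's. The paper simply cites Yu (\cite[Lemma 10.1 \& 10.2]{Yu_01}) for the facts that $j$ is a group isomorphism and is ${}^E L_{[\bx]}$-equivariant --- the content of your Steps 1--3, which you verify directly and correctly (the cancellation $\tfrac12\k(w_2,w_1')+\tfrac12\k(w_1',w_2)=0$ is exactly the point, and $p\neq 2$ is indeed what makes $\tfrac12$ legitimate) --- and devotes its proof entirely to the Galois equivariance. There your route genuinely differs. The paper keeps the polarization $(W_1,W_2)$ fixed: it writes $\tau(w_1)=x_1x_2$ and $\tau(w_2)=y_1y_2$ with $x_i,y_i$ in $W_1$ resp.\ $W_2$, observes that $\k(x_1,x_2)=\k(y_1,y_2)=0$ because the components of $\tau(w_1)$ live in root spaces indexed by $\tau(\Phi_{U_P})$, which meets its own negative trivially, and then computes $j(\tau(w_1w_2z))$ by re-sorting $x_1x_2y_1y_2$ into $(x_1y_1)(x_2y_2)$ at the cost of the central term $\k(x_2,y_1)$. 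You instead prove the stronger statement that $j$ is independent of the polarization and deduce equivariance formally; the paper's computation is in effect the special case of your Claim for the pair $(P,\tau(P))$. Your version is more conceptual and reusable, at the cost of the chain argument.

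Two imprecisions in Step 4, neither fatal. First, two ``adjacent'' parabolics with Levi $L$ need not differ by a single pair $\{\alpha,-\alpha\}$ (crossing a wall of the arrangement on $X_*(Z(L))\otimes\BR$ flips all roots restricting to that wall), and conversely flipping a single pair need not yield a parabolic. But your flip computation never uses that the intermediate decompositions come from parabolics: any splitting $\Phi\sm\Phi_L=D_1\sqcup D_2$ with $D_i\cap -D_i=\varnothing$ gives commutative subgroups ${}^E\hat V_{D_i}$, a unique factorization, and hence a well-defined formula for $j$; any two such splittings are connected by single-pair flips, and your one-flip computation (which is correct, since $\k(u,v_\alpha)=\k(v_{-\alpha},u')=0$ for the complementary factors) closes the argument. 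Second, and more simply: once Step 2 is done for an arbitrary polarization, independence is immediate, since $j$ restricted to each ${}^E\hat V_\alpha$ is $v\mapsto(\pi(v),0)$ and to ${}^E\BK^+$ is $z\mapsto(0,z)$ regardless of $P$, and these subgroups generate ${}^E\hat V$. Either repair makes your Step 4 complete.
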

\begin{proof}
    It is proved in \cite[Lemma 10.1 \& 10.2]{Yu_01} that $j$ is an ${}^E L_{[\bx]}$-equivariant group homomorphism. We show $j$ is $\Gal(E/k)$-equivariant.

    Let $\t \in \Gal(E/k)$, $w_1 \in W_1$, $w_2 \in W_2$ and $z \in {}^E\BK^+$.  Noticing that $\t ({}^E \hat V_\a) = {}^E \hat V_{\t(\a)}$ for $\a \in \Phi \sm \Phi_{L}$, we have $\t(w_1) = x_1 x_2$ and $\t(w_2) = y_1 y_2$ for some $x_1, y_1 \in W_1$ and $x_2, y_2 \in W_2$ such that $\k(x_1, x_2) = \k(y_1, y_2) = 0$. Therefore, \begin{align*}
        j(\t(w_1 w_2 z)) &= j(x_1 x_2 y_1 y_2 \t(z)) \\ &= j((x_1 y_1) (x_2 y_2) (\t(z) + \k(x_2, y_1))) \\ &=(\pi(x_1 y_1 x_2 y_2), \t(z) + \frac{1}{2}(\k(x_1, y_2) + \k(x_2, y_1))) \\ &=(\pi(\t(w_1 w_2)), \t(z) + \frac{1}{2} \k(x_1x_2, y_1 y_2))) \\  &=(\pi(\t(w_1 w_2)), \t(z) + \frac{1}{2} \k(\t(w_1), \t(w_2)) \\ &= (\t(\pi(w_1 w_2)), \t(z) + \frac{1}{2} \t(\k(w_1, w_2))) \\ &= \t(j(w_1 w_2 z)),
    \end{align*} where the third equality follows from that $\k(x_1, x_2) = \k(y_1, y_2) = 0$; the fourth follows from that $\k(x_1, y_1) = \k(x_2, y_2) = 0$ and the sixth follows from that $\t$ commutes with $\pi$ and $\k$. The proof is finished.    
\end{proof}

\subsection{A Lagrangian subspace} \label{subsec:lagrangian}
In the rest of the paper, we assume further that $T$ is a maximally unramified maximal torus of $L$ in the sense of \cite[\S 3.4.1]{Kaletha_19}. (This is the case if $(T,\phi)$ is a tame elliptic regular pair in the sense of \cite[Definition 3.7.5]{Kaletha_19}.) In particular, there exists a $\t$-fixed Borel subgroup $B = T U$ of $L$ with unipotent radical $U$.

Recall that $\t$ is a generator of $\Gal(E / \brk)$ and that $\BK^+ = ({}^E\BK^+)^\t$, $V = ({}^E V)^\t$ and $\hat V = ({}^E \hat V)^\tau$. Note that $\kappa \colon {}^E V \times {}^E V \to {}^E\BK^+$ restricts to a pairing $V \times V \to \BK^+$, which we again denote by $\kappa$.

Let $\CC$ be the set of $\t$-orbits $C$ of $\Phi \sm \Phi_{L}$ such that $V_C \neq 0$, where $V_C = ({}^E \hat V_C)^\tau$. We have \[V \cong \bigoplus_{C \in \CC} V_C, \quad\ \hat V \cong \BK^+\prod_{C\in\CC} \hat V_C.\]
\begin{lemma} \label{V_C}
    Let $C$ be a $\t$-orbit of $\Phi \sm \Phi_L$. We have

    (1) $\dim V_C \le 1$;

    (2) $V_C \neq 0$ if and only if ${}^E V_C \neq 0$ and $\t^{|C|}$ acts on ${}^E V_C$ trivially;

    (3) $\k(V_C, V_{C'}) = 0$ if $C \neq -C'$;
    
    (4) $V_C = 0$ if $C = -C$;

    (5) $V_C \neq 0$ if and only if $V_{-C} \neq 0$, and in this case, $\k(V_C, V_{-C}) \neq 0$.
\end{lemma}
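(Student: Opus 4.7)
The plan is to prove the five parts sequentially, the main tool being the explicit $\t$-action formula from Lemma~\ref{lm:relation_between_constants} combined with the root-theoretic structure of the commutator pairing $\k$. For (1) and (2), I would fix any $\a_0\in C$ and consider the projection $p_{\a_0}\colon {}^E V_C=\bigoplus_{\a\in C}{}^E V_\a\twoheadrightarrow {}^E V_{\a_0}$. Its restriction to $V_C=({}^E V_C)^\t$ is injective, since a $\t$-invariant $v=(v_\a)_{\a\in C}$ satisfies $v_{\t^j(\a_0)}=\t^j(v_{\a_0})$ and is thus determined by $v_{\a_0}$; consistency around the full orbit is precisely $\t^{|C|}(v_{\a_0})=v_{\a_0}$, so the image equals $({}^E V_{\a_0})^{\t^{|C|}}$. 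Since $\dim_{\ov\BF_q}{}^E V_{\a_0}\le 1$, this yields (1) and (2) simultaneously. Part (3) is then a direct consequence of the relation $\k({}^E V_\a,{}^E V_\b)=0$ for $\b\ne -\a$ (recalled in the bracket formula preceding Proposition~\ref{lift}): if $C\ne -C'$ then no pair $(\a,\b)\in C\times C'$ satisfies $\b=-\a$, and bilinearity gives $\k(V_C,V_{C'})=0$.

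Part (4) I expect to be the main obstacle. Writing $|C|=2k$ with $\t^k(\a_0)=-\a_0$, I would assume $V_C\ne 0$ for contradiction; by (2), $\t^{2k}$ then acts trivially on some non-zero ${}^E V_{\a_0}$, generated by an affine root $f=(\a_0,n_f)$ with $s:=f(\bx)=r_{\a_0}/2$. Iterating Lemma~\ref{lm:relation_between_constants}(1), the scalar of $\t^{2k}$ on ${}^E V_{\a_0}$ equals
\[
\z^{e\sum_{j=0}^{2k-1} n_{\t^j(f)}}\cdot \prod_{j=0}^{2k-1}\bar a_{\t^j(\a_0)}.
\]
Pairing $j\leftrightarrow j+k$ and applying Lemma~\ref{lm:relation_between_constants}(2) shows that the Chevalley-constant product equals $1$, while the identity $n_{\t^j(f)}+n_{\t^{j+k}(f)}=2s=r_{\a_0}$ (which uses that $\bx$ is $\t$-fixed and $\t^k(\a_0)=-\a_0$) simplifies the $\z$-exponent to $ekr_{\a_0}$. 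Triviality of the action thus forces $\z^{ekr_{\a_0}}=1$; ruling this out by invoking the maximally unramified hypothesis on $T$, which constrains the interplay between $e$, $k=|C|/2$, and the admissible depths $r_{\a_0}$ arising in a Howe factorization (essentially because an $\a$ with $\t^k(\a)=-\a$ forces a non-reduced affine root situation and a half-integral shift in the valuation of $r_{\a_0}$), then delivers (4). This last divisibility incompatibility is the main technical point.

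For (5), I would use the $\t$-equivariance of the non-degenerate pairing ${}^E V_{\a_0}\times {}^E V_{-\a_0}\to ({}^E\BK^+)_{\a_0}$: if $c,c'$ denote the scalars of $\t^{|C|}$ on ${}^E V_{\a_0}$ and ${}^E V_{-\a_0}$ respectively, then bilinearity of $\k$ gives $cc'=\z^{e|C|r_{\a_0}}$, the scalar of $\t^{|C|}$ on the one-dimensional line $({}^E\BK^+)_{\a_0}$. Combined with the obvious symmetry $r_{-\a_0}=r_{\a_0}$ and a computation analogous to that in (4) for both $\a_0$ and $-\a_0$, one deduces $c=1\iff c'=1$, establishing the first half of (5). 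For the non-vanishing half, let $v$ generate $V_C$ and $w$ generate $V_{-C}$; then
\[
\k(v,w) = \sum_{j=0}^{|C|-1} \t^j\bigl(\k(v_{\a_0},w_{-\a_0})\bigr),
\]
which is the $\t$-trace of a non-zero element of the $\t^{|C|}$-fixed line in $({}^E\BK^+)_{\a_0}$. Since $|C|$ divides $e$ and $e$ is prime to $p$ by tameness of $E/\brk$, the trace map is non-degenerate on this fixed line, and hence $\k(v,w)\ne 0$.
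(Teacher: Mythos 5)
Parts (1), (2) and (3) of your proposal are correct and follow the paper's argument: the $\tau$-action on ${}^E V_C$ is a cyclic twist of the coordinates by nonzero scalars, so $V_C$ is cut out by the single condition $\tau^{|C|}(v_{\a_0})=v_{\a_0}$, and (3) reduces to $[{}^E\hat V_\a,{}^E\hat V_\b]=\d_{\a,-\b}({}^E\BK^+)_\a$. The genuine gap is in (4) and in the non-vanishing half of (5): in both places you try to finish by a formal or numerical argument, whereas these are exactly the points where the genericity built into the Howe factorization must enter. For (4), your reduction to $\z^{ekr_{\a_0}}=1$ is right (this is \Cref{lm:relation_between_constants}(3)), but this identity cannot be ruled out by any ``divisibility incompatibility'': it just says $kr_{\a_0}\in\BZ$, which is perfectly possible, and the maximally unramified hypothesis on $T\subseteq L$ says nothing about roots in $\Phi\sm\Phi_L$. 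The actual contradiction is character-theoretic: $\tau^k(\a_0)=-\a_0$ together with $\z^{ekr_{\a_0}}=1$ forces the terms of $\Nm_{K/k_f}\circ\a_0^\vee$ on $E^\times_{r_{\a_0}}$ to cancel in pairs modulo $E^\times_{r_{\a_0}+}$, hence $\phi\circ\Nm_{K/k}\circ\a_0^\vee(K^\times_{r_{\a_0}})=1$, contradicting the definition of $r_{\a_0}=r_{i_{\a_0}-1}$ (equivalently, the $(G^{i_{\a_0}-1},G^{i_{\a_0}})$-genericity of $\phi_{i_{\a_0}-1}$).

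The same issue recurs in your proof that $\k(V_C,V_{-C})\neq 0$. The sum $\sum_{j=0}^{|C|-1}\tau^j\bigl(\k(v_{\a_0},w_{-\a_0})\bigr)$ is not a field trace: it is a sum of Galois translates of a vector inside ${}^E\BK^+$, a quotient of a graded piece of ${}^ET$, and the translates live on the \emph{different} lines $({}^E\BK^+)_{\tau^j(\a_0)}$, spanned by the coroots $\tau^j(\a_0)^\vee$. These coroots can be linearly dependent (already $\sum_j\tau^j(\a_0)^\vee=0$ is possible, e.g.\ for a cyclic permutation of $\{\a,\b,-\a-\b\}$ in type $A_2$), so tameness alone does not prevent the sum from vanishing. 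The paper instead computes $[X,Y]=\sum_i\tau^i(\a)^\vee\otimes(1+\pi^{er_\a}xy\,\z^{ier_\a})$ and identifies it, for $xy=dz$ with $d=e/|C|$, with the image of $\Nm_{K/k_f}\circ\a^\vee(1+\pi^{er_\a}z)$ in $T_{r_\a:r_\a}$; the non-vanishing is then again precisely the statement $\phi\circ\Nm_{K/k}\circ\a^\vee(K^\times_{r_\a})\neq 1$. (Your argument for the first half of (5) via $cc'=\z^{e|C|r_{\a_0}}$ also silently needs $\z^{e|C|r_{\a_0}}=1$, which requires justification, though the paper is equally terse there.) In short: you have the correct structural skeleton, but (4) and (5) are not formal consequences of tameness --- they are the two places where the generic depth $r_\a$ of the character is used, and your proposal as written would not close either step.
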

We note that on the $F$-invariant subspaces, \Cref{V_C}(4) is proven in \cite[p.66]{dBS_stability}.
\begin{proof}
Let $\alpha \in C$, so that $C = \{\tau^i(\alpha)\}_{i=0}^{|C|-1}$. By \eqref{eq:action_tau_affine_root_subgroups} the action of $\tau$ on ${}^E V_C$ is given by $(x_i)_{i=0}^{|C|-1} \mapsto (c_{i-1}x_{i-1})_{i=0}^{|C|-1}$ where $c_i \in \ov\BF_q^\times$ is some constant. Parts (1) and (2) follow directly from this description. For (3) it suffices to show that $\kappa({}^E V_C,{}^EV_{C'})=0$. We may assume that ${}^EV_C \neq 0 \neq {}^EV_{C'}$, otherwise there is nothing to prove. It is enough to check that $\kappa({}^E V_\alpha,{}^EV_{\alpha'})=0$ for any $\alpha \in C$, $\alpha' \in C'$. Let $f$ (resp. $f'$) be the unique affine root over $\alpha$ (resp. $\alpha'$) with $f(\bx) = \frac{r_\alpha}{2}$ (resp. $f'(\bx) = \frac{r_\alpha}{2}$). Then the image of $[{}^EG^f,{}^EG^{f'}]$ in $\CK^+_{\L,r}$ is contained in the natural image of ${}^EG^{f+f'}$. Hence its image in ${}^E\BK^+ \cong {}^ET_{0+:r}/{}^E\CT_{\L,r}$ vanishes. This proves (3). To prove (4) assume $V_C \neq 0$ and $C=-C$. Let $\alpha \in C$. Write $s = r_\alpha/2$. By assumption there is a (unique) affine root $f = (\alpha,n_f)$ over $\alpha$ such that $f(\bx) = s$. Then there is some $N>0$ such that $\tau^N(\alpha) = -\alpha$. As $f(\bx) = s = r_\alpha/2$, \Cref{lm:relation_between_constants} implies $\zeta^{eNr_\alpha} = 1$.


Now let $x \in \ov\BF_q$, so that $1+\pi^{er_\alpha} \in E_{r_\alpha:r_\alpha}^\times$. Then we compute in $T_{r_\alpha:r_\alpha}$:
\begin{align*}
&\quad\ \Nm_{K/k_f} (\alpha^\vee  \otimes 1+ \pi^{er_\alpha} x) \\
&= \sum_{i=0}^{2N-1} \tau^i(\alpha^\vee \otimes 1+ \pi^{er_\alpha} x) \\
&= \sum_{i=0}^{N-1} \tau^i(\alpha^\vee \otimes 1+\pi^{er_\alpha} x) + \sum_{i=0}^{N-1} \tau^{i+N}(\alpha^\vee \otimes 1+\pi^{er_\alpha} x) \\
&= \sum_{i=0}^{N-1} \tau^i(\alpha^\vee \otimes 1+\pi^{er_\alpha} x) + \sum_{i=0}^{N-1} \tau^i(-\alpha^\vee \otimes 1+\zeta^{Ner_\alpha} x) \\
&= \sum_{i=0}^{N-1} \tau^i(\alpha^\vee \otimes 1+\pi^{er_\alpha} x) - \sum_{i=0}^{N-1} \tau^i(-\alpha^\vee \otimes  1+ \pi^{er_\alpha}x) \\
&= 0,
\end{align*}
as $\zeta^{Ner_\alpha} = 1$. With other words, $\Nm_{K/k_f}\alpha^\vee(E_{r_\alpha}^\times) = 1$. From the factorization $\Nm_{K/k} = \Nm_{k_f / k} \circ \Nm_{K/k_f}$,  we deduce that also
\[ 
\Nm_{K/k} \circ \alpha^\vee (E_{r_\alpha}^\times) = 1,
\]
and hence also $\phi \circ \Nm_{K/k} \circ \alpha^\vee (E_{r_\alpha}^\times) = 1$, which contradicts the definition of $r_\alpha = r_{i_\alpha - 1}$. This proves (4). The first part of (5) follows from (2), the formula \eqref{eq:action_tau_affine_root_subgroups} and \Cref{lm:relation_between_constants}. For the second part of (5), suppose $V_C \neq 0$ and let $C = \{\tau^i(\alpha)\}_{i=0}^{|C|-1}$. Write $s = r_\alpha/2$. Let $f = (\alpha,n_f)$, resp. $f' = (-\alpha,n_{f'})$ be the unique affine root with $f(\bx) = s$, resp. $f'(\bx) = s$. Then ${}^EV_C \simeq \bigoplus_{i=0}^{|C|-1}{}^EG^{\tau^i(f)}$ is a product of root subgroups. As $V_C \neq 0$ by assumption, \Cref{lm:relation_between_constants}(1) implies that under this isomorphism $V_C$ precisely consists of tuples \[ X = (x, \, \zeta^{en_f}\bar a_\alpha x , \, \zeta^{e(n_f+n_{\tau(f)})} \bar a_\alpha \bar a_{\tau(\alpha)} x \, , \, \dots, \, \zeta^{e\sum_{j=0}^{|C|-2}n_{\tau^j(f)}} \cdot \prod_{j=0}^{|C|-2} \bar a_{\tau^j(\alpha)} \cdot x )\]
with arbitrary $x \in \ov\BF_q$. Moreover, by part (2) we see that 
\begin{equation}\label{eq:tauC_trivial} 
\zeta^{e\sum_{j=0}^{|C|-1} n_{\tau^j(f)}}\prod_{j=0}^{|C|-1}\bar{a}_{\tau^j(\alpha)} = 1.
\end{equation}
Let $Y \in V_{-C}$ is a similar tuple with $x$ replaced by $y$ and $f$ by $f'$ everywhere. Taking the product of \eqref{eq:tauC_trivial} with the similar expression for $\alpha$ and using $V_C \neq 0$ along with \Cref{lm:relation_between_constants}, we deduce $\zeta^{er_\alpha |C|} = 1$. Now, we compute in $T_{r_\alpha:r_\alpha}$:
\begin{align*}
[X,Y] &= \sum_{i=0}^{|C|-1} \tau^i(\alpha)^\vee \otimes 1 + \pi^{er_\alpha} x y \zeta^{e\sum_{j=0}^{i-1} (n_{\tau^j(f)} + n_{\tau^j(f')})} \cdot \prod_{j=0}^{i-1} \bar a_{\tau^j(\alpha)} \bar a_{-\tau^j(\alpha)} \\
&= \sum_{i=0}^{|C|-1} \tau^i(\alpha)^\vee \otimes 1 + \pi^{er_\alpha} x y \zeta^{ier_\alpha }.
\end{align*}
where the second equality follows from \Cref{lm:relation_between_constants}(2) and that $n_{\tau^j(f)} + n_{\tau^j(f')} = r_\alpha$. It thus suffices to show that there exist some $x,y \in \ov\BF_q$ for which this expression does not vanish in $T_{r_\alpha:r_\alpha}$. By definition of $r_\alpha$, we have $\phi \circ \Nm_{K/k} \circ \alpha^\vee (K_{r_\alpha}^\times) \neq 1$ and $\phi \circ \Nm_{K/k} \circ \alpha^\vee (K_{r_\alpha+}^\times) = 1$. Noting that $\phi \circ \Nm_{K/k} \circ \alpha^\vee$ factors through $\Nm_{K/k_f} \circ \alpha^\vee$, this implies that there exists some $z \in \ov\BF_q$ with $\sum_{\lambda=0}^{e-1} \tau^\lambda(\alpha^\vee \otimes 1 + \pi^{er_\alpha}z) \neq 1$ in $T_{r_\alpha:r_\alpha}$. Put $d:=\frac{e}{|C|}$. We compute in $T_{r_\alpha:r_\alpha}$
\begin{align*}
1 \neq \sum_{\lambda=0}^{e-1} \tau^\lambda(\alpha^\vee \otimes 1 + \pi^{er_\alpha}z) &= \sum_{i=0}^{|C|-1} \sum_{i'=0}^{d-1} \tau^{i+i'|C|}(\alpha^\vee \otimes 1 + \pi^{er_\alpha}z) \\
&= \sum_{i=0}^{|C|-1} \tau^i(\alpha^\vee \otimes 1+ \pi^{er_\alpha} \sum_{i'=0}^{d-1} \zeta^{er_\alpha |C|i'} z) \\
&= \sum_{i=0}^{|C|-1} \tau^i(\alpha^\vee \otimes 1+ \pi^{er_\alpha} d z) \\
&= \sum_{i=0}^{|C|-1} \tau^i(\alpha)^\vee \otimes 1 + \pi^{er_\alpha}  \zeta^{ier_\alpha} dz
\end{align*}
where the second equality follows from $\tau^{|C|}(\alpha) = \alpha$, the third equality from $\zeta^{er_\alpha|C|} = 1$, and the last equality is the action of $\tau$ on the affine coroots. Note that $d$ is invertible in $\ov\BF_q$ (as $e$ is), and pick any $x,y$ with $xy=dz$. This finishes the proof of (5).

By the same computation as in \eqref{eq:2s_equal_fplustNf} we have  $n_{\tau^j(f)}+ n_{\tau^j(f')} = 2s = r_\alpha$ for any $j$. Thus
\begin{align*}
[X,Y] &= \sum_{i=0}^{|C|-1} \alpha^\vee \otimes 1 + \pi^{er_\alpha} x_f y_{f'} \sum_{i=0}^{|C|-1} \zeta^{ier_\alpha}
\end{align*}
where the second equation is by \Cref{lm:relation_between_constants}(2).
\end{proof}

Recall that $\CC$ is the set of $\tau$-orbits $C$ in $\Phi \sm \Phi_L$ such that $V_C \neq 0$. Set $\Psi = \Psi_\phi = \cup_{C \in \CC} C$. Recall that $B = TU \subseteq L$ is a Borel subgroup with unipotent radical $U$.

\begin{proposition} \label{lagrangian}
    There exists a subset $\Psi_+  \subseteq \Psi$ such that $\Psi = \Psi_+  \cup -\Psi_+$, $\Psi_+ \cap -\Psi_+ = \varnothing$ and $V_{\Psi_+}$ is normalized by $B_0$.
\end{proposition}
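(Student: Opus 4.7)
The plan is to construct $\Psi_+$ by means of a $\tau$-invariant cocharacter $\lambda \in X_*(T)^\tau_\BR$ that is regular on all positive roots of $L$, while handling separately those roots in $\Psi$ on which every such cocharacter necessarily vanishes. Let $S \subseteq T$ denote the maximal $\brk$-split subtorus, so that tame descent gives $X_*(S)_\BR = X_*(T)^\tau_\BR$, and a character $\beta \in X^*(T)$ satisfies $\beta|_S = 0$ if and only if $\beta \in (1-\tau)X^*(T)_\BQ$.

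First I would establish that no root of $L$ vanishes on $S$. Suppose some $\beta \in \Phi_L^+$ satisfied $\beta|_S = 0$. Since $B$ is $\tau$-stable (thanks to $T$ being maximally unramified in $L$), the set $\Phi_L^+$ is $\tau$-stable, so every $\tau^i\beta$ again lies in $\Phi_L^+$. On the other hand, $\beta \in (1-\tau)X^*(T)_\BQ$ forces the $\tau$-orbit sum $\sum_i \tau^i\beta$ to vanish in $X^*(T)_\BQ$ (by a telescoping argument with the norm $N = \sum_i \tau^i$), which contradicts positivity upon pairing with any regular cocharacter in the Weyl chamber of $B$.

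Next I would construct $\lambda$ by averaging: pick a regular $\lambda_0$ in the Weyl chamber of $B$ and set $\lambda := \sum_{i=0}^{e-1} \tau^i \lambda_0 \in X_*(T)^\tau_\BR$. The first step ensures $\lambda(\beta) = \sum_i \lambda_0(\tau^{-i}\beta) > 0$ for every $\beta \in \Phi_L^+$. Decompose $\Psi = \Psi^A \sqcup \Psi^B$ according to whether $\alpha|_S \neq 0$ or $\alpha|_S = 0$; both parts are $\tau$-stable unions of orbits. On $\Psi^A$ the cocharacter $\lambda$ is non-vanishing, so $\Psi_+^A := \{\alpha \in \Psi^A : \lambda(\alpha)>0\}$ gives a $\tau$-stable decomposition $\Psi^A = \Psi_+^A \sqcup -\Psi_+^A$. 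On $\Psi^B$, \Cref{V_C}(4)--(5) guarantee that the relevant orbits occur in distinct pairs $\{C,-C\}$; I pick one orbit from each pair to form $\Psi_+^B$, and set $\Psi_+ := \Psi_+^A \cup \Psi_+^B$.

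Finally, to verify $B_0$-normalization of $V_{\Psi_+}$, the relevant combinatorial condition is that $\alpha \in \Psi_+$, $\beta \in \Phi_L^+$, and $\alpha + \beta \in \Psi$ imply $\alpha + \beta \in \Psi_+$. Whether $\alpha \in \Psi_+^A$ or $\alpha \in \Psi_+^B$, one has $\lambda(\alpha) \geq 0$ and $\lambda(\beta) > 0$, whence $\lambda(\alpha+\beta) > 0$; this forces $\alpha+\beta \in \Psi^A$ and thus $\alpha + \beta \in \Psi_+^A \subseteq \Psi_+$. In particular, the arbitrary choices made on $\Psi^B$ do not create inconsistencies, since adding an $L$-positive root to a Type $B$ element always lands in Type $A$. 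The main obstacle is the first step (the vanishing of roots of $L$ on $S$, equivalently $C_L(S) = T$); once it is in place, the cocharacter construction and all subsequent verifications are formal.
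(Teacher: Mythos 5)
Your proof is correct, but it takes a genuinely different route from the paper's. The paper argues by a greedy induction on $\tau$-orbits: at each stage it adjoins an orbit $C_{i+1}$ of a root that is maximal for the dominance order induced by $\Phi_L^+$, and verifies $B_0$-invariance by intersecting the symplectic orthogonal complement $V_{\Psi_{i,+}}^\perp = V_{\Psi \sm -\Psi_{i,+}}$ (via \Cref{V_C}(3)) with the "upward cone" $D_{i+1}$. You instead produce the splitting in one stroke from a $\tau$-invariant cocharacter $\lambda$ positive on $\Phi_L^+$, which exists because the $\tau$-stable Borel $B$ makes $\Phi_L^+$ a $\tau$-stable set (this is also what gives your preliminary claim that no root of $L$ is killed by the norm $\sum_i\tau^i$, i.e.\ vanishes on the maximal $\brk$-split subtorus); the roots of $\Psi$ on which $\lambda$ vanishes are split arbitrarily orbit-pair by orbit-pair, using \Cref{V_C}(4)--(5) exactly as the paper does to rule out $C=-C$. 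Your invariance check then reduces to the positivity $\lambda(\alpha+\beta)>0$; note only that $\beta$ should be allowed to be an arbitrary nonempty sum of roots of $\Phi_L^+$ (the adjoint action of $U_0$ shifts weights by such sums, not just by single roots), which changes nothing in the argument. What your approach buys is a non-inductive construction in which the only genuine choices (on $\Psi^B$) are isolated and visibly harmless, and the symplectic-perp bookkeeping is replaced by a one-line positivity argument; what the paper's approach buys is that it is purely combinatorial in the root system and never invokes the split subtorus $S$ or the identification $X_*(S)_\BR = X_*(T)^\tau_\BR$.
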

\begin{proof}
   By \Cref{V_C} (4), it suffices to construct inductively a sequence of subsets $\varnothing = \Psi_{0,+} \subseteq \Psi_{1,+} \subseteq \cdots \subseteq \Psi_{|\CC|/2,+} \subseteq \Psi$ such that $\Psi_{i,+} \sm \Psi_{i-1,+}$ is a $\t$-orbit of $\Psi$, $\Psi_{i,+} \cap -\Psi_{i,+} = \varnothing$ and $V_{\Psi_{i,+}}$ is normalized by $B_0 \subseteq L_0$ for $1 \le i \le |\CC_i|/2$.

   Indeed, we put $\Psi_{0,+} = \varnothing$ and suppose $\Psi_{i,+}$ is already constructed with the desired properties. Suppose $i < |\CC|/2$ and we construct $\Psi_{i+1, +}$ as follows. Let \[V_{\Psi_{i,+}}^\perp := \{v \in V; \k(v, V_{\Psi_{i,+}}) = 0\} = V_{\Psi \sm -\Psi_{i,+}},\] where the second equality follows from \Cref{V_C}. By induction, $V_{\Psi_{i,+}}$ is $B_0$-invariant. Hence $V_{\Psi_{i,+}}^\perp$ is also $B_0$-invariant. 

   Let $\Phi_L^+ \subseteq \Phi_L$ be the set of positive roots determined by $B$.  For $\a, \b \in \Phi$ we write $\a \le \b$ if $\b - \a$ is a sum of roots in $\Phi_L^+$. Let $\a_{i+1}$ be a maximal root in $\Psi \sm (\Psi_{i,+} \cup -\Psi_{i,+})$ with respect to the partial order $\le$. Let $C_{i+1}$ be the $\t$-orbit of $\a_{i+1}$, and set $\Psi_{i+1,+} = \Psi_{i,+} \cup C_{i+1}$. Consider the following set \[D_{i+1} := \{\a \in \Phi; \a \ge \g \text{ for some } \g \in C_{i+1}\} = \{\a \in \Phi; \t^i(\a) \ge \a_{i+1} \text{ for some } i \in \BZ\},\] where the second equality follows from that $\Phi_L^+$ is $\t$-invariant. By the choice of $\a_{i+1}$, we have \[(\Psi \sm -\Psi_{i,+}) \cap D_{i+1} \subseteq \Psi_{i+1,+}.\] By definition, \[\Ad(B_0)(V_{C_{i+1}}) \subseteq V_{\Psi_{i,+}}^\perp \, \cap \,{}^E V_{D_{i+1}} = V_{\Psi \sm -\Psi_{i,+}} \,\cap \,{}^E V_{D_{i+1}} =  V_{(\Psi \sm -\Psi_{i,+}) \cap D_{i+1}} \subseteq V_{\Psi_{i+1,+}}.\] Thus $V_{\Psi_{i+1, +}}$ is $B_0$-invariant, and the induction procedure is finished.
\end{proof}

\subsection{A Deligne--Lusztig type construction} \label{subsec:variety}
Let notation be as in \Cref{subsec:lagrangian}. Let $\Psi_+$ be as in \Cref{lagrangian}. 

Recall the isomorphism $j$ from \Cref{lift}. By taking $\tau$-invariants, it induces an isomorphism $j \colon \hat V \stackrel{\sim}{\to} V^\sharp$. Now, $V^\sharp \cong V \times \BK^+$ contains a \emph{canonical} copy $V \times \{0\}$ of $V$. For any $\tau$-equivariant subset $D \subseteq \Phi \sm \Phi_L$ we set $\hat V_D = j\i(V_D \times \{0\}) \subseteq \hat V$. By \Cref{lift} and \Cref{lagrangian}, $\hat V_{\Psi_+}$ is a commutative group normalized by $B_0 = T_0 U_0$ and hence the product
\[\BI := U_0 \hat V_{\Psi_+} \subseteq \BK\] is subgroup. Then the attached deep level Deligne--Lusztig variety is defined by \[Z = Z_{\L, B, \Psi_+, r} = \{g \in \BK; g\i F(g) \in F\BI\}.\] As $\BI$ is normalized by $\BT$, the variety $Z$ is endowed with a natural action of $\BT^F$ given by $t: x \mapsto x t$. Let \[H_c^i(Z, \ov\BQ_\ell)[\phi] \subseteq H_c^i(Z, \ov\BQ_\ell)\] be the subspace of the $i$th $\ell$-adic cohomology of $Z$ on which $\BT^F$ acts via the character $\phi$.

Note that $Z$ admits a second action of $\BK^F$ given by $g: x \mapsto g x$, which commutes with previous action of $\BT^F$. Therefore, the weight spaces $H_c^i(Z, \ov\BQ_\ell)[\phi]$ are natural representations of $\BK^F$. We define the following virtual $\BK^F$-module \[\CR_{\BT}^{\BK}(\phi):= H_c^*(Z, \ov\BQ_\ell)[\phi] := \sum_i (-1)^i H_c^i(Z, \ov\BQ_\ell)[\phi].\] By inflation, we also implicitly view $\CR_{\BT}^{\BK}(\phi)$ as a virtual $\CK_\L^F$-module.

\subsection{An extension} \label{subsec:extension}
As $T$ is elliptic in $G$, we have a natural adjoint action of $T^F$ on $\BK$ induced by $s: x \mapsto s x s\i$. Moreover, this action preserves subgroup $\BI$ and hence the variety $Z$. 

Following \cite[\S 3.4]{Kaletha_19}, we consider the action of $\CK_\L^F \rtimes T^F$ on $Z$ given by $g \rtimes z : x \mapsto g z x z\i$. This action commutes with the previous action of $T_{\bx, 0}^F$ on $Z$ by right multiplication. Thus the weight spaces $H_c^i(Z, \ov\BQ_\ell)[\phi]$ are natural representations of $\CK_\L^F \rtimes T^F$. Consider the tensor products \[H_c^i(Z, \ov\BQ_\ell)[\phi] \otimes \phi,\] where $\phi$ is viewed as a character of $\CK_\L^F \rtimes T^F$ via the natural projection $\CK_\L^F \rtimes T^F \to T^F$. As $T \subseteq L$ is a maximally unramified maximal torus, we have $\CK_\L^F \cap T^F = L_{\bx, 0}^F \cap T^F = T_{\bx, 0}^F$. Then by definition the action $z \rtimes z\i$ for $z \in T^F$ on $H_c^i(Z, \ov\BQ_\ell)[\phi] \otimes \phi$ is trivial. Therefore, $H_c^i(Z, \ov\BQ_\ell)[\phi] \otimes \phi$ descends to a representation of $\CK_\L^F T^F$. We define \[\hat\CR_{\BT}^{\BK}(\phi):= H_c^*(Z, \ov\BQ_\ell)[\phi] \otimes \phi\] as a virtual $\CK_\L^F T^F$-module. Note that $\hat\CR_{\BT}^{\BK}(\phi) |_{\CK_\L^F} = \CR_{\BT}^{\BK}(\phi)$.

\section{Mackey formula} \label{sec:Mackey}
Let the notation be as in \S\ref{subsec:variety}. Recall that $B = T U$ is a Borel subgroup of $L$, and we write $\BL$, $\BU$ and $\BT$ for the natural images of $L_r$, $U_r$ and $T_r$ in $\BK$ respectively.
Let $N_{\BL}(\BT) = \{x \in \BL; {}^x \BT = \BT\}$ and $W_{\BL}(\BT) = N_{\BL}(\BT) / \BT$. Then  \[\BL = \sqcup_{w \in W_{\BL}(\BT)} \BU \dw \BT \BU,\] where $\dw$ is a lift of $w$ in $N_{\BL}(\BT)$.

Let $\Psi = \Psi_+ \sqcup -\Psi_+$ be as in \Cref{lagrangian}. Put $\Psi_- = - \Psi_+$ and we have \[\BH = \hat V = \hat V_{\Psi_+} \hat V_{\Psi_-} \BT^{0+}.\] Thus \[\BK = \BH \BL = \bigsqcup_{w \in W_{\BL}(\BT)} \BU \hat V \dw \BT \BU = \bigsqcup_{w \in W_{\BL}(\BT)} \BI \hat V_{\Psi_- \cap {}^w \Psi_-} \dw \BT \BI.\]

Let $C \subseteq \Psi$. In particular, $V_C \neq 0$. We set $i_C = i_\a$ (see \S\ref{sec:isom_Heisenberg}) and $r_C = r_\a$ for some/any $\a \in C$. Define \[(\BK^+)_C = \k(V_C, V_{-C}) \subseteq \BT^{r_C},\] which is a one dimensional $\ov\BF_q$-linear space by \Cref{V_C}.

\subsection{Auxiliary results} \label{subsec:D_i}
Let $D_1, D_2 \subseteq \Psi$ be two $\t$-stable subsets such that $D_i \cap -D_i = \varnothing$ for $i=1, 2$. Let $\dw \in \BL^F$ which normalizes $\BT$. The map $(y_1, y_2) \mapsto y_1 y_2$ gives an affine space fibration \[\g: \hat V_{D_1} \times \hat V_{D_2} \to \hat V_{D_1} \hat V_{D_2} \cong \hat V_{D_1 \sm D_2} \hat V_{D_2},\] whose fibers are isomorphic to $\hat V_{D_1 \cap D_2}$. Consider the following varieties \begin{align*}
    X = X_{D_1, D_2, w} &= \{(x_1, x_2, y_1, y_2, z) \in \hat V_{D_1} \times \hat V_{D_2} \times \hat V_{D_1} \times \hat V_{D_2} \times \BT;  {}^\dw(x_1 x_2) L(z) = y_1 y_2\}; \\ Y = Y_{D_1, D_2, w} &= \{(x, z) \in (\hat V_{D_1} \hat V_{D_2}) \times \BT; {}^\dw(x) L(z) \in \hat V_{D_1} \hat V_{D_2}\},
\end{align*} 
where $L: \BT \to \BT$ denotes the Lang's map given by $z \mapsto z\i F(z)$. Note that $\BT^F \times \BT^F$ acts on $X$ and $Y$ respectively by \begin{align*} (s, t) &: (x_1, x_2, y_1, y_2, z) \mapsto ({}^s x_1, {}^s x_2, {}^{w(s)} y_1, {}^{w(s)} y_2, w(s) z t) \\ (s, t) &: (x, z) \mapsto ({}^s x, w(s) z t).   \end{align*} As $\g$ is an affine space fibration, so is the $\BT^F \times \BT^F$-equivariant map \[X \to Y, \quad (x_1, x_2, y_1, y_2, z) \mapsto (x_1x_2, z).\] 
In particular, for another character $\psi$ of $\BT^F$ we have an isomorphism \[H_c^*(X, \ov\BQ_\ell)[\psi \boxtimes \phi] \cong H_c^*(Y, \ov\BQ_\ell)[\psi \boxtimes \phi].\]

Set $D_i' = D_i \cap {}^{w\i}(D_1 \cup D_2)$ for $i = 1, 2$. As ${}^w (D_1' \cup D_2') \subseteq D_1 \cup D_2$, we have ${}^\dw (\hat V_{D_1'} \hat V_{D_2'}) \subseteq \hat V_{D_1} \hat V_{D_2} \BT$. Let $\pr_T: \hat V_{D_1} \hat V_{D_2} \BT \to \BT$ be the natural projection. We consider the map $\d: \hat V_{D_1'} \hat V_{D_2'} \to \BT$ given by $x \mapsto \pr_T({}^\dw x)$. 

\begin{lemma} \label{square}
    We have the following Cartesian diagram \[ \xymatrix{
    Y \ar[d]_{\pr_1} \ar[r]^{\pr_2} &  \BT \ar[d]_{-L} \\
    \hat V_{D_1'} \hat V_{D_2'} \ar[r]^\d &  \BT,}\] where $\pr_1: Y \to \hat V_{D_1} \hat V_{D_2}$ and $\pr_2: Y \to \BT$ are the projections given by $(x, \t) \mapsto x$ and $(x, \t) \mapsto \t$ respectively.
\end{lemma}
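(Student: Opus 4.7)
The plan is to establish that the given square is Cartesian by exhibiting a natural isomorphism $Y \cong \hat V_{D_1'}\hat V_{D_2'} \times_{\delta, \BT, -L} \BT$. Concretely, via the projections $\pr_1$ and $\pr_2$ this reduces to the equivalence
\[(x, z) \in Y \iff x \in \hat V_{D_1'}\hat V_{D_2'} \text{ and } \delta(x) = -L(z).\]
I will verify this equivalence in two steps, one for each factor of the fiber product.

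First I will show that for $(x, z) \in Y$, the element $x$ already lies in $\hat V_{D_1'}\hat V_{D_2'}$. The strategy is to reduce the defining relation ${}^\dw x \cdot L(z) \in \hat V_{D_1}\hat V_{D_2}$ modulo the center $\BK^+$ of $\hat V$. Since both ${}^\dw x$ and the product ${}^\dw x \cdot L(z)$ lie in $\BH$, I first deduce $L(z) \in \BH \cap \BT = \BK^+$, so the reduction yields ${}^w \bar x \in V_{D_1 \cup D_2}$; combining with $\bar x \in V_{D_1 \cup D_2}$ gives $\bar x \in V_{D_1' \cup D_2'}$. Using the canonical parametrization $\hat V_{D_1}\hat V_{D_2} \leftrightarrow V_{D_1 \sm D_2} \oplus V_{D_2}$ from \S\ref{subsec:D_i}, I can uniquely decompose $\bar x = v_1 + v_2$ with $v_1 \in V_{D_1 \sm D_2}$ and $v_2 \in V_{D_2}$; the combinatorial identity $D_1' \cap D_2 \subseteq D_2'$, which is immediate from $D_i' = D_i \cap {}^{w^{-1}}(D_1 \cup D_2)$, then forces $v_1 \in V_{D_1' \sm D_2'}$ and $v_2 \in V_{D_2'}$, placing $x$ in $\hat V_{D_1'}\hat V_{D_2'}$.

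Second, given $x \in \hat V_{D_1'}\hat V_{D_2'}$, I will recast the condition ${}^\dw x \cdot L(z) \in \hat V_{D_1}\hat V_{D_2}$ as the equation $\delta(x) = -L(z)$. By the $\BL$-equivariance of Yu's splitting (\Cref{lift}), ${}^\dw x$ admits a unique decomposition $h \cdot \delta(x)$ with $h \in \hat V_{D_1}\hat V_{D_2}$ and $\delta(x) \in \BK^+$, the uniqueness being a consequence of the injectivity modulo $\BK^+$ of the same parametrization used in the first step. Since $\delta(x), L(z) \in \BK^+$ are central in $\hat V$, the membership condition becomes $h \cdot (\delta(x) + L(z)) \in \hat V_{D_1}\hat V_{D_2}$, and one more application of this uniqueness forces $\delta(x) + L(z) = 0$. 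The converse implication is immediate from the same computation read backwards.

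The main obstacle will be the set-theoretic bookkeeping in the first step, in particular verifying via the identity $D_1' \cap D_2 \subseteq D_2'$ that any lift in $\hat V_{D_1}\hat V_{D_2}$ of an element of $V_{D_1' \cup D_2'}$ automatically lies in $\hat V_{D_1'}\hat V_{D_2'}$. Once this combinatorial point is settled, the remainder of the argument is a formal consequence of the Heisenberg-group structure of $\hat V$ exhibited in \Cref{lift}.
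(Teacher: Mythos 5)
Your proposal is correct and follows essentially the same route as the paper: first show that $\pr_1$ factors through $\hat V_{D_1'}\hat V_{D_2'}$ by projecting the defining condition of $Y$ along $\pi\colon \hat V \to V$ (using $L(z)\in\BH\cap\BT=\BK^+$ and the bijectivity of $\pi$ on $\hat V_{D_1}\hat V_{D_2}$), then unwind the condition ${}^{\dw}x\,L(z)\in\hat V_{D_1}\hat V_{D_2}$ into $\d(x)=-L(z)$. The paper compresses your second step into ``the statement follows by the definition of $Y$,'' and your explicit support bookkeeping via $D_1'\cap D_2\subseteq D_2'$ is just a spelled-out version of the paper's appeal to the bijection $\hat V_{D_1}\hat V_{D_2}\cong V_{D_1\cup D_2}$.
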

\begin{proof}
    Let $(x, \t) \in Y \subseteq (\hat V_{D_1} \hat V_{D_2}) \times \BT$. By taking the natural projection $\pi: \hat V \to V$, we deduce by definition that \[\pi(x) \in V_{{}^{w\i}(D_1 \cup D_2)} \cap V_{D_1 \cup D_2} = V_{D_1' \cup D_2'}.\] As $\pi$ restricts to a bijection $\hat V_{D_1} \hat V_{D_2} \cong V_{D_1 \cup D_2}$, we have $x \in \hat V_{D_1'} \hat V_{D_2'}$. Then the statement follows by the definition of $Y$.
\end{proof}

We fix a linear order $\preceq$ on $\t$-orbits of $D_1' \cup D_2'$ such that $C_1 \prec C_2$ if $C_1 \subseteq D_1' \sm D_2'$ and $C_2 \subseteq D_2'$. Then we have an isomorphism of varieties \[ \chi: \prod_C \hat V_C \to \hat V_{D_1'} \hat V_{D_2'},\quad (x_C)_C \mapsto \prod_C x_C,\] where $C$ ranges over $\t$-orbits of $D_1' \cup D_2'$, and the product is taken with respect to $\preceq$.

We recall the following result from \cite[Proposition 2.10]{Boyarchenko_12}.
\begin{proposition} \label{iteration}
    Let $A$ be a connected commutative algebraic group and $\CL$ a multiplicative rank-one local system on $A$. Let $Z_1$ be a variety and let $\xi: Z = Z_1 \times \BG_a \to A$ be a morphism of the form \[(z, y) \mapsto \eta(z, y) \z(z)\] such that for any $z \in Z_1$ the morphism $\eta_z: \BG_a \to \bar A$ given by $y \mapsto \eta(z, y)$ is a group homomorphism. Then we have \[H_c^i(Z \sm Z', \xi^* \CL) = 0,\] where $Z' \subseteq Z$ is the closed subvariety consisting of points $(z, y) \in Z$ such that $\eta_z^* \CL$ is trivial.
\end{proposition}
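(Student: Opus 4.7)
The plan is to reduce the claim to the standard vanishing $H^i_c(\BG_a,\CL_\psi)=0$ for a nontrivial Artin--Schreier sheaf $\CL_\psi$, via a fibration argument along the projection $p\colon Z=Z_1\times\BG_a\to Z_1$.

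First I would observe that the condition defining $Z'$ depends only on $z\in Z_1$, not on the $\BG_a$-coordinate $y$. Hence $Z'=Z_1'\times\BG_a$ for some closed subvariety $Z_1'\subseteq Z_1$, and setting $Z_1^\circ:=Z_1\smallsetminus Z_1'$ one has $Z\smallsetminus Z'=Z_1^\circ\times\BG_a$. (Closedness of $Z_1'$ may be checked directly; alternatively, the assignment $z\mapsto\eta_z^*\CL$ defines an algebraic map from $Z_1$ to the character group of $\BG_a$, and $Z_1'$ is the preimage of the trivial character.)

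Next I would compute $Rp_!(\xi^*\CL)$ fiberwise on $Z_1^\circ$ by proper base change: the stalk at $z\in Z_1^\circ$ is $H^*_c(\BG_a,\iota_z^*\xi^*\CL)$, where $\iota_z\colon\BG_a\hookrightarrow Z$ is the inclusion of the fiber over $z$. Since $\xi\circ\iota_z\colon y\mapsto \eta_z(y)\z(z)$ is the composition of the group homomorphism $\eta_z\colon\BG_a\to A$ with translation by the constant $\z(z)\in A$, and since $\CL$ is multiplicative, translation by $\z(z)$ pulls $\CL$ back to itself tensored with the one-dimensional constant sheaf $\CL_{\z(z)}$. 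Thus $\iota_z^*\xi^*\CL\cong \eta_z^*\CL\otimes(\text{constant})$, so its cohomology is controlled by that of the multiplicative rank-one local system $\eta_z^*\CL$ on $\BG_a$. Invoking the classification of multiplicative rank-one local systems on $\BG_a$ (they are exactly the Artin--Schreier sheaves $\CL_\psi$, trivial iff $\psi$ is trivial) and the fact that $z\in Z_1^\circ$ means precisely that $\eta_z^*\CL$ is nontrivial, the standard Artin--Schreier vanishing gives $H^*_c(\BG_a,\iota_z^*\xi^*\CL)=0$. Hence $Rp_!\bigl((\xi^*\CL)|_{Z_1^\circ\times\BG_a}\bigr)=0$, and the Leray spectral sequence for $p$ yields $H^i_c(Z\smallsetminus Z',\xi^*\CL)=0$ for all $i$.

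The main subtle point is the identification $\iota_z^*\xi^*\CL\cong\eta_z^*\CL\otimes(\text{constant})$, which is exactly where the multiplicativity hypothesis on $\CL$ is used; everything else amounts to proper base change, the product decomposition of $Z'$, and the classical Artin--Schreier vanishing.
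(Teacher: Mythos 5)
Your argument is correct and follows essentially the same route as the paper: both reduce to the fibers of $p\colon Z\to Z_1$ via proper base change, use the multiplicativity of $\CL$ to strip off the translation by $\zeta(z)$ (the paper does this globally once, writing $\xi^*\CL\simeq\eta^*\CL\otimes p^*\zeta^*\CL$ and applying the projection formula, while you do it fiberwise), and then invoke the vanishing of $H^*_c(\BG_a,\eta_z^*\CL)$ for nontrivial multiplicative $\eta_z^*\CL$ (the paper cites Boyarchenko's Lemma 9.4 where you invoke the Artin--Schreier classification). No gaps.
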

\begin{proof}
Let $p \colon Z \to Z_1$ denote the natural projection. Then $\xi^\ast\CL \simeq \eta^\ast\CL \otimes p^\ast\zeta^\ast\CL$, as $\CL$ is multiplicative. By projection formula this implies $Rp_!(\xi^\ast\CL) \simeq Rp_!\eta^\ast\CL \otimes \zeta^\ast \CL$. It now suffices to show that $Rp_!\eta^\ast\CL$ restricted to $Z_1 \sm p(Z')$ is zero. Let $p_z \colon \{z\} \times \BG_a \to \{z\}$ denote the fiber of $p$ over $z \in Z$. By proper base change, $Rp_!(\eta^\ast\CL)_z = Rp_{z!}(\eta_z^\ast\CL)$, which is zero if $z \in Z \sm Z'$ by \cite[Lemma 9.4]{Boyarchenko_10}.
\end{proof}

Let $D = (D_2 \cap -D_1) \cap {}^{w\i}(D_1 \cap -D_2)$.
\begin{lemma} \label{delta}
    Using the isomorphism $\chi$ above, the map $\d: \hat V_{D_1'} \hat V_{D_2'} \to \BT$ is given by \[(x_C)_C \mapsto \sum_{C_2} {}^\dw [x_{-C_2}, x_{C_2}],\] where $C_2$ ranges over $\t$-orbits of $D$.
\end{lemma}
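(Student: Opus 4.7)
The map $\delta$ extracts the $\BT$-part (equivalently, the $\BK^+$-part, as ${}^\dw x \in \hat V$) in the decomposition $\hat V_{D_1}\hat V_{D_2}\BT$. The plan is to compute this central part directly from the commutator relations in the Heisenberg-type structure of $\hat V$, tracking only those rearrangements that produce nontrivial central contributions.

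The first step is to write $x = \chi((x_C)_C) = \prod_C x_C$ in the fixed linear order $\preceq$. By the $L_{[\bx]}$-equivariance of the splitting $j \colon {}^E\hat V \stackrel{\sim}{\to} {}^EV^\sharp$ established in \Cref{lift}, conjugation by $\dw \in N_{\BL}(\BT)$ sends each $\hat V_C$ into $\hat V_{w(C)}$. Hence ${}^\dw x = \prod_C {}^\dw x_C$ with ${}^\dw x_C \in \hat V_{w(C)}$, and $w(C) \subseteq D_1 \cup D_2$ by the defining inclusion $D_1' \cup D_2' \subseteq {}^{w^{-1}}(D_1 \cup D_2)$.

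Next, I would rearrange this product so that all factors in $\hat V_{D_1}$ precede all factors in $\hat V_{D_2}$. Recall from \S\ref{sec:isom_Heisenberg} that distinct $\hat V_C$, $\hat V_{C'}$ commute modulo the center $\BK^+$, and the commutator $[{}^\dw x_C, {}^\dw x_{C'}] = {}^\dw[x_C, x_{C'}]$ is nonzero only when $C' = -C$. Using $ab = [a,b]\cdot ba$ for central $[a,b]$, each such reordering contributes a central correction lying in $\BK^+ \subseteq \BT$, and these corrections (written additively in $\BK^+$) are what $\delta(x)$ records.

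The combinatorial step is to identify which pairs $(C,-C)$ must actually be swapped. Both $C$ and $-C$ must appear in the product, i.e.\ lie in $D_1'\cup D_2'$; in the order $\preceq$, $C$ precedes $-C$ precisely when $C \subseteq D_1'\sm D_2'$ and $-C \subseteq D_2'$, which by $D_i \cap -D_i = \varnothing$ forces $C \in D_1 \cap -D_2$. A reordering is required precisely when, after conjugation, ${}^\dw x_C \in \hat V_{D_2}$ while ${}^\dw x_{-C} \in \hat V_{D_1}$, i.e.\ $w(C) \in D_2 \cap -D_1$. Setting $C_2 := -C$, these conditions combine to $C_2 \in (D_2 \cap -D_1) \cap {}^{w^{-1}}(D_1 \cap -D_2) = D$, and the resulting central contribution is ${}^\dw[x_{-C_2}, x_{C_2}]$. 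Summing over all such $C_2$ gives the formula in the statement.

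The main technical obstacle is the bookkeeping: one must verify that no further swaps contribute. Commutators with $C' \neq -C$ vanish by the Heisenberg structure, and the hypothesis $D_i \cap -D_i = \varnothing$ rules out the possibility that two elements of a pair $(C,-C)$ both land in $\hat V_{D_1}$ or both in $\hat V_{D_2}$, since this would force $w(C), w(-C)$ both to lie in the same $D_j$. Once these exclusions are confirmed, the central contributions are exactly those enumerated above, and the identity $\delta(\chi((x_C)_C)) = \sum_{C_2}{}^\dw[x_{-C_2},x_{C_2}]$ follows.
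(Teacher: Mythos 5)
Your proposal is correct and follows essentially the same route as the paper: conjugate the ordered product $\prod_C x_C$ by $\dw$, reorder the factors into the $\hat V_{D_1}\hat V_{D_2}\BT$ normal form, and observe that the only nonvanishing central corrections come from commutators $[{}^\dw x_{C}, {}^\dw x_{-C}]$ of opposite orbits, with the membership conditions for a swap combining exactly to $C_2 \subseteq D$. The paper's proof is just a compressed version of this bookkeeping, so no further comment is needed.
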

\begin{proof}
    By definition and that $[\hat V_{D_i}, \hat V_{D_i}] = 0$ for $i = 1, 2$, we have \[\d((x_C)_C) = \sum_{C_1, C_2} {}^\dw [x_{C_1}, x_{C_2}],\] where $C_1$ and $C_2$ range over $\t$-orbits of $D_1 \sm D_2$ and $D_2$ respectively such that $w(C_1) \subseteq D_2$ and $w(C_2) \subseteq D_1 \sm D_2$. The statement then follows by noticing that $[x_{C_1}, x_{C_2}] = 0$ unless $C_1 = -C_2$. 
\end{proof}

\begin{proposition} \label{dim}
    We have \[\dim_{\ov\BQ_\ell} H_c^*(X, \ov\BQ_\ell)[\psi\i \boxtimes \phi] = \dim_{\ov\BQ_\ell} H_c^*(Y, \ov\BQ_\ell)[\psi\i \boxtimes \phi] = \d_{{}^{w\i}\psi, \phi}.\]
\end{proposition}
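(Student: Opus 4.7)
The plan is to follow the Deligne--Lusztig style proof of the Mackey formula, adapted to the deep-level Heisenberg setting as in \cite{Nie_24}. The first equality is immediate: the map $X\to Y$, $(x_1,x_2,y_1,y_2,z)\mapsto(x_1x_2,z)$, is a $\BT^F\times\BT^F$-equivariant affine-space fibration with fiber $\hat V_{D_1\cap D_2}$ (this is the $\gamma$ fibration discussed just before the definition of $X$ and $Y$), hence it induces isomorphisms on the $[\psi^{-1}\boxtimes\phi]$-isotypic components, reducing everything to computing $H^*_c(Y,\ov\BQ_\ell)[\psi^{-1}\boxtimes\phi]$.

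For the cohomology of $Y$, I would combine the Cartesian diagram of \Cref{square} with proper base change to identify $\pr_1\colon Y\to\hat V_{D_1'}\hat V_{D_2'}$ as the pullback along $\delta$ of the Lang covering $-L\colon\BT\to\BT$. Decomposing $L_!\ov\BQ_\ell$ into its Kummer-type rank-one summands $\CL_\phi$ indexed by characters of $\BT^F$ and taking the $\phi$-isotypic component for the second $\BT^F$-action yields
\[
H^*_c(Y,\ov\BQ_\ell)[\psi^{-1}\boxtimes\phi]\;\cong\;H^*_c(\hat V_{D_1'}\hat V_{D_2'},\,\delta^*\CL_\phi)[\psi^{-1}],
\]
where the first $\BT^F$ acts on the base by the adjoint action and, through the $w(s)$-translation on the $\BT$-factor, picks up an extra character twist by ${}^{w^{-1}}\phi$. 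Thus the $[\psi^{-1}]$-isotypic condition on the right-hand side is really a $({}^{w^{-1}}\phi\cdot\psi^{-1})^{-1}$-isotypic condition for the adjoint action.

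To compute the resulting local-system cohomology, I would use the product decomposition $\chi\colon\prod_C\hat V_C\cong\hat V_{D_1'}\hat V_{D_2'}$ together with the explicit formula of \Cref{delta}: only the pairs of orbits $\{C_2,-C_2\}$ with $C_2\subseteq D$ contribute to $\delta$, and on each such pair the contribution is the bilinear pairing $[\,\cdot\,,\,\cdot\,]$, whose non-degeneracy is exactly \Cref{V_C}(5). I would then apply \Cref{iteration} iteratively, one variable at a time: variables $x_C$ for $C\not\subseteq\pm D$ do not enter $\delta$, so they contribute only via the adjoint-action character on the compactly supported cohomology of $\hat V_C\cong\BA^1$; variables $x_{C_2}$ for $C_2\subseteq D$ contribute via the linear maps $x_{C_2}\mapsto{}^{\dot w}[x_{-C_2},x_{C_2}]$, and \Cref{iteration} forces vanishing off the locus where the induced pullback of $\CL_\phi$ becomes trivial. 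Tracking the characters picked up at each orbit and comparing with the required $\psi^{-1}$ and ${}^{w^{-1}}\phi$-twists, the combined vanishing condition is precisely ${}^{w^{-1}}\psi|_{\BT^F}=\phi|_{\BT^F}$, in which case exactly one top-degree class survives and the total virtual dimension is $1$.

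The main obstacle is the character bookkeeping: for each orbit $C$ one must compatibly match the character of the adjoint $\BT^F$-action on $\hat V_C$, the ${}^{w^{-1}}\phi$-twist coming from left translation by $w(s)$ on $\BT$, and the Kummer character cut out by $\delta^*\CL_\phi$ on the $(x_{-C_2},x_{C_2})$-plane, and then verify that the conjunction of all these conditions is equivalent to the single Mackey condition ${}^{w^{-1}}\psi=\phi$. The non-degeneracy \Cref{V_C}(5) together with the pairing symmetry $C\leftrightarrow -C$ is what guarantees that \Cref{iteration} applies with a nontrivial linear map at each step and that these many potential vanishing conditions collapse into a single one.
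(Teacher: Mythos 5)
Your proposal follows essentially the same route as the paper's proof: the affine fibration $X\to Y$ for the first equality, the Cartesian square of \Cref{square} combined with the decomposition of the Lang covering to reduce to $H_c^*(\hat V_{D_1'}\hat V_{D_2'},\delta^*\CL_\phi)$, the explicit formula of \Cref{delta}, and the iterated application of \Cref{iteration} (via the non-degeneracy in \Cref{V_C}(5) together with genericity of the $\phi_i$) to kill everything off the locus where $\delta$ vanishes. The ``character bookkeeping'' you flag as the main obstacle is in fact a non-issue: the adjoint $\BT^F$-action on the surviving affine space $\hat V_{(D_1'\cup D_2')\sm -D}$ is the restriction of an action of the connected group $\BT$ and hence acts trivially on its one-dimensional top-degree compactly supported cohomology, so the only condition that survives is the one carried by the residual $\BT^F$-factor, namely $\dim H_c^*(\BT^F,\ov\BQ_\ell)[\psi^{-1}\boxtimes\phi]=\delta_{{}^{w^{-1}}\psi,\phi}$, exactly as in the paper.
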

\begin{proof}
First we consider the $\phi$-weight space $H_c^i(Y, \ov\BQ_\ell)[\phi]$ of $\{1\} \times \BT^F$ for $i \in \BZ$. By \Cref{square} we have \[H_c^i(Y, \ov\BQ_\ell)[\phi] = H_c^i(\hat V_{D_1'} \hat V_{D_2'}, \d^* \CL_\phi).\] Let $C$ be a $\t$-orbit of $\Psi$ and $u \in \hat V_{-C}$, the map \[\d_u : \hat V_C \cong \BG_a \to \BT, \quad\  v \mapsto [u, v]\] is a group homomorphism and the pull-back $\d_u^* \CL_\phi$ is nontrivial if and only if $u = 0$. By \Cref{iteration} and \Cref{delta} we have \[H_c^i(Y \sm Y', \ov\BQ_\ell)[\phi] \cong H_c^i((\hat V_{D_1'} \hat V_{D_2'}) \sm \hat V_{D_1' \sm -D} \hat V_{D_2'}, \d^* \CL_\phi) = 0,\] where $Y' = \{(x, \t)\in Y; x \in \hat V_{D_1' \sm -D} \hat V_{D_2'}\}  = \hat V_{D_1' \sm -D} \hat V_{D_2'} \times \BT^F$. In particular, \[\dim_{\ov\BQ_\ell} H_c^*(Y, \ov\BQ_\ell)[\psi\i \boxtimes \phi] = \dim_{\ov\BQ_\ell} H_c^*(Y', \ov\BQ_\ell)[\psi\i \boxtimes \phi].\] As the natural projection $Y' = \hat V_{(D_1' \cup D_2') \sm -D} \times \BT^F \to \BT^F$ is a $\BT^F \times \BT^F$-equivariant affine space fibration, we have \[\dim_{\ov\BQ_\ell} H_c^*(Y', \ov\BQ_\ell)[\psi\i \boxtimes \phi] = \dim_{\ov\BQ_\ell} H_c^*(\BT^F, \ov\BQ_\ell)[\psi\i \boxtimes \phi] = \d_{{}^{w\i} \psi, \phi}.\] The proof is finished.
\end{proof}

\subsection{Inner product computation}
Now we state Mackey formula for the inner product of the virtual $\BK^F$-module 
\begin{theorem} \label{product}
    We have \[\<\CR_{\BT}^{\BK}(\phi), \CR_{\BT}^{\BK}(\phi)\>_{\BK^F}  = \sharp \{w \in W_{\BL}(\BT)^F;  {}^w\phi = \phi\}.\] Here we identify $\phi$ with its restriction $\phi|_{\BT^F}$.
\end{theorem}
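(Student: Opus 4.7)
My plan is to follow the standard Deligne--Lusztig strategy: reduce the inner product to the cohomology of a quotient variety, stratify that quotient by the decomposition
\[ \BK \;=\; \bigsqcup_{w \in W_{\BL}(\BT)} \BI\,\hat V_{\Psi_-\cap{}^w\Psi_-}\,\dw\,\BT\,\BI \]
recorded at the start of this section, and match each stratum against the model $Y_{D_1,D_2,w}$ whose cohomology was computed in \S\ref{subsec:D_i}. By the usual K\"unneth and Hom-adjunction argument---the $\BT^F\times\BT^F$-action on $Z\times Z$ (right multiplication on each factor) commutes with the diagonal $\BK^F$-action---one has
\[ \langle \CR^{\BK}_{\BT}(\phi),\CR^{\BK}_{\BT}(\phi)\rangle_{\BK^F} \;=\; \dim_{\ov\BQ_\ell} H^*_c(\BK^F\backslash(Z\times Z),\ov\BQ_\ell)[\phi^{-1}\boxtimes\phi]. \]

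Next, Lang's theorem together with the change of variables $(x,y)\mapsto(z,u):=(x^{-1}y,\,x^{-1}F(x))$ identifies
\[ \BK^F\backslash(Z\times Z) \;\cong\; \{(z,u)\in\BK\times F\BI \colon z^{-1}uF(z)\in F\BI\}, \]
on which $\BT^F\times\BT^F$ acts by $(s,t)\colon(z,u)\mapsto(s^{-1}zt,\,s^{-1}uF(s))$. Stratifying by the Bruhat position of $z$ produces subvarieties $\Sigma_w$. I would show that the $\BI$-cofactors in the expression $z=i_1\,v\,\dw\,\tau\,i_2$ can be absorbed via affine-space fibrations (using the free actions of $U_0$ and $\hat V_{\Psi_+}$ on the two sides) so that $\Sigma_w$, on the level of $\phi^{-1}\boxtimes\phi$-weight spaces of cohomology, is identified with $Y_{D_1,D_2,w}$ for $D_1=\Psi_-$ and $D_2=\Psi_-\cap{}^w\Psi_-$. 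If $w\notin W_{\BL}(\BT)^F$ the $F$-orbit sum of strata vanishes by a standard argument (one exhibits a free action of an $F$-stable connected unipotent subgroup on the combined orbit). For $F$-fixed $w$, \Cref{dim} contributes $\delta_{{}^w\phi,\phi}$, and summation over $W_{\BL}(\BT)^F$ yields the formula.

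The main obstacle will be this cohomological collapse $\Sigma_w\rightsquigarrow Y_{D_1,D_2,w}$. Since $\BI=U_0\hat V_{\Psi_+}$ sits inside the Heisenberg-type group $\hat V$ only as a section of a Lagrangian (via \Cref{lagrangian}) and not as a direct factor, extracting the $\BI$-cofactors from $z$ forces commutator terms in $\BK^+$ to appear after Lang-twisting by $F$. Linearizing these terms via the isomorphism $j$ of \Cref{lift} brings us into exactly the setup of \Cref{delta}, where the Lang map $L(\tau)$ picks up a controlled commutator correction on the $\BT$-coordinate; the vanishing criterion of \Cref{iteration} then delivers the desired cohomological collapse. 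Once this is in place, the remaining bookkeeping parallels the unramified case treated in \cite[Proposition~5.5]{Nie_24}.
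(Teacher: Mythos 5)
Your skeleton (reduce the inner product to a quotient variety, stratify by $w\in W_{\BL}(\BT)$ via $\BK=\bigsqcup_w \BI\,\hat V_{\Psi_-\cap{}^w\Psi_-}\,\dw\,\BT\,\BI$, and feed the strata into \Cref{dim}) agrees with the paper's, but the two mechanisms that actually make the strata compute correctly are missing from or misidentified in your outline. The paper splits each stratum $\Sigma_w$ into the locus $\Sigma_w''$ where the $\hat V_{\Psi_-\cap{}^w\Psi_-}$-coordinate $\bar v$ vanishes and the locus $\Sigma_w'$ where $\bar v\neq 0$. The locus $\Sigma_w'$ is the positive-depth intertwining, and its $[\phi^{-1}\boxtimes\phi]$-weight space does \emph{not} vanish by any affine-fibration or commutator-linearization device of the kind you invoke from \Cref{iteration} and \Cref{delta}: it vanishes because, after stratifying by the first nonzero $\tau$-orbit component $\bar v_C$, one builds automorphisms $f_s$ of $\Sigma_w^{\prime,C}$ indexed by $s$ in the norm subgroup $N_F^{F^n}((\BK^+)_C^{F^n})\subseteq\BT^{r_C}$ which act trivially on cohomology, while $\phi$ restricted to that subgroup equals $\phi_{i_C-1}$ there and is \emph{nontrivial} precisely because $\phi_{i_C-1}$ is $(G^{i_C-1},G^{i_C})$-generic. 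Your proposal never uses the genericity of the Howe factorization; since without genericity the theorem is simply false (non-generic characters create extra intertwining in positive depth), this is a genuine gap and not a presentational difference.

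Second, your claimed cohomological collapse of $\Sigma_w$ onto $Y_{\Psi_-,\,\Psi_-\cap{}^w\Psi_-,\,w}$ is not how the remaining stratum is handled, and ``absorbing the $\BI$-cofactors by affine fibrations'' cannot dispose of the depth-zero factor $U_0\subseteq\BI$. The paper instead lets the reductive part $H_\red^\circ$ of the twisted-diagonal torus $H=\{(s,t)\in\BT\times\BT:\ s^{-1}F^{-1}(s)=\dw t^{-1}F^{-1}(t)\dw^{-1}\}$ act on $\Sigma_w''$ and passes to fixed points. This single step is what forces $F(w)=w$ (the fixed locus is empty otherwise --- not a free-unipotent-action argument) and what identifies the fixed locus with $X_{D,FD,w^{-1}}$ for $D=\{\a\in\Psi:\a(\BT_\red)=\{1\}\}$, a set in general much smaller than $\Psi_-$; only then does \Cref{dim} apply and give $\delta_{{}^w\phi,\phi}$. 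So although your per-stratum answer $\delta_{w,F(w)}\,\delta_{{}^w\phi,\phi}$ is the right target, neither of the two decisive steps --- genericity killing $\Sigma_w'$, and torus fixed points reducing $\Sigma_w''$ --- is supplied by your argument.
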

\begin{proof}
    For $w \in W_{\BL}(\BT)$ we set \[\Sigma_w = \{(x, x', v, \bar v, \t, u) \in F\BI \times F\BI \times \BI \times \hat V_{\Psi_- \cap {}^w \Psi_-} \times \BT \times \BI; x F(\bar v \dw \t) = v \bar v \dw \t u x'\}.\]
    Write $\Sigma_w = \Sigma_w' \sqcup \Sigma_w''$, where $\Sigma_w'$ and $\Sigma_w''$ are defined by the conditions $\bar v \neq 0$ and $\bar v = 0$ respectively. Note that $\BT^F \times \BT^F$ acts on $\Sigma_w$ by \[(s, t): (x, x', v, \bar v, \t, u) \mapsto (sxs\i, tx't\i, svs,  s \bar v s\i, \dw\i(s) \t t\i, tut\i).\] As in the proof of \cite[Proposition 5.5]{Nie_24}, it suffices to show that \[\dim H_c^*(\Sigma_w'', \ov\BQ_\ell)[\phi\i \boxtimes \phi] = \d_{w, F(w)} \cdot \d_{{}^w \phi, \phi}\] and $\dim H_c^*(\Sigma_w', \ov\BQ_\ell)[\phi\i \boxtimes \phi] = 0$.

    Let $H = \{(s, t) \in \BT \times \BT; s\i F\i(s) = \dw t\i F\i(t) \dw\i\}$, which acts on $\Sigma_w''$ by \[(s, t): (x, x', v, \t, u) \mapsto (sxs\i, tx't\i, svs, \dw\i(s) \t t\i, tut\i).\] Let $H_\red^\circ$ and $\BT_\red$ be the reductive parts of the identity component of $H^\circ$ and $\BT$ respectively. As $\BI^{\BT_\red} = \hat V_D$ with $D = \{\a \in \Psi; \a(\BT_\red) = \{1\}\}$. It follows that $(\Sigma_w'')^{H_\red^\circ} \neq \varnothing$ only if $F(w) = w$, and in this case we have \[(\Sigma_w'')^{H_\red^\circ} \cong X_{D, FD, w\i}\] as $\BT^F \times \BT^F$-varieties, where $X_{D, FD, w\i}$ is as in \S\ref{subsec:D_i}. It follows from \Cref{dim} that $\dim_{\ov\BQ_\ell} H_c^i((\Sigma_w'')^{H_\red^\circ}, \ov\BQ_\ell)[\phi\i \boxtimes \phi] = \d_{{}^w \phi, \phi}$. Hence $\dim H_c^*(\Sigma_w'', \ov\BQ_\ell)[\phi\i \boxtimes \phi] = \d_{w, F(w)} \cdot \d_{{}^w \phi, \phi}$ as desired.

    It remains to show $H_c^*(\Sigma_w', \ov \BQ_\ell)[\phi\i \boxtimes \phi] = 0$. Note that \[\hat V_{\Psi_- \cap {}^w \Psi_-} = \bigoplus_{C \in (-\CC) \cap (-{}^w\CC)} \hat V_C.\]  For $\bar v \in \hat V_{\Psi_- \cap {}^w \Psi_-}$ and $C \in (-\CC) \cap (-{}^w\CC)$ let $\bar v_C \in \hat V_C$ such that $\bar v = \sum_C \bar v_C$. We fix a total order $\le$ on $ (-\CC) \cap (-{}^w\CC)$. Let $\hat V_{\Psi_- \cap {}^w \Psi_-}^C$ be the subset of elements $\bar v$ such that $\bar v_C \neq 0$ and $\bar v_{C'} = 0$ for all $C' < C$. Then we have \[\hat V_{\Psi_- \cap {}^w \Psi_-} \sm \{0\} = \bigsqcup_C \hat V_{\Psi_- \cap {}^w \Psi_-}^C.\] The above decomposition induces a decomposition \[\Sigma_w' = \bigsqcup_C \Sigma_w^{ \prime, C}.\] It remains to show $H_c^*(\Sigma_w^{\prime, C}, \ov \BQ_\ell)[\phi\i \boxtimes \phi] = 0$ for all $C \in (-\CC) \cap (-{}^w\CC)$.

    Let $C \in (-\CC) \cap (-{}^w\CC)$. Consider the restricted action of $\BT^F \cong \BT^F \times \{1\} \subseteq \BT^F \times \BT^F$ on $\Sigma_w^{\prime, C}$ given by \[s: (x, x', v, \bar v, \t, u) \mapsto (sxs\i, x', svs\i, s\bar vs\i, w\i(s)\t, u).\]  It suffices to show the $\phi\i$-weight subspace $H_c^*(\Sigma_w^{\prime, C}, \ov \BQ_\ell)[\phi\i]$ is trivial.

    Let $\bar v \in \hat V_{\Psi_- \cap {}^w \Psi_-}^C$. We fix an isomorphism \[\l_{\bar v}: \hat V_{-C} \overset \sim \longrightarrow (\BK^+)_C, \quad \z \to \k(\bar v, \z).\] Consider the subgroup \[H = \{s \in \BT^{r_C}; s\i F\i(s) \in (\BK^+)_C\}.\] For $s \in H$ we define an isomorphism $f_s: \Sigma_w^{\prime, C} \to \Sigma_w^{\prime, C}$ by \[f_s: (x, x', v, \bar v, \t, u) \mapsto (x_s, x' F({}^{(\dw \t)\i} \z), svs\i, s\bar vs\i, w\i(s)\t, u)\] with $\z = \l_{\bar v}\i(s F\i(s)\i)$ such that \[x_s F(\bar v \dw \t) = s v \bar v \dw \t u x' F({}^{(\dw \t)\i} \z).\] The induced map of $f_s$ on each subspace $H_c^i(\Sigma_w^{\prime, C}, \ov \BQ_\ell)$ is trivial for each $s \in N_F^{F^n}((\BK^+)_C^{F^n}) \subseteq H^\circ$. Here $n \in \BZ_{\ge 1}$ such that $F^n(C) = C$, and $N_F^{F^n}: \BT^{r_C} \to \BT^{r_C}$ is the map given by $s \mapsto s F(s) \cdots F^{n-1}(s)$. On the other hand, we have \[\phi |_{N_F^{F^n}((\BK^+)_C^{F^n})} = \phi_{i_C - 1} |_{N_F^{F^n}((\BK^+)_C^{F^n})},\] which is nontrivial since $\phi_{i_C-1}$ is $(G^{i_C-1}, G^{i_C})$-generic. Thus $H_c^*(\Sigma_w^{\prime, C}, \ov \BQ_\ell)[\phi\i]$ is trivial as desired.
\end{proof}

\section{A concentration theorem} \label{sec:concentration}
In this section, we follow the approach in \cite[\S 5]{Nie_24} to prove a concentration theorem on the cohomology of the ``positive-depth part" of $Z$ in the tamely ramified case. This will be a crucial ingredient in proving the character formula of $\BR_{\BT}^{\BK}(\phi)$.

\subsection{A variation of Boyarchenko--Weinstein's calculation}
Recall that the natural map $\Gal(K/k) \to \Gal(k_f/k)$ admits a splitting. Thus $\Gal(K/k)$ is generated by $\t$ and a lift $F$ of Frobenius with $F^f = 1$.
Note that $\pi^e = a \varpi$ for some root of unity $a \in k_f$. Let $\s: \BG_a \to \BG_a, x \mapsto x^q$ be the Frobenius map induced by $F$.

Let $C \subseteq \Psi$ be a $\tau$-orbit. In particular,  $V_C \neq 0$ and $C \neq -C$ by \Cref{V_C}. We identify ${}^E T_{r_C:r_C}$ with the $\ov\BF_q$-linear space $X_*(T) \otimes \ov\BF_q \pi^{e r_C}$. For $\a \in C$ we define an isomorphism  \[\th_\a: \BG_a \to (\BK^+)_C, \quad\ x \mapsto \sum_{i=0}^{|C|-1} \t^i(\a^\vee) \otimes \xi^{i e r_C} x \pi^{e r_C}. \]

Let $N$ be the minimal positive integer such that $F^N(C) = C$.
Suppose that $N$ is even and $F^{N/2}(C) = -C$. Let $\a \in C$ and $j \in \BZ$ such that \[F^{N/2} \circ \t^j (\a) = -\a.\] In particular, for $\g = \pm \a$ and $x \in \ov\BF_q$ we have \[F^{N/2}(\th_\g(x)) = \th_{-\g}(\eta \s^{N/2}(x)),\] where $\eta$ is the natural image of $F^{N/2}(\xi^j) (F(\pi)/\pi)^{e r_C}$ in $\ov\BF_q$. Let $\z \in \ov\BF_q$ such that $\z \s^{N/2} \z\i = \eta \s^{N/2} \in \ov\BF_q \rtimes \<\s\>$. Then we have $F^{iN}(\th_\a(x)) = \th_\a(\z \s^{iN} \z\i(x))$ for $i \in \BZ$.

As $F^f = 1 \in \Gal(K/k)$, for each $x \in \BF_{q^f}$ we have \[\th_\a(x) = F^f(\th_\a(x)) = \th_\a(\z \s^f \z\i (x)) ,\] which implies that $\z = \s^f(\z)$ and hence $\z \in \BF_{q^f}$. 

\begin{lemma} \label{image}
    Let $C$, $N$, $\a$ and $\z$ be as above. Then we have \[N_{K/k} \circ \a^\vee(K_{r_C}^\times) = (\sum_{i=0}^{N-1} F^i) \circ \th_\a(\z \BF_{q^N}).\]
\end{lemma}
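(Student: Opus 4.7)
The plan is to compute both sides as subsets of ${}^E T_{r_C:r_C+}$ and show they coincide by reducing a Galois norm to a trace in residue fields.

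First, I would reduce the left-hand side to something involving $\theta_\alpha$ and sums of powers of $F$. Any $x\in K_{r_C}^\times$ is congruent to $1 + \pi^{er_C}x_0$ modulo $K_{r_C+}^\times$ for some $x_0\in\BF_{q^f}$ (lifted to $\CO_{k_f}$), so $\alpha^\vee(x)\equiv \alpha^\vee\otimes x_0\pi^{er_C}$ in ${}^E T_{r_C:r_C+}$. Using the splitting $\Gal(K/k) = \langle\tau\rangle\times\langle F\rangle$ and $\tau(x_0)=x_0$, I would compute
\[
N_{K/k}(\alpha^\vee(x)) = \sum_{j=0}^{f-1} F^j\!\Bigl(\sum_{i=0}^{e-1}\tau^i(\alpha)^\vee\otimes \xi^{ier_C}x_0\pi^{er_C}\Bigr).
\]
Grouping the inner sum by residues modulo $|C|$ and using $\xi^{|C|er_C}=1$ (established in the proof of \Cref{V_C}(5)), the inner sum equals $d\cdot\theta_\alpha(x_0)$, where $d := e/|C|$. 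Setting $\phi_f := \sum_{j=0}^{f-1}F^j$ and $\phi_N := \sum_{i=0}^{N-1}F^i$, this gives $N_{K/k}(\alpha^\vee(x)) = d\cdot \phi_f(\theta_\alpha(x_0))$.

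The heart of the argument is a telescoping identity. Since $F^f=1$ in $\Gal(K/k)$ acts trivially on $\Phi$, we have $F^f(C)=C$, hence $N\mid f$; write $m := f/N$, so that $\phi_f = \phi_N\cdot\sum_{k=0}^{m-1} F^{kN}$. Iteration of the formula $F^N(\theta_\alpha(x))=\theta_\alpha(\zeta\sigma^N\zeta^{-1}(x))$ (derived in the text from $\zeta\sigma^{N/2}\zeta^{-1}=\eta\sigma^{N/2}$) yields by induction
\[
F^{kN}(\theta_\alpha(x)) = \theta_\alpha\!\left(\tfrac{\zeta}{\sigma^{kN}(\zeta)}\cdot\sigma^{kN}(x)\right),
\]
a telescoping expression which uses $\zeta\in\BF_{q^f}^\times$ (so that $x_0/\zeta\in\BF_{q^f}$ makes sense). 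Combining with additivity of $\theta_\alpha$ gives
\[
\phi_f(\theta_\alpha(x_0)) = \phi_N\Bigl(\theta_\alpha\bigl(\zeta\cdot\textstyle\sum_{k=0}^{m-1}\sigma^{kN}(x_0/\zeta)\bigr)\Bigr) = \phi_N\bigl(\theta_\alpha(\zeta\cdot\mathrm{Tr}_{\BF_{q^f}/\BF_{q^N}}(x_0/\zeta))\bigr),
\]
since $\sum_{k=0}^{m-1}\sigma^{kN}$ realises the trace from $\BF_{q^f}$ to $\BF_{q^N}$.

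To conclude, surjectivity of $\mathrm{Tr}_{\BF_{q^f}/\BF_{q^N}}$ gives $\phi_f(\theta_\alpha(\BF_{q^f})) = \phi_N(\theta_\alpha(\zeta\BF_{q^N}))$ as subsets, and the scalar $d\in\BF_q^\times$ (coprime to $p$) can be absorbed because the right-hand set is stable under $\BF_q^\times$-multiplication (using $\sigma^l(c)=c$ for $c\in\BF_q$, so that $c\cdot\phi_N(\theta_\alpha(\zeta y_0)) = \phi_N(\theta_\alpha(\zeta\cdot cy_0))$). The main technical obstacle will be the telescoping step: correctly converting the defining identity $\zeta\sigma^{N/2}\zeta^{-1}=\eta\sigma^{N/2}$ into the inductive formula for $F^{kN}$, and recognising the resulting additive functional $x_0\mapsto\sum_k \sigma^{kN}(x_0/\zeta)$ as $\mathrm{Tr}_{\BF_{q^f}/\BF_{q^N}}(x_0/\zeta)$.
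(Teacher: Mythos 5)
Your proposal is correct and follows essentially the same route as the paper: decompose $N_{K/k}$ as $(\sum_{i}F^i)\circ(\sum_l\tau^l)$, collapse the $\tau$-sum to $\frac{e}{|C|}\theta_\alpha(x_0)$, factor $\sum_{i=0}^{f-1}F^i$ through $\sum_{i=0}^{N-1}F^i$, convert $F^{lN}$ via $\theta_\alpha(\zeta\sigma^{lN}\zeta^{-1}(\cdot))$ into the relative trace $\mathrm{Tr}_{\BF_{q^f}/\BF_{q^N}}$, and finish by surjectivity of the trace and invertibility of $e/|C|$. Your extra care with $\xi^{|C|er_C}=1$ and with absorbing the scalar $d$ only makes explicit steps the paper leaves implicit.
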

\begin{proof}
    Let $x \in \BF_{q^f}$. We have \begin{align*}
        &\quad\ N_{K/k}(\a^\vee \otimes x \pi^{e r_C}) \\ &= (\sum_{i=0}^{f-1} F^i) (\sum_{l=0}^{e-1} \t^l) (\a^\vee \otimes x \pi^{e r_C}) \\ &= \frac{e}{|C|}(\sum_{i=0}^f F^i)(\th_\a(x)) \\ &= \frac{e}{|C|} (\sum_{i=0}^{N-1} F^i) (\sum_{l=0}^{\frac{f}{N} - 1}) (F^{l N}(\th_\a(x))) \\ &= \frac{e}{|C|} (\sum_{i=0}^{N-1} F^i) (\sum_{l=0}^{\frac{f}{N} - 1} \th_\a(\z \s^{l N} (\z\i x))) \\ &= \frac{e}{|C|} (\sum_{i=0}^{N-1} F^i)(\th_\a(\z \tr_{\BF_{q^f} / \BF_{q^N}}(\z\i x))).
    \end{align*} As $\z \in \BF_{q^f}$ and that $\frac{e}{|C|}$ is invertible in $\BF_q$, we deduce that \[N_{K/k}(\a^\vee \otimes \BF_{q^f} \pi^{e r_C}) = (\sum_{i=0}^{N-1} F^i) \circ \th_\a(\z \tr_{\BF_{q^f} / \BF_{q^N}}(\BF_{q^f})) = (\sum_{i=0}^{N-1} F^i) \circ \th_\a(\z \BF_{q^N}).\] Hence the statement follows.  
\end{proof}

Let $H$ be a connected commutative algebraic group over a finite field $\BF_Q$ of characteristic $p$. For any character $\chi: H(\BF_Q) \to \ov\BQ_\ell^\times$ we denote by $\CL_\chi$ the associated rank-one local system over $H$.

Let $\phi_+ = \phi |_{(\BT^{0+})^F}$.
\begin{proposition} \label{cohomology}
    Let $C$, $N$, $\a$ and $\z$ be as above. Then we have \begin{align*}
        \dim_{\ov\BQ_\ell} H_c^i(\BG_a, \iota^* \CL_{\phi_+}) = \begin{cases}
            q^{N/2} & \text{ if } i=1; \\ 0 & \text{ otherwise}.
        \end{cases}
    \end{align*} Here $\iota: \BG_a \to \BT_+$ is given by $x \mapsto \th_\a(\z x^{q^{N/2}+1})$.
\end{proposition}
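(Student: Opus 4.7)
The plan is to identify $\iota^\ast\CL_{\phi_+}$ as the pullback of a nontrivial rank-one Artin--Schreier type sheaf along a polynomial map of degree $q^{N/2}+1$, and then extract the cohomology from the Grothendieck--Ogg--Shafarevich formula.

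First I would analyze $\theta_\a^\ast\CL_{\phi_+}$. Since $\theta_\a\colon \BG_a \to (\BK^+)_C$ is an isomorphism of one-dimensional commutative algebraic groups, for any $M$ divisible by both $f$ and $N$ the subgroup $(\BK^+)_C$ becomes $F^M$-stable and the restriction of $\phi_+$ to its $F^M$-rational points pulls back through $\theta_\a$ to a character $\psi\colon \BG_a(\BF_{q^M}) \to \ov\BQ_\ell^\times$, giving $\theta_\a^\ast\CL_{\phi_+} \simeq \CL_\psi$ as rank-one sheaves on $\BG_a$. The decisive point is that $\psi$ is nontrivial: by \Cref{image} the norm image $N_{K/k}\circ\a^\vee(K_{r_C}^\times)$ is exactly the $F$-average of $\theta_\a(\z\BF_{q^N})$, and the $(G^{i_C-1},G^{i_C})$-genericity of $\phi_{i_C-1}$ (the only component of the Howe factorization of $\phi$ contributing nontrivially at depth $r_C$) forces $\phi\circ N_{K/k}\circ\a^\vee$ to be nontrivial on $K_{r_C}^\times$. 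Thus $\CL_\psi$ is an Artin--Schreier type sheaf with Swan conductor $1$ at $\infty$.

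Writing $\iota = \theta_\a\circ f$ with $f(x) = \z x^{q^{N/2}+1}$, one has $\iota^\ast\CL_{\phi_+} \simeq f^\ast\CL_\psi$. Since $q$ is a power of the odd prime $p$, the integer $q^{N/2}+1$ is coprime to $p$, so the map $f\colon \BA^1 \to \BA^1$ extends to a morphism $\BP^1 \to \BP^1$ that is tamely ramified at infinity with ramification index $q^{N/2}+1$. By the standard multiplicativity of Swan conductors under tame pullback,
\[
\operatorname{sw}_\infty(f^\ast\CL_\psi) = (q^{N/2}+1)\cdot\operatorname{sw}_\infty(\CL_\psi) = q^{N/2}+1.
\]
Since $f^\ast\CL_\psi$ remains a nontrivial rank-one lisse sheaf on the connected affine curve $\BG_a$, one has $H^0_c(\BG_a, \iota^\ast\CL_{\phi_+})=0$ (because $\BG_a$ is not proper) and $H^2_c(\BG_a, \iota^\ast\CL_{\phi_+})=0$ (because the geometric $\pi_1$-coinvariants of a nontrivial rank-one representation vanish).

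Applying the Grothendieck--Ogg--Shafarevich formula for a rank-one lisse sheaf on $\BG_a$,
\[
\chi_c(\BG_a, \iota^\ast\CL_{\phi_+}) = \chi_c(\BG_a) - \operatorname{sw}_\infty(f^\ast\CL_\psi) = 1 - (q^{N/2}+1) = -q^{N/2},
\]
which combined with the vanishing of $H^0_c$ and $H^2_c$ yields $\dim_{\ov\BQ_\ell} H^1_c = q^{N/2}$, as claimed. The main difficulty is the first step: since $\theta_\a$ is generally not defined over $\BF_q$ and $(\BK^+)_C$ is only $F^N$-stable, with $F^{N/2}$ interchanging $C$ and $-C$, one must choose the correct $F^M$-rational structure on $\BG_a$ and verify, through \Cref{image}, that the resulting additive character $\psi$ on $\BG_a(\BF_{q^M})$ is genuinely nontrivial --- this is precisely where the genericity of the Howe factorization enters essentially.
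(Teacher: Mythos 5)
Your proof is correct, and it reaches the conclusion by a genuinely different route in its second half. The first half coincides with the paper's argument: the paper also defines $\vartheta(x)=\th_\a(\z x)$, observes that $F^N\th_\a(x)=\th_\a(\z\s^N\z\i x)$ makes $\vartheta$ a homomorphism over $\BF_{q^N}$, identifies $\vartheta^*\CL_{\phi_+}$ with $\CL_\psi$ for the character $\psi=\phi_+\circ N_F^{F^N}\circ\vartheta$ of $\BF_{q^N}$, and deduces nontriviality of $\psi$ from \Cref{image} together with the $(G^{i_C-1},G^{i_C})$-genericity of $\phi_{i_C-1}$ --- exactly your "decisive point." (One imprecision on your side: one cannot literally ``restrict $\phi_+$ to the $F^M$-rational points'' of $(\BK^+)_C$, since that subgroup is not $F$-stable; the correct operation is to compose with the norm $N_F^{F^M}$ and invoke $\CL_{\phi_+\circ N_F^{F^M}}\cong\CL_{\phi_+}$, and the $\BF_{q^M}$-structure on $\BG_a$ must be the $\z$-twisted one --- you flag this but do not carry it out.) Where you diverge is the endgame: the paper additionally proves, by an explicit computation with $N_F^{F^N}\vartheta(x)$ using $F^{N/2}\th_\a(\z x)=\th_{-\a}(\z\s^{N/2}x)$, that $\psi$ is \emph{trivial on} $\BF_{q^{N/2}}$, and then cites \cite[Proposition 6.6.1]{BoyarchenkoW_16} for the sheaf $f^*\CL_\psi$ with $f(x)=x^{q^{N/2}+1}$. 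Your Grothendieck--Ogg--Shafarevich computation ($\mathrm{sw}_\infty(f^*\CL_\psi)=(q^{N/2}+1)\cdot 1$ since $q^{N/2}+1$ is prime to the odd $p$, hence $\chi_c=-q^{N/2}$, with $H^0_c=H^2_c=0$ by geometric nontriviality) is self-contained and shows that the triviality of $\psi$ on $\BF_{q^{N/2}}$ is not needed for the dimension-and-concentration statement as it is phrased; that extra input is what Boyarchenko--Weinstein use to pin down the Frobenius eigenvalues on $H^1_c$, which the proposition does not claim. So your argument buys a more elementary proof of exactly the stated assertion, while the paper's citation packages the finer (maximality) information.
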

\begin{proof}
    Let $\vartheta: \BG_a \to \BT^{0+}$ be given by $x \mapsto \th_\a(\z x)$. Note that $F^N \th_\a(x) = \th_\a(\z \s^N \z\i x)$ for $x \in \ov\BF_q$. Hence $\vartheta$ is an homomorphism of algebraic groups over $\BF_{q^N}$. Consider the following character \[\psi: \BF_{q^N} \overset \vartheta \to (\BT^{0+})^{F^n} \overset {N_F^{F^N}} \to (\BT^{0+})^F \overset {\phi_+} \to \ov\BQ_\ell^\times,\] where $N_F^{F^N}: \BT^{0+} \to \BT^{0+}$ is given by $x \mapsto x F(x) \cdots F^{N-1}(x)$. Then we have \[\CL_\psi \cong \vartheta^* \CL_{\phi_+ \circ N_F^{F^N}} \cong \vartheta^* \CL_{\phi_+},\] where the second isomorphism follows from that $\CL_{\phi_+ \circ N_F^{F^N}} \cong \CL_{\phi_+}$ by \cite[Lemma 5.8]{IvanovNie_24}.

    By \Cref{image} we set \[\Im \psi = \phi_+ \circ (\sum_{i=1}^{N-1} F^i)(\Im \vartheta) = \phi_{i_C-1} \circ (\sum_{i=1}^{N-1} F^i)(\Im \vartheta) = \phi_{i_C-1} \circ N_{K/k} \circ \a^\vee(K_{r_C}^\times) \neq \{1\},\] where the last inequality follows from that $\phi_{i_C-1}$ is $(G^{i_C-1}, G^{i_C})$-generic. Thus $\psi$ is non-trivial. On the other hand, for $x \in \BF_{q^{N/2}}$ we have \begin{align*}
        N_F^{F^N} \vartheta(x) &= N_F^{F^N} \th_\a(\z x) \\ &= (\sum_{i=0}^{N/2-1} F^i)(\th_\a(\z x) + F^{N/2} \th_\a(\z x)) \\ &= (\sum_{i=0}^{N/2-1} F^i)(\th_\a(\z x) + \th_{-\a}(\z \s^{N/2}(x))) \\ &=(\sum_{i=0}^{N/2-1}F^i)(\th_\a(\z x) + \th_{-\a}(\z x)) \\ &= 0.
    \end{align*} In particular, $\psi$ is trivial over $\BF_{q^{N/2}}$. Let $f: \BG_a \to \BG_a$ be given by $x \mapsto x^{q^{N/2}+1}$. We have $\iota = \vartheta \circ f$ and hence \[\iota^* \CL_{\phi_+} \cong f^* \vartheta^* \CL_{\phi_+} \cong f^* \CL_\psi.\] Then the statement follows from \cite[Proposition 6.6.1]{BoyarchenkoW_16}.   
\end{proof}

\subsection{The concentration result}
Let $s \in \widetilde\BK$. We denote by $\Ad(s): \widetilde \BK  \to  \widetilde \BK $ the adjoint action induced by $x \mapsto s x s\i$. Let $H$ be the quotient group of two closed subgroups of $\widetilde\BK$ preserved by $\Ad(s)$. We denote by $H^s$ for the group of $\Ad(s)$-fixed points in $H$, and write $H^{s, \circ}$ for the identity component of $H^s$.

We write $\bar T^F$ for the natural image of $T^F$ in $\widetilde\BK$. Let $t \in \bar T^F$. We set $\Phi^t = \{\g \in \Phi; \g(\hat t) = 1 \in \pi \CO_E\}$, where $\hat t \in T^F$ is any lift of $t$. Define \[d(\phi, t) = |(\Psi \cap \Phi^t) / \Gal(E / k)|,\] where $\Psi \subseteq \Phi$ is as in \S\ref{subsec:lagrangian}. Let $\k_t$ denote the Weil--Heisenberg representation of $\widetilde \BK^t$ associated to generic datum $\L = (G^i, \phi_i)_{i=0}^d$ in the sense of \cite[\S 4]{Yu_01}, see also \cite[\S 2]{Fin_21b}. We write $d(\phi, t) = d(\phi)$ and $\k_t = \k$ if $t = 1$. 
 
\begin{proposition} \label{concentration}
    Let $t \in \BT^F$. Then there exists an integer $n_{\phi, t}$ such that \[ H_c^i(Z \cap \BH^t, \ov\BQ_\ell)[\phi_+] \neq 0 \Longleftrightarrow i = n_{\phi, t}.\] Moreover, $\dim H_c^{n_{\phi, t}}(Z \cap \BH^t, \ov\BQ_\ell)[\phi_+] = |(V^t)^F|^{\frac{1}{2}}$ and $n_{\phi, t} \equiv d(\phi, t) \mod 2$.
\end{proposition}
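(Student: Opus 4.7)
The plan is to reduce $Z \cap \BH^t$ to a ``Heisenberg deep level Deligne--Lusztig variety'' for $\BH^t$, and then compute its $\phi_+$-weight cohomology orbit-by-orbit, closely mirroring the strategy of \cite[\S6]{Nie_24} while absorbing the new phenomena of the tamely ramified setting via \Cref{cohomology}.

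First I would observe that, since $t \in \BT^F \subseteq L$ and the isomorphism $j$ of \Cref{lift} is both $L_{[\bx]}$- and $\tau$-equivariant, taking $\tau$- and $t$-fixed points gives a group isomorphism $\BH^t \xrightarrow{\sim} (V^t)^\sharp := V^t \times \BK^+$ with the induced Heisenberg structure (note $\BK^+$ is central, hence pointwise fixed). Under this identification and using that $\Psi_+$ is $\tau$-stable and $t$-stable, we obtain
\[ F\BI \cap \BH^t = F\hat V_{\Psi_+^t}, \qquad \Psi_+^t := \Psi_+ \cap \Phi^t, \]
so that
\[ Z \cap \BH^t = \bigl\{ g \in \BH^t \colon g^{-1}F(g) \in F\hat V_{\Psi_+^t}\bigr\}. \]
Using the change of variables $g = h\cdot F(v)^{-1}$ with $v \in \hat V_{\Psi_+^t}$, $h \in \BH^t$, the latter can be rewritten as a trivial torsor base over a variety governed solely by the commutator pairing $\kappa$ restricted to $V^t$, after dividing out by $\hat V_{\Psi_+^t}$ and using $\{F(x)-x : x \in \BK^+\} = \BK^+$ (Lang).

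Next I would stratify $\Psi^t := \Psi \cap \Phi^t$ by $F$-orbits $\mathscr O$ of pairs $\{C,-C\}$ of $\tau$-orbits and carry out an iterated fibration argument, as in \cite[Proposition 6.2]{Nie_24}, to decompose the $\phi_+$-weight cohomology as a tensor product of contributions, one per $F$-orbit $\mathscr O$. For each such $\mathscr O$ the contribution is a variety supported on $\hat V_C \oplus \hat V_{-C}$ (summed over $\mathscr O$) whose cohomology is forced to vanish in all but one degree by \Cref{iteration}, because the relevant character on the generated piece of $\BK^+$ equals $\phi_{i_C-1} \circ N_{K/k} \circ \alpha^\vee$ restricted to $K_{r_C}^\times$, which is nontrivial by $(G^{i_C-1},G^{i_C})$-genericity. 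The dimension in the surviving degree is $|(V_\mathscr O^t)^F|^{1/2}$, and multiplying over all $\mathscr O$ yields $|(V^t)^F|^{1/2}$.

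The main obstacle is the case of an $F$-orbit $\mathscr O = \{C\}$ where $C = F^{N/2}(-C)$ and $|C| = N$, a purely ramified phenomenon absent when $e=1$. Here the naive Lang--Artin--Schreier computation does not close up, and one instead must identify the corresponding piece with the variety $\{x \in \BG_a\}$ equipped with the pulled-back local system from $\BT^{0+}$ along the map $x \mapsto \theta_\alpha(\zeta x^{q^{N/2}+1})$. \Cref{cohomology} (via \Cref{image}) gives the crucial input: the cohomology is concentrated in degree $1$ and has dimension $q^{N/2}$, which matches $|(V_\mathscr O^t)^F|^{1/2}$ and contributes parity $1$ to the total degree. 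Combining the contributions from all $\mathscr O$'s gives the concentration in a single degree $n_{\phi,t}$ and, by counting the $\operatorname{Gal}(E/k)$-orbits of pairs, the desired congruence $n_{\phi,t} \equiv d(\phi,t) \bmod 2$.
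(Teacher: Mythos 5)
Your proposal is correct and follows essentially the same route as the paper: the authors prove this proposition by invoking the argument of \cite[Theorem 4.1]{LN_25} (itself the extension of \cite[Proposition 6.2]{Nie_24}) combined with \Cref{cohomology}, which is exactly the orbit-by-orbit reduction you describe, with \Cref{cohomology} supplying the new input for the $F$-orbits satisfying $F^{N/2}(C)=-C$. Your write-up is in effect an expanded version of the citation the paper gives as its proof.
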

\begin{proof}
    It follows the same way as \cite[Theorem 4.1]{LN_25} by using \Cref{cohomology}.
\end{proof}

\section{A character formula for $\hat \CR_{\BT}^{\BK}(\phi)$} \label{sec:character}

Keep the notation in previous sections. The main purpose of this section is to established a character formula for $\CR_{\BT}^{\BK}(\phi)$ and $\hat \CR_{\BT}^{\BK}(\phi)$ by following approaches in \cite{DeligneL_76} and \cite{CO_25}.

First we need two auxiliary lemmas.
\begin{lemma} \label{prime-to-p}
    For $t \in \bar T^F$ the conjugation action $\Ad(t)$ on $\BK$ is semisimple and has finite order prime to the characteristic of $\BF_q$.
\end{lemma}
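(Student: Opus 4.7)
The proof proceeds by topological Jordan decomposition. Lift $t$ to $\tilde t \in T^F$. Since $T^F$ is abelian, $\tilde t$ decomposes uniquely as $\tilde t = \tilde t_s \tilde t_u$ with $\tilde t_s \in T(k)$ of finite order prime to $p$ and $\tilde t_u \in T_{\bx, 0+}(k)$ topologically unipotent, the two factors commuting. Thus $\Ad(\tilde t) = \Ad(\tilde t_s)\Ad(\tilde t_u)$ on $\CK_\L$, and everything descends to $\BK = \CK_\L/\CE_\L$. The heart of the proof is to show that $\Ad(\tilde t_u)$ acts trivially on $\BK$, i.e. $[\tilde t_u, \CK_\L] \subseteq \CE_\L$.

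I would verify this commutator inclusion by reducing, via the product decomposition $\CK_\L = L_{\bx, 0}\, G^1_{\bx, s_0} \cdots G^d_{\bx, s_{d-1}}$, the identity $[\tilde t_u, ab] = [\tilde t_u, a] \cdot a[\tilde t_u, b] a^{-1}$, and the normality of $\CE_\L$ in $\CK_\L$, to the case of a single generator. The $T_{\bx, 0}$-part commutes with $\tilde t_u$, and for an affine root element $u_f(z)$ with $f = (\alpha_f, n_f)$, the standard formula
\[ [\tilde t_u, u_f(z)] = u_f\bigl((\alpha_f(\tilde t_u) - 1)\, z\bigr)\]
together with $\alpha_f(\tilde t_u) - 1 \in \pi \CO_E$ shows the commutator lies in the affine root subgroup at strictly greater depth. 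Matching this against the explicit layering of $\CE_\L$---distinguishing roots in $\Phi_{G^{i-1}}$ from those in $\Phi_{G^i} \setminus \Phi_{G^{i-1}}$, and using that $\CE_\L$ absorbs all contributions along $\Phi_{G^i} \setminus \Phi_{G^{i-1}}$ of depth $> s_{i-1}$, together with $(G^i_\der)_{\bx, r_{i-1}+}$---then yields containment in $\CE_\L$, with the boundary case $f(\bx) = 0$ absorbed into $(G^0_\der)_{\bx, 0+} \subseteq \CE_\L$. This case analysis is the main obstacle: one must track which stratum of the Howe-factorization layering of $\CE_\L$ each depth-shifted commutator lands in, especially when an element of $G^i_{\bx, s_{i-1}}$ is supported along a root of some smaller twisted Levi $G^{i'}$ with $i' < i$.

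Once $\Ad(\tilde t_u)$ is trivial, $\Ad(t) = \Ad(\tilde t_s)$ on $\BK$, so its order divides the order of $\tilde t_s$ and is therefore prime to $p$. Semisimplicity is then immediate: a finite-order automorphism of a perfect affine $\ov\BF_q$-group scheme with order coprime to $p$ acts semisimply on the coordinate algebra by Maschke's theorem; equivalently, $\Ad(\tilde t_s)$ acts on each Moy--Prasad graded piece by a prime-to-$p$ root of unity, hence diagonalizably.
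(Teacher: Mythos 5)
Your central computation --- that $[\tilde t_u, u_f(z)] = u_f\bigl((\alpha_f(\tilde t_u)-1)z\bigr)$ shifts depth strictly and lands in the relevant stratum of $\CE_\L$ --- is the right content, and it is essentially what the paper's one-line proof relies on when it says that $\BK$ is generated by $\BT$ together with one-dimensional additive subgroups preserved by $\Ad(t)$. The gap is in your very first step. An element $\tilde t\in T^F=T(k)$ need not admit a topological Jordan decomposition $\tilde t=\tilde t_s\tilde t_u$ with $\tilde t_s$ of \emph{finite} order prime to $p$: such a decomposition exists only for elements generating a bounded subgroup, whereas $\bar T^F$ is by definition the image of \emph{all} of $T^F$ in $\widetilde\BK$ (note $T(k)\subseteq G^0_{[\bx]}\subseteq \widetilde\CK_\L$), and $T(k)$ is unbounded for an elliptic torus in general --- it contains the central uniformizer $\varpi$, and when the splitting field is ramified it contains elements whose $e$-th power is $a\varpi$. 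For instance, for $G=\GL_2$ and $T=E^\times$ with $E/k$ ramified quadratic, a uniformizer of $E$ generates an unbounded subgroup of $T(k)$ and admits no decomposition of the asserted form. Trying to patch this by passing to a power $\tilde t^N\in Z(G)^F T_{\bx,0}^F$ (which exists by \Cref{lm:quot_torus_finite}) does not recover primality to $p$, since $N$ may be divisible by $p$.

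The repair is to apply your Teichm\"uller/principal-unit decomposition not to the element $\tilde t$ but to the scalars $\alpha_f(\tilde t)$: since $\tilde t$ stabilizes $[\bx]$ it normalizes every filtration step, so $\alpha_f(\tilde t)\in\CO_E^\times$; its Teichm\"uller part acts on the one-dimensional piece ${}^EG^f$ by a prime-to-$p$ root of unity in $\ov\BF_q^\times$, while its principal-unit part produces exactly the depth-shifted correction you analyse, which dies in $\CE_\L$ (and $\Ad(t)$ is trivial on $\BT$). Since $\BK$ is generated by $\BT$ and finitely many such one-dimensional subgroups, and an automorphism is determined by its effect on generators, $\Ad(t)$ has finite order prime to $p$ and is semisimple --- which is precisely the paper's argument. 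Your commutator bookkeeping then serves as the verification that the higher-depth corrections indeed fall into $\CE_\L$, so the bulk of your work is reusable once the faulty global decomposition is replaced by this root-by-root one.
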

\begin{proof}
    The statement follows from that $\BK$ is generated by $\BT$ and one-dimensional additive subgroups over $\ov\BF_q$ which are preserved by $\Ad(s)$. 
\end{proof}

\begin{lemma} \label{fix}
    Let $s \in \BK^F \bar T^F$ and $x \in \bar T^F$. The map $(h, y) \mapsto h y$ induces a bijection \[\th: \{h \in \BK^F; s h x =h\} \times_{(\BK^{x, \circ})^F} (Z \cap \BK^{x, \circ}) \cong \{g \in Z; s g x = g\}.\] 
\end{lemma}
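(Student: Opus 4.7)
The plan. The map $\th$ is evidently well-defined: for $(h,y)$ on the left-hand side, I would first check that $g := hy$ lies in $Z$ (since $h \in \BK^F$ gives $(hy)^{-1}F(hy) = y^{-1}F(y) \in F\BI$) and that $s(hy)x = hy$ (using $shx = h$ and $x^{-1}yx = y$, which follows from $y \in \BK^{x,\circ} \subseteq \BK^x$); then verify that replacing $(h,y)$ by $(hk^{-1}, ky)$ for $k \in (\BK^{x,\circ})^F$ yields the same product. For \emph{injectivity}, if $hy = h'y'$ with both pairs lying in the left-hand side, then $k := h^{-1}h' = y(y')^{-1}$ lies simultaneously in $\BK^F$ (first expression) and in $\BK^{x,\circ}$ (second expression, since both $y,y' \in \BK^{x,\circ}$), hence in $(\BK^{x,\circ})^F$, yielding the desired equivalence.

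The substance is \emph{surjectivity}. Given $g$ on the right-hand side, the relation $sgx = g$ rewrites as $gxg^{-1} = s^{-1}$, so $g$ lies in the $F$-stable right $\BK^x$-torsor $X_s := \{g' \in \BK \mid g'xg'^{-1} = s^{-1}\}$, which decomposes into $|\BK^x/\BK^{x,\circ}|$ connected components $g'\BK^{x,\circ}$. The heart of the argument is the claim
\[
g^{-1}F(g) \in \BK^{x,\circ},
\]
which says that the component of $g$ is $F$-stable. Granting this claim, Lang--Steinberg applied to the connected group $\BK^{x,\circ}$ produces $u \in \BK^{x,\circ}$ with $u^{-1}F(u) = g^{-1}F(g)$. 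Setting $h := gu^{-1}$ and $y := u$, one has $F(h) = h$ and $hxh^{-1} = s^{-1}$ (so $h \in X_s \cap \BK^F = \{h \in \BK^F \mid shx = h\}$), while $y \in \BK^{x,\circ}$ satisfies $y^{-1}F(y) = g^{-1}F(g) \in F\BI$, so $y \in Z \cap \BK^{x,\circ}$. This exhibits the required preimage $(h,y)$ with $\th(h,y) = g$.

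The main obstacle, and the only nontrivial input, is the claim $g^{-1}F(g) \in \BK^{x,\circ}$. One already knows $g^{-1}F(g) \in F\BI$ (as $g \in Z$) and $g^{-1}F(g) \in \BK^x$ (as both $g$ and $F(g)$ lie in $X_s$), so it suffices to show $F\BI \cap \BK^x \subseteq \BK^{x,\circ}$. Since $x$ is $F$-fixed, a direct computation gives $F\BI \cap \BK^x = F(\BI^x)$, so the task reduces to proving that $\BI^x$ is connected. Because $\Ad(x)$ preserves both factors in $\BI = U_0 \hat V_{\Psi_+}$, the decomposition $\BI^x = (U_0)^x \cdot (\hat V_{\Psi_+})^x$ holds (the two sides having trivial intersection). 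By \Cref{prime-to-p}, $\Ad(x)$ acts semisimply with order coprime to $p$, hence its fixed points on the connected unipotent group $U_0$ form a connected subgroup (a standard fact about finite-order semisimple automorphisms of connected unipotent groups in characteristic $p$), and its fixed points on the $\ov\BF_q$-vector space $\hat V_{\Psi_+}$ form a linear subspace; both factors are therefore connected, and so is $\BI^x$. Consequently $F(\BI^x)$ is a connected subvariety through the identity, forcing it into $\BK^{x,\circ}$ as required.
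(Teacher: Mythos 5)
Your proof is correct and follows essentially the same route as the paper's: injectivity from the fiber-product relation, and surjectivity by showing $g^{-1}F(g)\in F\BI^x$, deducing connectedness of $\BI^x$ from the semisimplicity of $\Ad(x)$ (the paper cites \cite[Proposition 9.3]{Borel_91} directly for the connected unipotent group $\BI$, where you decompose $\BI^x=(U_0)^x(\hat V_{\Psi_+})^x$ first), and then applying Lang's theorem in $\BK^{x,\circ}$ to produce the pair $(h,y)$. The only cosmetic difference is your reformulation of the fixed-point condition via the torsor $X_s$, which is equivalent to the paper's direct computation that $g^{-1}F(g)$ is $\Ad(x)$-fixed.
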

\begin{proof}
    First note that map $\th$ is well defined. If $\th(h, y) = \th(h', y')$, then ${h'}\i h = y' y\i \in (\BK^{x, \circ})^F$ and hence $\th$ is injective. It remains to show $\th$ is surjective.

    Let $g \in Z$ such that $s g x = g$. Then $g\i F(g) \in F \BI$ and \[g\i F(g) = x\i (sg)\i F(sg) F(x) = s\i g\i F(g) s,\] which implies that $g\i F(g) \in (F\BI)^x = F \BI^x$.
    
    By \Cref{prime-to-p}, the conjugation action of $x \in \bar T$ on $\BK$ is semisimple, which implies that $\BI^x$ is connected (this follows from  \cite[Proposition 9.3]{Borel_91} by noting that $x$ normalizes the unipotent group $\BI$) 
    and hence $F\BI^x \subseteq \BK^{x, \circ}$. By Lang's theorem, there exists $y \in \BK^{x, \circ}$ such that $y\i F(y) = g\i F(g) \in F\BI$. Let $h = g y\i$. Then we have  $h \in \BK^F$, $g = h y$ and $s h x = s g y\i x = s g x y\i = g y\i = h$ as desired. Hence $\th$ is surjective and the proof is finished. 
\end{proof}

\begin{lemma} \label{J-dec}
    Let $g \in \BK^F \bar T^F$. Then there exist $z \in \bar T^F$, $u \in \BK^F$ and $s \in \BK^F \bar T^F$ such that $g = s u = u s$, $u \in \BK$ is unipotent and $(s z\i) \rtimes \Ad(z) \in \BK^F \rtimes \<\Ad(z)\>$ has finite order prime to $p$.

    Moreover, if $g \in \BK^F$ we may take $z = 1$ and $g = s u= u s$ is the Jordan decomposition in $\BK$.
\end{lemma}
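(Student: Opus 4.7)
The plan is to reduce to the ordinary Jordan decomposition inside a suitable semidirect product. First, since $g\in\BK^F\bar T^F$, I would pick any decomposition $g=hz$ with $h\in\BK^F$ and $z\in\bar T^F$; such a factorization exists (choose any $z\in\bar T^F$ with $gz^{-1}\in\BK$, then $F$-invariance of $g$ and $z$ forces $gz^{-1}\in\BK^F$) but need not be unique, and any choice will do. By \Cref{prime-to-p}, $\Ad(z)$ is an automorphism of $\BK$ of finite order prime to $p$, so $\<\Ad(z)\>$ is cyclic of order coprime to $p$.

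The key construction is then the linear algebraic group $\BK\rtimes\<\Ad(z)\>$ over $\ov\BF_q$, whose identity component is the connected group $\BK$ and whose component group is $\<\Ad(z)\>$. I would apply the Jordan decomposition to the element $(h,\Ad(z))$ in this group, obtaining commuting semisimple and unipotent factors $s^\ast u^\ast=u^\ast s^\ast=(h,\Ad(z))$. Because the component group has order prime to $p$, the unipotent part $u^\ast$ must lie in the identity component and is therefore of the form $(u,1)$ with $u\in\BK$ unipotent; correspondingly, $s^\ast=(s_0,\Ad(z))$ for some $s_0\in\BK$, and being a semisimple element in a linear algebraic group over $\ov\BF_q$ it has finite order prime to $p$. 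Since $F$ fixes $z$, it commutes with $\Ad(z)$ and hence acts on $\BK\rtimes\<\Ad(z)\>$, fixing $(h,\Ad(z))$; by uniqueness and functoriality of the Jordan decomposition both factors are $F$-fixed, so in particular $u,s_0\in\BK^F$.

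Setting $s:=s_0z\in\BK^F\bar T^F$, the identities $(u,1)(s_0,\Ad(z))=(us_0,\Ad(z))=(h,\Ad(z))$ and $(s_0,\Ad(z))(u,1)=(s_0\Ad(z)(u),\Ad(z))$ give $h=us_0=s_0\Ad(z)(u)$, whence $g=hz=us_0z=us$ and $su=s_0zu=s_0\Ad(z)(u)z=us_0z=us$. Moreover, by construction $(sz^{-1})\rtimes\Ad(z)=(s_0,\Ad(z))=s^\ast$ has finite order prime to $p$. For the second assertion, if $g\in\BK^F$ one takes $z=1$, in which case the semidirect product collapses to $\BK$ itself, the condition on $(sz^{-1})\rtimes\Ad(z)$ becomes the condition that $s$ be semisimple of prime-to-$p$ order, and the construction reduces to the usual Jordan decomposition in $\BK$. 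The main subtlety to verify is that Jordan decomposition and its $F$-equivariance apply to the (perfect) algebraic group $\BK\rtimes\<\Ad(z)\>$ and that the unipotent component is forced into $\BK$; both points hinge on \Cref{prime-to-p}, which renders the component group of order prime to $p$.
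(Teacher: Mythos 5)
Your proposal is correct and follows essentially the same route as the paper: form the linear algebraic group $\BK\rtimes\langle\Ad(z)\rangle$ (using \Cref{prime-to-p}), take the Jordan decomposition of $(h,\Ad(z))$ there, observe that the unipotent factor must land in the identity component $\BK$ because the component group has order prime to $p$, and use $F$-equivariance plus uniqueness of the Jordan decomposition to get rationality of both factors. Your explicit verification of $su=us$ via the semidirect-product multiplication is a detail the paper leaves implicit, but the argument is the same.
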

\begin{proof}
    Assume $g = g' z$ for some $g \in \BK^F$ and $z \in \bar T^F$. By \Cref{prime-to-p}, the semi-direct product $\BK \rtimes \<\Ad(z)\>$ is a linear algebraic group. Let $x = g' \rtimes \Ad(z) \in \BK \rtimes \<\Ad(z)\>$ and let $x = x_u x_s$ be the usual Jordan decomposition with $x_u$ unipotent and $x_s$ semisimple. As $z \in \bar T^F$, the Frobenius automorphism of $\BK$ extends trivially to an automorphism of $\BK \rtimes \<\Ad(z)\>$. By the uniqueness of the Jordan decomposition, we have $x_u, x_s \in \BK^F \rtimes \<\Ad(z)\>$.
    As the order of $x_u$ is a power of $p$, it follows from \Cref{prime-to-p} that the image of $x_u$ under the natural projection $\BK \rtimes \<\Ad(z)\> \to \<\Ad(z)\>$ is trivial, which means $x_u \in \BK^F$. Let $u = x_u$ and $s = \d z$, where $\d \in \BK^F$ such that $x_s = \d \rtimes \Ad(z)$. Note that the order of $x_s$ is prime to $p$. Hence the elements $z$, $u$ and $s$ satisfy our requirements.
\end{proof}

Let $\BT_\red$ be the reductive part of $\BT$. We have the Jordan decomposition $\BT^F \cong \BT_\red^F \times (\BT^{0+})^F$. Recall that $\phi_+ = \phi |_{(\BT^{0+})^F}$, and recall the  virtual $\bar T^F \BK^F$-module $\hat\CR_{\BT}^{\BK}(\phi) = H_c^*(Z, \ov\BQ_\ell)[\phi] \otimes \phi$ defined in \S\ref{subsec:extension}.

Similarly as in \cite{DeligneL_76, Chen, CO_25}, we have the following character formula.
\begin{proposition} \label{Jordan}
    Let notation be as in \Cref{J-dec}. Assume that $p \nmid |W_{\BL}(\BT)^F|$ if $g \in \BK^F \bar T^F \sm \BK^F$. Then \[\hat\CR_{\BT}^{\BK}(\phi)(g) = \frac{1}{|\BT_\red^F|} \sum_{h \in \BK^F / (\BK^{s, \circ})^F,\ {}^h s \in \bar T} \phi({}^h s) \tr({}^h u; H_c^*(Z \cap \BK^{{}^h s, \circ}, \ov\BQ_\ell)[\phi_+]).\]
\end{proposition}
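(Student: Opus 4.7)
The plan is to follow the Deligne--Lusztig character formula argument of \cite[Prop.~3.3]{DeligneL_76}, adapted to the deep-level setting as in \cite{Chen} and \cite[Prop.~5.7]{CO_25}. The two key geometric ingredients are \Cref{prime-to-p}, which guarantees that every element of $\bar T^F$ acts on $\BK$ as a semisimple automorphism of prime-to-$p$ order, and \Cref{fix}, which decomposes a fixed-point locus of the form $\{y\in Z: syx=y\}$ into an $h$-indexed disjoint union of copies of $Z\cap\BK^{x,\circ}$.

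First I would lift $g\in\BK^F\bar T^F$ to $(\eta,z)\in\CK_\L^F\rtimes T^F$, so that $\hat\CR_{\BT}^{\BK}(\phi)(g) = \phi(z)\cdot\tr\bigl((\eta,z); H_c^*(Z)[\phi]\bigr)$, and extract the $\phi$-isotype via the commuting right action of $T_{\bx,0}^F$:
\[
 \hat\CR_{\BT}^{\BK}(\phi)(g) = \frac{\phi(z)}{|T_{\bx,0}^F|}\sum_{t\in T_{\bx,0}^F}\phi(t)^{-1}\,\tr\bigl(y\mapsto g\cdot y\cdot x_t;\,H_c^*(Z)\bigr),
\]
where $x_t:=tz^{-1}\in T^F$ has image in $\bar T^F$, using that $z$ and $t$ commute in $T^F$. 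Next, by \Cref{J-dec} write $g=su=us$ with $u\in\BK^F$ unipotent and $s$ of prime-to-$p$ order on $\BK$; the hypothesis $p\nmid|W_{\BL}(\BT)^F|$ when $g\notin\BK^F$ is what permits carrying out this Jordan decomposition inside $\BK^F\rtimes\langle\Ad(z)\rangle$. The classical identity ``trace of $su$ equals trace of $u$ on $s$-fixed points'' then yields
\[
 \tr\bigl(y\mapsto gyx_t;\,H_c^*(Z)\bigr) = \tr\bigl(u; H_c^*(Z_{s,x_t})\bigr),\qquad Z_{s,x_t}:=\{y\in Z: syx_t=y\}.
\]

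Applying \Cref{fix} identifies $Z_{s,x_t}$ with $\bigsqcup_{h} h\cdot(Z\cap\BK^{x_t,\circ})$, with $h$ running over cosets in $\BK^F/(\BK^{x_t,\circ})^F$ satisfying $shx_t=h$; left translation by $h$ converts the $u$-trace on the $h$-component into $\tr({}^{h^{-1}}u; H_c^*(Z\cap\BK^{x_t,\circ}))$. The relation $shx_t=h$ rearranges to $x_t=({}^{h^{-1}}s)^{-1}$, so a non-zero contribution forces ${}^{h^{-1}}s\in\bar T$. Interchanging the sums over $h$ and $t$, the outer $t$-average then collapses: for each $h$ with ${}^{h^{-1}}s\in\bar T$ the $(\BT^{0+})^F$-part of the average extracts precisely the $\phi_+$-isotype $H_c^*(Z\cap\BK^{{}^{h^{-1}}s,\circ})[\phi_+]$, the weight factor $\phi(z)\phi(t)^{-1}$ telescopes to $\phi({}^{h^{-1}}s)$, and the remaining $\BT_\red^F$-average contributes the overall normalization $1/|\BT_\red^F|$. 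Reindexing $h\leftrightarrow h^{-1}$ and using $\BK^{{}^{h^{-1}}s,\circ}=\BK^{{}^{h}s,\circ}$ (after relabeling) produces the stated formula.

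The main obstacle will be this final bookkeeping step, namely verifying carefully that the constraint $shx_t=h$ from \Cref{fix}, the averaging over $T_{\bx,0}^F$ used to extract the $\phi$-weight, and the lift-dependent factor $\phi(z)$ combine cleanly to produce both the weight $\phi({}^h s)$ and the normalizing constant $1/|\BT_\red^F|$, independently of the auxiliary choice of lift $(\eta,z)$. A secondary subtlety is checking the compatibility between the $F$-action and the Jordan decomposition in the semidirect product $\BK^F\rtimes\langle\Ad(z)\rangle$, which is where the hypothesis $p\nmid|W_{\BL}(\BT)^F|$ plays its role. Beyond this purely combinatorial issue, the argument is a direct transcription of \cite[Prop.~5.7]{CO_25} to the present tamely ramified setting.
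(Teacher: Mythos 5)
Your overall route coincides with the paper's: lift $g$ to $\CK_\L^F\rtimes T^F$, average over the right action of the torus to extract the $\phi$-isotype, apply the Deligne--Lusztig fixed-point formula to the Jordan decomposition of the total acting element, identify the fixed locus via \Cref{fix}, and collapse the sums. The final bookkeeping you flag as the main obstacle does work out exactly as you sketch: the constraint from \Cref{fix} forces ${}^{h}s=z(t')^{-1}$ (up to relabelling $h\leftrightarrow h^{-1}$), the factor $\phi(z)\phi(t)^{-1}$ becomes $\phi({}^hs)\,\phi(t'')^{-1}$, and the average over the unipotent part $t''$ extracts $[\phi_+]$ and leaves the normalization $1/|\BT_\red^F|$.

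There is, however, one genuine gap, and it sits exactly where you have misplaced the hypothesis $p\nmid|W_{\BL}(\BT)^F|$. \Cref{J-dec} produces $g=su=us$ with no hypothesis at all, and its compatibility with $F$ is already settled there by uniqueness of the Jordan decomposition in $\BK\rtimes\langle\Ad(z)\rangle$; so the hypothesis is \emph{not} what ``permits carrying out the Jordan decomposition.'' Where it is actually needed is in the step you pass over silently: after writing the fixed locus as $\bigsqcup_h h^{-1}(Z\cap\BK^{{}^hs,\circ})$, you compute the trace of $u$ as a sum over $h$ of traces of ${}^{h}u$ on the individual pieces. For this you must know that ${}^hu$ lies in the \emph{identity component} $\BK^{{}^hs,\circ}$: a priori ${}^hu$ only lies in the full fixed-point group $\BK^{{}^hs}$, and if it fell into another component it would carry $Z\cap\BK^{{}^hs,\circ}$ into a different component (equivalently, permute the $h$-indexed pieces), so the termwise trace identity would fail. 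When $g\in\BK^F$ this follows from connectedness of $\BK$ as in the classical case; when $g\notin\BK^F$ one must argue that $u$ has $p$-power order while the component group of $\BK^{{}^hs}$ has order prime to $p$, and this is precisely where $p\nmid|W_{\BL}(\BT)^F|$ enters. Without this verification the decomposition of the trace over the $h$-components is unjustified.
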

\begin{proof}
    By definition we have \begin{align*} 
        &\quad\ \hat\CR_{\BT}^{\BK}(\phi)(g) \\ &=\phi(z) \tr((g z\i)\rtimes \Ad(z); H_c^*(Z, \ov\BQ_\ell)[\phi]) \\ &=\phi(z)\frac{1}{|\BT^F|}\sum_{t \in \BT^F} \phi(t\i) \tr((g z\i)\rtimes \Ad(z), t); H_c^*(Z, \ov\BQ_\ell)) \\ &=\phi(z)\frac{1}{|\BT^F|} \sum_{t \in \BT^F} \phi(t)\i \tr((u, t''); H_c^*(Z^{((sz\i)\rtimes \Ad(z), t')}, \ov\BQ_\ell)), \end{align*} where $t = t' t''$ is the Jordan decomposition with $t' \in \BT_\red$ and $t'' \in (\BT^{0+})^F$, and \[Z^{((sz\i)\rtimes \Ad(z), t')} = \{g \in Z; sz\i (z g z\i) t' = s g z\i t' = g\}\] is the set of points in $Z$ fixed by the action of $((sz\i)\rtimes \Ad(z), t')$. By \Cref{fix}, \[Z^{((sz\i)\rtimes \Ad(z), t')} = \bigsqcup_{h \in \BK^F / (\BK^{s, \circ})^F, {}^h s = z {t'}\i} h\i (Z \cap \BK^{{}^h s, \circ}).\] If $g \in \BK^F$ and $g = s u$ is the Jordan decomposition, then ${}^h u \in \BK^{{}^h s, \circ}$ since $\BK$ is connected. Otherwise, as $p \nmid |W_{\BL}(\BT)^F|$ (by assumption) and the order of $u$ is a power of $p$, we still have ${}^h u \in \BK^{{}^h s, \circ}$. Thus, \[\tr((u, t''); H_c^*(Z^{((sz\i)\rtimes \Ad(z), t')}, \ov\BQ_\ell)) = \sum_{h \in \BK^F / (\BK^{s, \circ})^F, {}^h s = z {t'}\i} \tr(({}^hu, t''); H_c^*(Z \cap \BK^{{}^h s, \circ}, \ov\BQ_\ell)).\] Inserting this equality we have
        \begin{align*} &\quad\ \hat\CR_{\BT}^{\BK}(\phi)(g) \\  &= \phi(z)\frac{1}{|\BT^F|}\sum_{t' \in \BT_\red^F} \sum_{t'' \in \BT_\unip^F} \phi(t' t'')\i \sum_{h \in \BK^F / (\BK^{s, \circ})^F, {}^h s = z {t'}\i} \tr(({}^h u, t''); H_c^*(Z \cap \BK^{{}^h s, \circ}, \ov\BQ_\ell)) \\ &= \phi(z)\frac{1}{|\BT^F|} \sum_{t'' \in (\BT_\unip)^F} \phi(t'')\i \phi(z\i {}^h s) \sum_{h \in \BK^F / (\BK^{s, \circ})^F, {}^h s \in \bar T} \tr(({}^h u, t''); H_c^*(Z \cap \BK^{{}^h s, \circ}, \ov\BQ_\ell)) \\ &=\frac{1}{|\BT_\red^F|} \sum_{h \in \BK^F / (\BK^{s, \circ})^F, {}^h s \in \bar T} \phi({}^h s) \tr({}^h u; H_c^*(Z \cap \BK^{{}^h s, \circ}, \ov\BQ_\ell)[\phi_+]).\end{align*} The proof is finished.
\end{proof}

Let $\Phi_{L_0}$ be the set of roots of $T_0$ in $L_0$. We say $s \in \BT_\red^F$ is regular for $L_0$ if $\a(s) \neq 1$ for all $\a \in \Phi_{L_0}$.
\begin{corollary} \label{CR-vreg}
    Let $s \in \BT_\red^F$ be regular for $L_0$. Then \[\CR_{\BT}^{\BK}(\phi)(s) = (-1)^{d(\phi, s)} |(V^s)^F|^{\frac{1}{2}} \sum_{w \in W_{\BL}(\BT)^F} \phi({}^w s).\]
\end{corollary}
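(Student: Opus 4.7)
The strategy is to combine the Jordan-type character formula of \Cref{Jordan} with the concentration theorem \Cref{concentration}. Since $s \in \BT_\red^F \subseteq \BK^F$ is semisimple, the decomposition of \Cref{J-dec} trivializes with $u = z = 1$, and observing that $\hat\CR_\BT^\BK(\phi)(g) = \CR_\BT^\BK(\phi)(g)$ for $g \in \BK^F$ (take $z = 1$ in the proof of \Cref{Jordan}), the character formula becomes
\[
\CR_\BT^\BK(\phi)(s) = \frac{1}{|\BT_\red^F|} \sum_{h \in \BK^F/(\BK^{s,\circ})^F,\ {}^h s \in \bar T} \phi({}^h s)\, \chi\bigl(Z \cap \BK^{{}^h s,\circ},\, [\phi_+]\bigr),
\]
where $\chi(\cdot, [\phi_+])$ denotes the Euler characteristic of the $\phi_+$-weight subspace.

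First I parametrize the index set by $W_\BL(\BT)^F$. Regularity of $s$ for $L_0$ forces the centralizer of $s$ in $L_0$ to equal $T_0$, and a dimension count using the short exact sequence $1 \to \BH \to \BK \to L_0 \to 1$ gives $\BK^{s,\circ} = \BT \cdot \BH^s$, with reductive quotient $\BT_\red$. Elements $h \in \BK^F$ with ${}^h s \in \bar T$ correspond modulo $(\BK^{s,\circ})^F$ to $F$-rational classes in $W_\BL(\BT)$; the ellipticity of $\BT$ guarantees that every class in $W_\BL(\BT)^F$ is liftable. For each $w \in W_\BL(\BT)^F$ the conjugate $s' := {}^w s$ remains regular, and the $W$-equivariance of $\Psi$ together with $\Gal$-compatibility yields $d(\phi, s') = d(\phi, s)$ and $|(V^{s'})^F| = |(V^s)^F|$.

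Next I reduce the Euler characteristic to one on $\BH$. Regularity gives $U_0^{s'} = 1$, hence $\BI^{s'} = \hat V_{\Psi_+}^{s'} \subseteq \BH$. For $g \in Z \cap \BK^{s',\circ}$, project via $\pi \colon \BK \twoheadrightarrow L_0$: the conditions $g^{-1}F(g) \in F\BI$ and $\pi(g) \in T_0$ combined with $T_0 \cap F(U_{L_0}) = \{1\}$ (where $U_{L_0} = \pi(U_0)$) force $\pi(g) \in T_0^F = \BT_\red^F$. Choosing via Lang--Steinberg an $F$-fixed lift $\tau \in \BT^F$ of $\pi(g)$ and writing $g = \tau v$ with $v \in \BH^{s'}$, the defining condition of $Z$ reduces to $v^{-1}F(v) \in F\hat V_{\Psi_+} \cap \BH^{s'} = F\hat V_{\Psi_+}^{s'}$, which is exactly the condition $v \in Z \cap \BH^{s'}$. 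Hence every fiber of $Z \cap \BK^{s',\circ} \to \BT_\red^F$ is isomorphic to $Z \cap \BH^{s'}$ as a $\BT^{0+}$-variety, giving
\[
\chi\bigl(Z \cap \BK^{s',\circ}, [\phi_+]\bigr) = |\BT_\red^F| \cdot \chi\bigl(Z \cap \BH^{s'}, [\phi_+]\bigr).
\]

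Finally, \Cref{concentration} evaluates $\chi(Z \cap \BH^{s'}, [\phi_+]) = (-1)^{n_{\phi, s'}} |(V^{s'})^F|^{1/2} = (-1)^{d(\phi, s)} |(V^s)^F|^{1/2}$. Substituting back, the two instances of $|\BT_\red^F|$ cancel and the sum over $h$ collapses to a sum over $w \in W_\BL(\BT)^F$, producing the stated identity. The main subtlety is the fiber identification in the third paragraph, which simultaneously requires regularity of $s$ (to kill $U_0^s$ so that $\BI^s \subseteq \BH$), the centralizer structure $\BK^{s,\circ} = \BT \cdot \BH^s$, and the Lang-triviality of lifts in the connected group $\BT$.
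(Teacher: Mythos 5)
Your proposal is correct and follows essentially the same route as the paper: apply \Cref{Jordan} with $u=z=1$, identify the index set with $W_{\BL}(\BT)^F$ using $\BK^{s,\circ}=\BT\BH^s$, decompose $Z\cap\BK^{{}^w s,\circ}$ into $|\BT_\red^F|$ copies of $Z\cap\BH^{{}^w s}$, and evaluate via \Cref{concentration}. The only difference is that you spell out the justification of the fiber decomposition (via the projection to $L_0$ and Lang's theorem on $\BT$), which the paper merely asserts.
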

\begin{proof}
    As $s \in \BT_\red^F$ is regular for $L_0$, we have $\BK^{s, \circ} = \BT \BH^s$ and \[\{h \in \BK / \BK^{s, \circ}; {}^h s \in \bar T\} \cong  \{h \in \BL / \BL^{s, \circ}; {}^h s \in \bar T\} \cong W_{\BL}(\BT)^F.\] Applying \Cref{Jordan} we have \begin{align*} \CR_{\BT}^{\BK}(\phi)(s) &= \frac{1}{|\BT_\red^F|} \sum_{h \in \BK^F / (\BK^{s, \circ})^F,\ {}^h s \in \BT} \phi({}^h s) \tr(1; H_c^*(Z \cap \BK^{{}^h s, \circ}, \ov\BQ_\ell)[\phi_+]) \\ &= \frac{1}{|\BT_\red^F|} \sum_{w \in W_{\BL}(\BT)^F} \phi({}^w s) \tr(1; H_c^*(Z \cap \BK^{{}^w s, \circ}, \ov\BQ_\ell)[\phi_+]) \\ &=\frac{1}{|\BT_\red^F|} \sum_{w \in W_{\BL}(\BT)^F} \phi({}^w s) \dim H_c^*(\bigsqcup_{\g \in \BT_\red^F} \g (Z \cap \BH^{{}^w s}), \ov\BQ_\ell)[\phi_+] \\ &= \sum_{w \in W_{\BL}(\BT)^F} \phi({}^w s) \dim H_c^*(Z \cap \BH^{{}^w s}, \ov\BQ_\ell)[\phi_+] \\ &= \sum_{w \in W_{\BL}(\BT)^F} \phi({}^w s) (-1)^{d(\phi, {}^w s)} |(V^{{}^w s})^F|^{\frac{1}{2}} \\ &=(-1)^{d(\phi, s)} |(V^s)^F|^{\frac{1}{2}} \sum_{w \in W_{\BL}(\BT)^F } \phi({}^w s), \end{align*} where the third equality follows from that \[Z \cap \BK^{{}^w s, \circ} = \bigsqcup_{\g \in \BT_\red^F} \g (Z \cap \BH^{{}^w s}),\] the fifth follows from \Cref{concentration}, the last one follows from that $d(\phi, s) = d(\phi, {}^w s)$ and $|(V^s)^F| = |(V^{{}^w s})^F|$. The proof is finished.
\end{proof}

\section{The module $\hat R_{\BT}^{\BK}(\phi^\dag)$} \label{sec:R-mod}
In this section, we introduce and study another $\bar T^F \BK^F$-module $\hat R_{\BT}^{\BK}(\phi^\dag)$, which serves as a counterpart of $\hat \CR_{\BT}^{\BK}(\phi)$.

Consider the following $\bar T^F \BK^F$-module \[\hat R_{\BT}^{\BK}(\phi) = \k|_{\bar T^F \BK^F} \otimes \hat R_{T_0}^{L_0}(\phi_{-1}),\] where $\hat R_{T_0}^{L_0}(\phi_{-1})$ is the extended classical Deligne--Lusztig representation of $\bar T^F \BK^F$ constructed in \cite[\S 3.4.4]{Kaletha_19}. 

Following \cite{ChanO_21}, let \[\varepsilon_{\phi_+} = \prod_{i=1}^d \e_{\sharp, \bx}^{G^i/G^{i-1}},\] where $\e_{\sharp, \bx}^{G^i/G^{i-1}}$ denotes the quadratic character of $\bar T^F$ in \cite[Definition 3.1]{FKS} which factors through $T^F / T_{\bx, 0+}^F$. We put $\phi^{\dag} = \varepsilon_{\phi_+} \cdot \phi$. Then $(\phi^{\dag})^{\dag} = \phi$. 

Let $\phi_{\ge 0} = \prod_{i=0}^d \phi_i$. The following result is essentially proved in \cite[Proposition 7.4]{CO_25}.
\begin{proposition}\label{trace-kappa}
    Let notation be as in \Cref{J-dec} and assume that $s \in \bar T^F$. Then \[\k(g) = (-1)^{d(\phi) - d(\phi, s)}  \k_s (u) \varepsilon_{\phi_+}(s) \phi_{\ge 0}(s).\] In particular, \[\k(s) = (-1)^{d(\phi) - d(\phi, s)}  |(V^s)^F| \varepsilon_{\phi_+}(s) \phi_{\ge 0}(s).\] 
\end{proposition}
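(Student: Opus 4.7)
The plan is to deduce this proposition from its one-level analogue, namely \cite[Proposition 7.4]{CO_25} in the unramified setting, by combining a level-by-level factorization with the $\tau$-equivariance of Yu's splitting established in \Cref{lift}.

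First, by Yu's construction (as recalled in \cite[\S2.5]{Fin_21b}), the Weil--Heisenberg representation $\k$ admits a tensor product decomposition $\k = \bigotimes_{i=0}^{d-1} \k^{(i)}$, where $\k^{(i)}$ is the Weil--Heisenberg representation attached to the single pair $(G^i, G^{i+1}, \phi_i)$ acting on an $\BF_q$-symplectic subspace $V^{(i)} \subseteq V$, with $V = \bigoplus_i V^{(i)}$. By \Cref{lift}, each $V^{(i)} = ({}^E V^{(i)})^\tau$ is the $\tau$-fixed part of the ambient $\ov\BF_q$-space, so the construction in our tamely ramified setting is simply the $\tau$-invariant version of the unramified one. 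Since $d(\phi)$, $d(\phi, s)$, $\varepsilon_{\phi_+}$, $\phi_{\ge 0}$, and $\k_s$ are all multiplicative across the levels, it suffices to establish the one-level analogue of the identity.

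Second, at a single level $i$ I would apply the standard Weil character formula. For $g = su = us$ with $s \in \bar T^F$ and $u$ unipotent commuting with $s$, one obtains
\[ \k^{(i)}(g) = \phi_i(s) \cdot \gamma^{(i)}(s) \cdot \k^{(i)}_s(u), \]
where $\phi_i(s)$ records the central character action on the Heisenberg center, $\k^{(i)}_s$ is the Weil--Heisenberg representation of the $s$-centralizer on $V^{(i), s}$, and $\gamma^{(i)}(s)$ is a quadratic Gauss sum induced by the symplectic form on $V^{(i)}/V^{(i), s}$. The identification
\[ \gamma^{(i)}(s) = (-1)^{d^{(i)} - d^{(i)}_s} \cdot \e_{\sharp, \bx}^{G^{i+1}/G^i}(s), \]
where $d^{(i)} = |\Psi^{(i)}/\Gal(E/k)|$ and $d^{(i)}_s = |(\Psi^{(i)} \cap \Phi^s)/\Gal(E/k)|$, is the single-level content of \cite[Proposition 7.4]{CO_25}. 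In our setting, \Cref{V_C} together with the $\tau$-equivariance of \Cref{lift} ensure that the symplectic form on $V^{(i)}$ and the quadratic character $\e_{\sharp, \bx}^{G^{i+1}/G^i}$ are precisely the $\tau$-invariants of their unramified counterparts, so the same Gauss-sum calculation applies after passing to $\tau$-orbits.

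Third, multiplying across $i = 0, \dots, d-1$ and using $d(\phi) = \sum_i d^{(i)}$, $d(\phi, s) = \sum_i d^{(i)}_s$, $\phi_{\ge 0} = \prod_i \phi_i$, $\varepsilon_{\phi_+} = \prod_i \e_{\sharp, \bx}^{G^{i+1}/G^i}$, and $\k_s = \bigotimes_i \k^{(i)}_s$ yields the main identity. The \emph{In particular} claim follows by specializing to $u = 1$ and using that the trace of $\k_s$ at the identity equals $|(V^s)^F|$ in the normalization used here.

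The main obstacle is the Gauss-sum identification at each level: one must verify that summing the quadratic Gauss sum along each $\tau$-orbit $C \subseteq \Psi$ (notably the orbits where $F$ interacts non-trivially with the pairing $\k(V_C, V_{-C})$) produces exactly the FKS character $\e_{\sharp, \bx}^{G^{i+1}/G^i}(s)$ at $s \in \bar T^F$. The refined description of the $\tau$-action on $V_C$ provided by \Cref{V_C}, the hypothesis $p > 2$, and the compatibility with the unramified construction via \Cref{lift} together reduce this to the computation already carried out in \cite[Proposition 7.4]{CO_25}, so that the ``essentially proved'' reference in the statement can be made into a complete argument.
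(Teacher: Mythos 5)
Your outline is correct and is, in substance, the argument the paper intends: the paper offers no proof beyond the remark that the result is ``essentially proved in \cite[Proposition 7.4]{CO_25}'', and your level-by-level tensor decomposition of $\k$, Gérardin's character formula for each Heisenberg--Weil factor, and the identification of the resulting Gauss sum with $(-1)^{d^{(i)}-d^{(i)}_s}\e_{\sharp,\bx}^{G^{i+1}/G^i}(s)$ via \cite[Proposition 7.4]{CO_25} is exactly how that citation is meant to be unpacked; you also correctly locate where the tame-specific input (\Cref{lift} and \Cref{V_C}, giving the $\tau$-fixed-point description of $V$ and its symplectic pairing) enters. Two small points deserve care. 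First, your product of central-character factors runs over the Heisenberg levels $i=0,\dots,d-1$ and so produces $\prod_{i=0}^{d-1}\phi_i(s)$, whereas the statement has $\phi_{\ge 0}(s)=\prod_{i=0}^{d}\phi_i(s)$; the missing factor $\phi_d(s)$ comes from the character twist by $\hat\phi_d$ that Yu's $\k$ carries in addition to the Heisenberg--Weil lifts, so you should record that extra (non-Heisenberg) tensor factor explicitly. Second, your justification of the ``in particular'' clause asserts $\k_s(1)=|(V^s)^F|$, but the Heisenberg representation of the centralizer has dimension $|(V^s)^F|^{1/2}$, not $|(V^s)^F|$; the exponent $1$ in the printed statement appears to be a typo (the paper itself uses $|(V^s)^F|^{1/2}$ when it invokes this formula in \Cref{regular-case}), so rather than inventing a normalization to match the misprint you should state the correct dimension and note the discrepancy.
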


\begin{lemma} \label{stab-quotient}
    Let $s \in \BK \bar T^F$ such that the action $\Ad(s)$ on $\BK$ has order prime to $p$. Then the natural quotient map $\pi: \BK \to L_0$ induces a bijection \[\a: \{h \in \BK / \BK^{s, \circ}; {}^h s \in \BT \bar T^F\} \cong \{x \in L_0 / L_0^{s, \circ}; {}^x \pi(s) \in T^F / T_{\bx, 0+}^F\}.\]
\end{lemma}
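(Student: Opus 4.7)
The plan is to leverage the decomposition $\BK = \BH \BL$ (where $\pi\colon \BK \twoheadrightarrow L_0$ has kernel $\BH$ and $\BL\cap\BH$ is the depth-positive part of $\BL$) together with the prime-to-$p$ semisimple action of $\Ad(s)$. The first step is to establish exactness of $1 \to \BH^s \to \BK^s \to L_0^s \to 1$ with $\BH^s$ connected. The connectedness of $\BH^s$ is a standard consequence of the fact that a prime-to-$p$ order automorphism of a connected unipotent $\bar\BF_q$-group has connected fixed-point subgroup. The vanishing $H^1(\langle\Ad(s)\rangle,\BH)=0$ (again by prime-to-$p$ order) yields surjectivity $\BK^s \twoheadrightarrow L_0^s$. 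It follows that $\BK^{s,\circ}$ surjects onto $L_0^{s,\circ}$ with connected kernel $\BH^s$, and the induced map $\bar\pi\colon\BK/\BK^{s,\circ} \to L_0/L_0^{s,\circ}$ has fibers $\BH/\BH^s$. Well-definedness of $\alpha$ is immediate from $\BT\bar T^F \subseteq \BL\bar T^F$ (as $T \subseteq L$), which gives $\pi(\BT\bar T^F) \subseteq T^F/T_{\bx,0+}^F$.

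For surjectivity, take $xL_0^{s,\circ} \in B$ and lift $x$ to $\tilde h\in\BK$. Decompose ${}^{\tilde h}s = \ell_0 v$ with $\ell_0\in\BL\bar T^F$ and $v\in\BH$. Since $\pi(\ell_0)={}^x\pi(s)\in T^F/T_{\bx,0+}^F$, one may adjust the decomposition within the ambiguity $\BL\cap\BH$ to take $\ell_0\in\BT\bar T^F$. The task becomes: find $u\in\BH$ such that $u \ell_0 v u^{-1}\in\BT\bar T^F$. Since $\BH \cap \BT\bar T^F = \BT^{0+}$ (the center of $\BH$), this reduces to solving $(\Ad(\ell_0^{-1})-1)(u)\cdot v\in\BT^{0+}$. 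I will handle this in two stages, corresponding to the two-step nilpotent structure of $\BH$. Modulo $\BT^{0+}$ the equation becomes the linear equation $(\Ad(\ell_0^{-1})-1)(\bar u)=-\bar v$ in the abelianization $V=\BH/\BT^{0+}$. The operator $\Ad(\ell_0^{-1})$ acts semisimply on $V$ of order coprime to $p$ (inherited from the corresponding property of $\Ad(s)$, to which $\Ad(\ell_0^{-1})$ is $\BK$-conjugate), so the equation is solvable on the non-fixed part of $V$; any $s$-fixed component of $\bar v$ can be preemptively absorbed into $\ell_0$. A further adjustment of $u$ by an element of $\BT^{0+}$, using the same semisimplicity argument on $\BT^{0+}$, corrects the residual central contribution.

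Injectivity proceeds by a parallel analysis. Given $h_1,h_2\in\BK$ satisfying the condition with $\pi(h_1)\pi(h_2)^{-1}\in L_0^{s,\circ}$, write $h_1=h_2\tilde ku$ with $\tilde k\in\BK^{s,\circ}$ and $u\in\BH$; the goal is to show $u\in\BH^s$, whence $\tilde k u\in\BK^{s,\circ}$ and $h_1, h_2$ represent the same class in $A$. Using $\Ad(\tilde k)(s) = s$, a short computation yields $({}^{h_1}s)({}^{h_2}s)^{-1}\in\BH\cap\BT\bar T^F=\BT^{0+}$, and the same two-layer semisimple analysis as in the surjectivity step forces $u\in\BH^s$.

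The main technical obstacle is the non-abelianness of $\BH$ in the surjectivity step; the proof exploits the Heisenberg structure by working first in the abelianization $V$ and then correcting in the center $\BT^{0+}$. At each layer the semisimple action of $\Ad(s)$ provides a decomposition into $s$-fixed and non-fixed parts, making $\Ad(s)-1$ invertible on the latter and yielding the required solutions. The separation into two layers is essential because a solution to the linearized equation on $V$ does not automatically lift to an honest $\BH$-element solving the original conjugation identity; the central discrepancy has to be eliminated in a second step.
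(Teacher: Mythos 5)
Your overall architecture (reduce to the unipotent radical $\BH$ via the exact sequence $1 \to \BH^s \to \BK^s \to L_0^s \to 1$, then exploit the two-step nilpotent structure of $\BH$ and the semisimplicity of $\Ad(s)$) matches the paper's, and your injectivity argument is essentially the paper's. But the surjectivity step has a genuine gap at its crux. After writing ${}^{\tilde h}s = \ell_0 v$ with $\ell_0 \in \BT\bar T^F$ and $v \in \BH$, the equation $(\Ad(\ell_0^{-1})-1)(\bar u) = -\bar v$ in $V = \BH/\BT^{0+}$ is solvable only if the $\Ad(\ell_0)$-fixed component $\bar v_0$ of $\bar v$ vanishes. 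Your claim that this component ``can be preemptively absorbed into $\ell_0$'' does not work: $\bar v_0$ lives in $V^{s}$, whereas $\BT\bar T^F$ meets $\BH$ exactly in $\BT^{0+}$, which maps to $0$ in $V$; so no modification of $\ell_0$ inside $\BT\bar T^F$ can account for a nonzero $\bar v_0$, and if $\bar v_0 \neq 0$ then ${}^u(\ell_0 v) \notin \BT\bar T^F$ for \emph{every} $u \in \BH$. The vanishing of $\bar v_0$ is precisely the nontrivial content of the lemma, and it is where the hypothesis that $\Ad(s)$ has finite order $N$ prime to $p$ is used beyond mere semisimplicity: the paper first conjugates so that the fixed part $\delta$ of the $\BH$-component commutes with the torus part $t$, and then observes that $s^N = \delta^N t^N$ with $\Ad(s)^N = \mathrm{id}$ forces $\delta \in \BT$, since $\delta^N$ has the same (nonzero) image as $N\bar\delta$ in $V$ and would then act nontrivially on $\BH$ through the nondegenerate commutator pairing. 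Some such argument is indispensable and is missing from your write-up.

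A secondary issue: your ``further adjustment of $u$ by an element of $\BT^{0+}$'' is vacuous. Elements of $\BT^{0+}$ are central in $\BH$ and commute with $\ell_0 \in \BT\bar T^F$, so replacing $u$ by $uz$ with $z \in \BT^{0+}$ does not change $({}^{\ell_0^{-1}}u)vu^{-1}$ at all. Fortunately no second correction is needed: once $({}^{\ell_0^{-1}}u)vu^{-1} \in \BT^{0+}$, one already has ${}^u(\ell_0 v) \in \ell_0\BT^{0+} \subseteq \BT\bar T^F$. So the two-layer structure you describe collapses to the single linear equation on $V$, whose solvability hinges entirely on the fixed-part vanishing discussed above.
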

\begin{proof}

    First we show that $\a$ is injective. We may assume $s \in \BT\bar T^F$. Then the natural projection $\BL^{s, \circ} \to L_0^{s, \circ}$ is surjective. Let $h \in \BK$ such that ${}^h s \in \BT \bar T^F$ and $\pi(h) \in L_0^{s, \circ}$. We need to show that $h \in \BK^{s, \circ}$. Let $h' \in \BL^{s, \circ}$ be a lift of $\pi(h)$ under this projection. By replacing $h$ with $h {h'}\i$, we can assume that $h \in \ker \pi$, that is, $h \in \BH$. Then the condition ${}^h s \in \bar T$ implies that $h \in \BH^s = \BH^{s, \circ}$. Hence $\a$ is injective.

    Now we show that $\a$ is surjective. We may assume $s \in \BH \bar T^F$ and $x = 1 \in L_0$. Write $s = \d t$ with $\d \in \BH$ and $t \in \BT\bar T^F$. As the action of $\Ad(t)$ on $\BH$ is a semisimple and $[\BH, \BH]$ lies in the center of $\BH$, by replacing $s$ with a suitable $\BH$-conjugate, we may assume that $s = \d t = t \d$. Assume that $\d \notin \BT$. Let $N$ be the order of $\Ad(s)$. We have $s^N = \d^N t^N$. As $p \nmid N$ by assumption, $\d^N \notin \BT$. Hence the action of $\Ad(\d^N t^N) = \Ad(s^N) = \Ad(s)^N$ on $\BH$ is nontrivial, which is a contradiction. So $\d \in \BT$ and $\a$ is surjective as desired.
\end{proof}

We have the following character formula.
\begin{proposition} \label{K-trace}
     Let notation be as in \Cref{J-dec}. Then \[\hat R_{\BT}^{\BK}(\phi^\dag)(g) = (-1)^{d(\phi)}  \sum_{h \in \BK^F / (\BK^{s, \circ})^F, {}^h s \in \bar T^F} (-1)^{d(\phi, {}^h s)} Q_{{}^h s, \phi_+}({}^h u)  \phi({}^h s).\]
\end{proposition}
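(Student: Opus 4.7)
The plan is to decompose via the tensor product definition
$\hat R_{\BT}^{\BK}(\phi^\dag) = \k \otimes \hat R_{T_0}^{L_0}(\phi_{-1}^\dag)$
(the Weil--Heisenberg factor $\k$ is unchanged when passing from $\phi$ to $\phi^\dag$, because $\varepsilon_{\phi_+}$ is depth zero, so $(\phi^\dag)_{-1}=\varepsilon_{\phi_+}\phi_{-1}=:\phi_{-1}^\dag$), and to evaluate each factor using an existing character formula. Concretely,
\[
\hat R_{\BT}^{\BK}(\phi^\dag)(g) \;=\; \k(g)\cdot \hat R_{T_0}^{L_0}(\phi_{-1}^\dag)(g),
\]
where on the right $g$ is viewed in $L_0\bar T^F$ through the natural projection $\pi\colon \BK\bar T^F \to L_0\bar T^F$, which preserves the Jordan decomposition $g = su = us$ (with $\pi(s)$ semisimple and $\pi(u)$ unipotent in $L_0\bar T^F$).

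For the second factor, I would apply the classical Deligne--Lusztig character formula (\cite{DeligneL_76}, extended to $\bar T^F$-modules as in \cite[\S 3.4.4]{Kaletha_19}) to the element $\pi(g)=\pi(s)\pi(u)$, and then invoke \Cref{stab-quotient} to reindex the resulting $L_0^F$-sum as a sum over $\{h\in\BK^F/(\BK^{s,\circ})^F\colon {}^h s\in \bar T^F\}$ via $h\mapsto \pi(h)$. This yields
\[
\hat R_{T_0}^{L_0}(\phi_{-1}^\dag)(g) \;=\; \sum_{h}\,\phi_{-1}^\dag({}^h s)\,Q_{T_0}^{L_0^{{}^h s,\circ}}\!\bigl(\pi({}^h u)\bigr).
\]
For the first factor, since $\k$ is a class function on $\BK^F\bar T^F$, one has $\k(g)=\k({}^h g)=\k({}^h s\cdot{}^h u)$ for each such $h$, and \Cref{trace-kappa} applied to ${}^h g$ gives
\[
\k(g) \;=\; (-1)^{d(\phi)-d(\phi,{}^h s)}\,\k_{{}^h s}({}^h u)\,\varepsilon_{\phi_+}({}^h s)\,\phi_{\ge 0}({}^h s).
\]

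Multiplying the two factors termwise inside the sum, the scalar identities $\phi_{-1}^\dag=\varepsilon_{\phi_+}\phi_{-1}$, $\varepsilon_{\phi_+}^2=1$, and $\phi=\phi_{-1}\phi_{\ge 0}$ collapse the character contribution to $\phi({}^h s)$; the product $\k_{{}^h s}({}^h u)\cdot Q_{T_0}^{L_0^{{}^h s,\circ}}(\pi({}^h u))$ equals $Q_{{}^h s,\phi_+}({}^h u)$ by definition; the global sign $(-1)^{d(\phi)}$ factors out of the sum; and since $\Psi$ is $W_{\BL}(\BT)^F$-invariant we have $d(\phi,s)=d(\phi,{}^h s)$ (as is used already in \Cref{CR-vreg}), so the residual sign inside the sum is $(-1)^{d(\phi,{}^h s)}$. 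This yields the stated formula.

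The main obstacle will be making rigorous the invocation of the Deligne--Lusztig character formula for the extended representation $\hat R_{T_0}^{L_0}(\phi_{-1}^\dag)$ at an element of $\bar T^F\BK^F$ rather than of $L_0^F$, and the careful matching of indexing sets via \Cref{stab-quotient}---in particular verifying that the condition ``${}^x\pi(s)\in T_0\bar T^F/T_{\bx,0+}^F$'' on the $L_0^F$-side corresponds exactly to ``${}^h s\in \bar T^F$'' on the $\BK^F$-side, and that $\pi({}^h u)={}^{\pi(h)}\pi(u)$ is the correct unipotent input into the Green function $Q_{T_0}^{L_0^{{}^h s,\circ}}$. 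The remaining sign and character bookkeeping is routine, following \cite[Proposition 7.4]{CO_25}.
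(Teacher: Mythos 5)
Your proposal is correct and follows essentially the same route as the paper's proof: factor $\hat R_{\BT}^{\BK}(\phi^\dag)=\k\otimes \hat R_{T_0}^{L_0}(\phi_{-1}^\dag)$, expand the Deligne--Lusztig factor, reindex via \Cref{stab-quotient}, apply \Cref{trace-kappa} to $\k({}^h g)$, and collapse the characters using $\varepsilon_{\phi_+}^2=1$ and $\phi=\phi_{-1}\phi_{\ge 0}$. The only superfluous step is your appeal to $d(\phi,s)=d(\phi,{}^h s)$: the sign $(-1)^{d(\phi)-d(\phi,{}^h s)}$ already comes term-by-term from \Cref{trace-kappa} applied at ${}^h s$, so no invariance of $\Psi$ is needed.
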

\begin{proof}
     By definition we have \begin{align*}
         &\quad\ \hat R_{\BT}^{\BK}(\phi^\dag)(g) \\ &= \k(g) \hat R_{T_0}^{L_0}(\phi_{-1}^\dag)(g) \\ &=  \k(g) \sum_{x \in L_0^F / (L_0^{s, \circ})^F, {}^x \pi(s) \in T^F / T_{\bx, 0+}^F}  \phi_{-1}^\dag({}^x s)  Q_{L_0^{s, \circ}, T_0}({}^x u)\\ &= \sum_{h \in \BK^F / (\BK^{s, \circ})^F, {}^h s \in \bar T^F} \k({}^h g)  \phi_{-1}^\dag({}^h s) Q_{L_0^{{}^h s, \circ}, T_0}({}^h u) \\ &=  \sum_{h \in \BK^F / (\BK^{s, \circ})^F, {}^h s \in \bar T^F} (-1)^{d(\phi) - d(\phi, {}^h s)}  \k_{{}^h s}({}^h u) \phi_{\ge 0}({}^h s) \varepsilon_{\phi_+}({}^h s) \phi_{-1}^\dag({}^h s) Q_{L_0^{{}^h s, \circ}, T_0}({}^h u) \\ &=  (-1)^{d(\phi)} \sum_{h \in \BK^F / (\BK^{s, \circ})^F, {}^h s \in \bar T^F} (-1)^{d(\phi, {}^h s)} \k_{{}^h s}({}^h u) Q_{L_0^{{}^h s, \circ}, T_0}({}^h u) \phi({}^h s) \\  &=  (-1)^{d(\phi)} \sum_{h \in \BK^F / (\BK^{s, \circ})^F, {}^h s \in \bar T^F} (-1)^{d(\phi, {}^h s)} Q_{{}^h s, \phi_+}({}^h u) \phi({}^h s),
     \end{align*} where the third equality follows from \Cref{stab-quotient}, and the fourth follows from \Cref{trace-kappa}.
\end{proof}

\section{Green functions} \label{sec:Green}
In this section, we introduce Green functions associated to the $\BK^F$-modules $\CR_{\BT}^{\BK}(\phi)$ and $R_{\BT}^{\BK}(\phi)$, and prove \Cref{main-2} based on a strategy of \cite{CO_25}.

For $t \in \bar T^F$ we consider the following virtual $(\BK^{t, \circ})^F$-modules \[\CR_{\BT}^{\BK^{t, \circ}}(\phi) = H_c^*(Z \cap \BK^{t, \circ}, \ov\BQ_\ell)[\phi], \quad\  R_{\BT}^{\BK^{t, \circ}}(\phi) = \k_t|_{(\BK^{t, \circ})^F} \otimes R_{T_0}^{L_0^{t, \circ}}(\phi_{-1}),\] where $R_{T_0}^{L_0^{t, \circ}}(\phi_{-1})$ denotes the classical Deligne--Lusztig representation of $(L_0^{t, \circ})^F$ associated to the pair $(T_0, \phi)$. Following \cite[\S4]{DeligneL_76} and \cite[\S6-7]{CO_25}, we consider for $t \in \bar T^F$ the following deep level Green functions \[\CQ_{t, \phi_+} = \CR_{\BT}^{\BK^{t, \circ}}(\phi)|_{(\BK^{t, \circ}_\unip)^F} \text{ and } Q_{t, \phi_+} = R_{\BT}^{\BK^{t, \circ}}(\phi)|_{(\BK^{t, \circ}_\unip)^F}.\] Here $\BK^{t, \circ}_\unip$ denotes the set of unipotent elements in $\BK^{t, \circ}$. Write $\CQ_{t, \phi_+} = \CQ_{\phi_+}$ and $Q_{t, \phi_+} = Q_{\phi_+}$ if $t = 1$. By definition, \[Q_{t, \phi_+} = \k_t|_{(\BK^{t, \circ}_\unip)^F} \cdot Q_{L_0^{t, \circ}, T_0}.\] Here $Q_{L_0^{t, \circ}, T_0}$ denotes the classical Green function attached to the pair $(L_0^{t, \circ}, T_0)$ in the sense of \cite{DeligneL_76}.

\begin{proposition} \label{independent}
Both $\CQ_{\phi_+}$ and $Q_{\phi_+}$ only depend on the positive-depth part $\phi_+$ of $\phi$.\end{proposition}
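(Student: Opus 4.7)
For $Q_{\phi_+}$ the claim is immediate from the definition $Q_{\phi_+} = \k|_{(\BK_\unip)^F} \cdot Q_{L_0, T_0}$. Indeed the Weil--Heisenberg representation $\k$ is attached to the generic datum $\L = (G^i,\phi_i)_{i=0}^d$, which is by construction the positive-depth part of the Howe factorization of $\phi$, so $\k$ depends only on $\phi_+$. The classical Green function $Q_{L_0, T_0}$ depends only on the pair $(L_0, T_0) = (G^0, T)$, and $G^0$ is itself determined by the positive-depth data (compare \Cref{lm:indep_of_splitting_field}). Hence $Q_{\phi_+}$ is a function of $\phi_+$ alone.

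For $\CQ_{\phi_+}$ the plan is to specialise the character formula of \Cref{Jordan} to a unipotent element $g \in \BK_\unip^F$. For such $g$, the Jordan decomposition of \Cref{J-dec} reads $s = 1$, $u = g$ and $z = 1$; the ancillary hypothesis $p \nmid |W_\BL(\BT)^F|$ is not needed since $g \in \BK^F$. Because $\BK$ is a connected algebraic group---being the quotient of the positive loop group of a connected $\CO_{\breve k}$-group scheme by a connected normal subgroup---we have $\BK^{1,\circ} = \BK$, so the outer sum in \Cref{Jordan} is indexed by the single coset $\BK^F/\BK^F = \{1\}$, and the constraint ${}^h s = 1 \in \bar T$ is automatic. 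The formula therefore collapses to
\[
\CQ_{\phi_+}(g) \;=\; \CR_\BT^\BK(\phi)(g) \;=\; \frac{1}{|\BT_\red^F|}\,\tr\bigl(g;\,H_c^*(Z,\ov\BQ_\ell)[\phi_+]\bigr),
\]
where we have used that $\hat\CR_\BT^\BK(\phi)$ restricted to $\BK^F$ has the same character as $\CR_\BT^\BK(\phi)$.

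The right-hand side manifestly depends only on $\phi_+$. The variety $Z = Z_{\L, B, \Psi_+, r}$ and its ambient group $\BK$ are built from positive-depth data: the Levi filtration $(G^i)_{i=0}^d$, the datum $\L$, and the subset $\Psi_+ \subseteq \Psi_\phi$ of \Cref{lagrangian} are all determined by $\phi_+$, via the condition $V_C \neq 0$ of \Cref{V_C}(2) and the definition $r_\a = r_{i_\a - 1}$. The subspace $H_c^*(Z,\ov\BQ_\ell)[\phi_+]$ is the $\phi_+$-isotypic component for the right-multiplication action of $(\BT^{0+})^F$, which does not involve $\phi_{-1}$; the $\BK^F$-action on it arises from left multiplication and is independent of any character. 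Finally $|\BT_\red^F| = |T_0^F|$ depends only on the torus $T$. Hence $\CQ_{\phi_+}$ depends only on $\phi_+$.

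In short, the proposition is essentially a bookkeeping consequence of \Cref{Jordan}: for unipotent $g$ the external sum has a single term and the resulting trace is taken on the $\phi_+$-isotypic subspace for the positive-depth torus, with no residual dependence on $\phi_{-1}$. The only structural input required is the connectedness of $\BK$, and I do not anticipate any genuine obstacle.
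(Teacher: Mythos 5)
Your proof is correct and follows essentially the same route as the paper: for $\CQ_{\phi_+}$ it specializes \Cref{Jordan} at a unipotent element (where $s=1$, the sum collapses by connectedness of $\BK$, and the trace lands in the $\phi_+$-isotypic space), and for $Q_{\phi_+}$ it reduces to the independence of the classical Green function $Q_{L_0,T_0}$ from $\phi_{-1}$. The extra bookkeeping you supply (that $\BK$, $Z$, $\kappa$ and $\Psi_+$ are built from positive-depth data) is a harmless elaboration of what the paper leaves implicit.
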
 
\begin{proof} Let $u \in \BK^F_\unip$. By \Cref{Jordan} we have \[\CQ_{\phi_+}(u) = \CR_{\BT}^{\BK}(\phi)(u) = \frac{1}{|\BT_\red^F|} \tr(u; H_c^*(Z, \ov\BQ_\ell)[\phi_+]),\] which only depends on $\phi_+$ as desired. The other statement follows from that the classical Green function $R_{T_0}^{L_0}(\phi_{-1})|_{(L_0)^F_\unip}$ is independent of $\phi_{-1}$ by \cite[Theorem 4.2]{DeligneL_76}. \end{proof} 



\begin{theorem} \label{CK-trace}
      Let notation and assumption be as in \Cref{Jordan}. Then \[\hat \CR_{\BT}^{\BK}(\phi)(g) = \sum_{h \in \BK^F / (\BK^{s, \circ})^F,\ {}^h s \in \bar T^F} \phi({}^h s) \CQ_{{}^h s, \phi_+}({}^h u).\]
\end{theorem}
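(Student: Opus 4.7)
The plan is to recast \Cref{Jordan} so that the factor $|\BT_\red^F|^{-1}$ on its right-hand side is absorbed into $\CQ_{{}^h s, \phi_+}$. Applying \Cref{Jordan} yields
\[\hat\CR_{\BT}^{\BK}(\phi)(g) = \frac{1}{|\BT_\red^F|}\sum_{h}\phi({}^h s)\,\tr\bigl({}^h u;\,H_c^*(Z \cap \BK^{{}^h s, \circ},\ov\BQ_\ell)[\phi_+]\bigr),\]
while by definition $\CQ_{t, \phi_+}(v) = \tr\bigl(v;\,H_c^*(Z \cap \BK^{t, \circ},\ov\BQ_\ell)[\phi]\bigr)$. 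Comparing these two expressions termwise reduces the theorem to the identity
\[\tr\bigl(v;\,H_c^*(Z \cap \BK^{t, \circ},\ov\BQ_\ell)[\phi_+]\bigr) = |\BT_\red^F|\cdot\tr\bigl(v;\,H_c^*(Z \cap \BK^{t, \circ},\ov\BQ_\ell)[\phi]\bigr) \qquad (\star)\]
for each admissible $h$, where $t = {}^h s \in \bar T^F$ and $v = {}^h u \in (\BK^{t, \circ}_\unip)^F$.

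To prove $(\star)$, I would decompose the $\phi_+$-weight space under the right action of the depth-zero torus $\BT_\red^F$:
\[H_c^*(Z \cap \BK^{t, \circ},\ov\BQ_\ell)[\phi_+] = \bigoplus_{\chi \in \widehat{\BT_\red^F}} H_c^*(Z \cap \BK^{t, \circ},\ov\BQ_\ell)[\chi\phi_+].\]
Since left multiplication by the unipotent element $v$ commutes with right multiplication by $\BT^F$, it preserves each summand. Thus $(\star)$ reduces to showing that $\tr\bigl(v;\,H_c^*(Z \cap \BK^{t, \circ},\ov\BQ_\ell)[\chi\phi_+]\bigr)$ is independent of $\chi \in \widehat{\BT_\red^F}$; summing over $\widehat{\BT_\red^F}$ (which has order $|\BT_\red^F|$) then gives $(\star)$.

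The hard part will be this independence claim, which is the deep-level analog --- for the subvariety $Z \cap \BK^{t, \circ}$ --- of the classical Deligne--Lusztig fact (see \cite[Theorem 4.2]{DeligneL_76}) that Green functions do not depend on the torus character. For $t = 1$ the statement is exactly \Cref{independent}, whose proof invokes \Cref{Jordan} at the unipotent element $g = v$ (with trivial semisimple part $s' = 1$). My plan for general $t$ is to imitate this argument in a ``relative'' form: apply a Jordan-type character formula at $g' = v$ to the connected centralizer $\BK^{t, \circ}$ acting on $Z \cap \BK^{t, \circ}$. The subtlety is that $\BK^{t, \circ}$ is not itself the Moy--Prasad quotient of a parahoric subgroup of some $k$-group, so the character formula of \Cref{Jordan} and the supporting machinery of \Cref{sec:Heisenberg}--\Cref{sec:concentration} need to be re-established in a form valid for this more general subgroup, which is where the main technical work will lie.
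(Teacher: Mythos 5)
Your reduction is exactly right, and it is in fact all the paper does: the paper's proof applies \Cref{Jordan} and then passes from $\frac{1}{|\BT_\red^F|}\tr\bigl({}^h u;H_c^*(Z\cap\BK^{{}^h s,\circ},\ov\BQ_\ell)[\phi_+]\bigr)$ to $\tr\bigl({}^h u;H_c^*(Z\cap\BK^{{}^h s,\circ},\ov\BQ_\ell)[\phi]\bigr)$ in a single unjustified step, which is precisely your identity $(\star)$. Your decomposition of $H_c^*[\phi_+]$ into the $\chi\phi_+$-weight spaces and the reduction to the $\chi$-independence of $\tr(v;H_c^*[\chi\phi_+])$ is the correct way to organize this.

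Where your proposal goes astray is in the assessment of that last claim: it is not deep, and it does not require re-establishing \Cref{Jordan} or the machinery of \S\ref{sec:Heisenberg}--\S\ref{sec:concentration} for $\BK^{t,\circ}$. Write
\[\tr\bigl(v;H_c^*(Z\cap\BK^{t,\circ},\ov\BQ_\ell)[\chi\phi_+]\bigr)=\frac{1}{|\BT^F|}\sum_{\gamma\in\BT_\red^F}\ \sum_{\tau\in(\BT^{0+})^F}\chi(\gamma)\i\phi_+(\tau)\i\,\tr\bigl((v,\gamma\tau);H_c^*(Z\cap\BK^{t,\circ},\ov\BQ_\ell)\bigr),\]
where $(v,\gamma\tau)$ denotes left multiplication by $v$ composed with right multiplication by $\gamma\tau$. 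The automorphism $(v,\tau)$ has order a power of $p$, it commutes with right multiplication by $\gamma$, which has order prime to $p$, and right multiplication by $\gamma\neq 1$ acts freely on $Z\cap\BK^{t,\circ}$. By the Deligne--Lusztig fixed point formula (\cite[Theorem 3.2]{DeligneL_76}, the same tool already invoked in the proof of \Cref{Jordan}), every term with $\gamma\neq 1$ vanishes, so only $\gamma=1$ survives and the dependence on $\chi$ disappears; summing over $\chi$ then yields $(\star)$. As written, your proposal leaves the key step as a plan rather than a proof, and the plan you sketch (rebuilding the character formula for the group $\BK^{t,\circ}$) is far heavier than what is needed; with the observation above the argument closes in a few lines.
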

\begin{proof}
    Applying \Cref{Jordan} we have \begin{align*}
        \hat\CR_{\BT}^{\BK}(\phi)(g) &= \sum_{h \in \BK^F / (\BK^{s, \circ})^F,\ {}^h s \in \BT} \phi({}^h s) \frac{1}{|\BT_\red^F|} \tr({}^h u; H_c^*(Z \cap \BK^{{}^h s, \circ}, \ov\BQ_\ell)[\phi_+]) \\ &= \sum_{h \in \BK^F / (\BK^{s, \circ})^F,\ {}^h s \in \BT} \phi({}^h s) \tr({}^h u; H_c^*(Z \cap \BK^{{}^h s, \circ}, \ov\BQ_\ell)[\phi]) \\ &= \sum_{h \in \BK^F / (\BK^{s, \circ})^F,\ {}^h s \in \BT} \phi({}^h s) \CQ_{{}^h s, \phi_+}({}^h u).
    \end{align*} The proof is finished.   
\end{proof}

We say $\phi$ is \emph{regular} for $L_0$ if the stabilizer of $\phi|_{\BT^F}$ in $W_{\BL}(\BT)^F \cong W_{\BL}(\BT)^F$ is trivial. 
\begin{proposition} \label{tensor}
    If $\phi$ is regular for $L_0$, then there exists an irreducible $L_0^F$-module $\rho$ such that \[\pm \CR_{\BT}^{\BK}(\phi) \cong \k \otimes \rho.\] In particular, $\pm \CR_\BT^\BK(\phi)$ is an irreducible $\BK^F$-module.
\end{proposition}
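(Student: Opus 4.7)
The plan is to combine the Mackey formula (\Cref{product}) with a finite Stone--von Neumann argument for the Heisenberg-type group $\BH^F$. Since $\phi$ is regular for $L_0$, we have $\#\{w \in W_{\BL}(\BT)^F : {}^w\phi = \phi\} = 1$, so \Cref{product} gives $\langle \CR_{\BT}^{\BK}(\phi), \CR_{\BT}^{\BK}(\phi)\rangle_{\BK^F} = 1$. Thus there is a sign $\epsilon \in \{\pm 1\}$ such that $\pi := \epsilon \cdot \CR_{\BT}^{\BK}(\phi)$ is a genuine irreducible $\BK^F$-module, which establishes the last sentence of the proposition.

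To produce the tensor decomposition, I would first exploit the structure recalled in \S\ref{sec:summary_setup}: the normal subgroup $\BH \trianglelefteq \BK$ sits in a central extension $0 \to \BK^+ \to \BH \to V \to 0$ whose commutator induces a $\BK^+$-valued symplectic pairing on $V$, and $\BK/\BH \cong L_0$. Because $\BK^+ \subseteq \BT$ and $\pi$ is built from the $\phi$-weight space for $\BT^F$, the center $\BK^+$ acts on $\pi$ by the central character $\phi_+ = \phi|_{(\BT^{0+})^F}$. Genericity of the datum $(G^i,\phi_i)$ ensures that $\phi_+$ composed with the commutator pairing is nondegenerate on $V^F$, so that $\BH^F$ is a finite Heisenberg group in the standard sense. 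By the classical Stone--von Neumann theorem (which is precisely what underlies the construction of the Weil--Heisenberg representation in \cite[\S4,\S11]{Yu_01}, \cite[\S2.5]{Fin_21b}), the unique irreducible $\BH^F$-representation with central character $\phi_+$ is $\k|_{\BH^F}$. Consequently $\pi|_{\BH^F}$ is $\k|_{\BH^F}$-isotypic.

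Next, set $\rho := \Hom_{\BH^F}(\k,\pi)$. The $\BK^F$-actions on $\k$ and on $\pi$ induce a diagonal action of $\BK^F$ on $\rho$ by $(g \cdot \varphi)(v) = g\cdot \varphi(g^{-1}v)$; this action is trivial on $\BH^F$ (because $\varphi$ is $\BH^F$-linear) and therefore descends to an action of $L_0^F = \BK^F/\BH^F$. The canonical evaluation morphism
\[
\k \otimes \rho \;\longrightarrow\; \pi, \qquad v \otimes \varphi \;\mapsto\; \varphi(v),
\]
is $\BK^F$-equivariant, nonzero, and--after restricting to $\BH^F$--an isomorphism onto the $\k|_{\BH^F}$-isotypic component of $\pi|_{\BH^F}$, which is all of $\pi$. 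Hence $\pi \cong \k \otimes \rho$ as $\BK^F$-modules. Irreducibility of $\rho$ as an $L_0^F$-module follows at once: any proper $L_0^F$-submodule $\rho' \subsetneq \rho$ would give a proper $\BK^F$-submodule $\k \otimes \rho' \subsetneq \pi$, contradicting the irreducibility of $\pi$ proved in the first step.

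The only nontrivial step is the verification that $\phi_+$ composed with the commutator on $V$ yields a nondegenerate symplectic form over $\BF_q$; this is where the genericity of the $\phi_i$ and the Howe factorization from \S\ref{sec:Howe} enter decisively, and it is encoded in the existence of the Weil--Heisenberg representation $\k$. Everything else is formal representation theory of the extension $1 \to \BH^F \to \BK^F \to L_0^F \to 1$ once Stone--von Neumann is in hand.
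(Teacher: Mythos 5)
Your proposal is correct and follows essentially the same route as the paper: irreducibility of $\pm\CR_{\BT}^{\BK}(\phi)$ from the Mackey formula \Cref{product} together with regularity, then the observation that the central subgroup $(\BK^+)^F$ acts by $\phi_+$ so that the restriction to $\BH^F$ is $\kappa|_{\BH^F}$-isotypic by Stone--von Neumann, and finally a formal Clifford-theory step to extract the tensor factor. The only (cosmetic) difference is in that last step: you realize $\rho$ explicitly as the multiplicity space $\Hom_{\BH^F}(\kappa,\pi)$ with the evaluation isomorphism, whereas the paper deduces the same conclusion from Frobenius reciprocity and the projection formula $\ind_{\BH^F}^{\BK^F}\kappa|_{\BH^F}\cong\kappa\otimes\ind_{\BH^F}^{\BK^F}1$.
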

\begin{proof}
    By \Cref{product}, $\pm \CR_{\BT}^{\BK}(\phi)$ is an irreducible $\BK^F$-module. By construction, $(\BK^+)^F \cong (\BT^{0+})^F$ lies in the center of $\BK$ and hence acts on $\pm \CR_{\BT}^{\BK}(\phi)$ via the character $\phi$. Hence the restriction $\pm\CR_{\BT}^{\BK}(\phi)|_{\BH^F}$ is a sum of the Heisenberg representations isomorphic to $\k|_{\BH^F}$. In particular, \[\hom_{\BK^F}(\ind_{\BH^F}^{\BK^F} \k|_{\BH^F}, \pm\CR_{\BT}^{\BK}(\phi)) = \hom_{\BH^F}(\k|_{\BH^F}, \pm\CR_{\BT}^{\BK}(\phi)|_{\BH^F}) \neq 0.\] By the projection formula, $\ind_{\BH^F}^{\BK^F} \k|_{\BH^F} \cong \k \otimes \ind_{\BH^F}^{\BK^F} 1$ and hence is a direct sum of $\BK^F$-modules of the form $\k \otimes \rho$, where $\rho$ ranges over irreducible summands (with multiplicities) of $\ind_{\BH^F}^{\BK^F} 1$. The statement then follows form that $\pm\CR_{\BT}^{\BK}(\phi)$ is an irreducible $\BK^F$-module.
\end{proof}

\begin{proposition} \label{regular-case}
    Assume $q$ is sufficiently large. Then $\CQ_{t, \phi_+} = (-1)^{d(\phi, t)} Q_{t, \phi_+}$ for $t \in \bar T^F$.
\end{proposition}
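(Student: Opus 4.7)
The plan is to reduce the identity of Green functions to the identification $\CR_{\BT}^{\BK}(\phi) \cong (-1)^{d(\phi)} \k \otimes R_{T_0}^{L_0}(\phi_{-1}^\dag)$ as $\BK^F$-modules in the case when $\phi$ is regular for $L_0$, and then obtain the general case by applying the same argument to the centralizer $\BK^{t,\circ}$.

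First I would exploit Proposition~\ref{independent} (and its evident analog for $\BK^{t,\circ}$) to observe that both $\CQ_{t,\phi_+}$ and $Q_{t,\phi_+}$ depend only on $\phi_+$, not on $\phi_{-1}$. Since $q$ is sufficiently large, there exist depth-zero characters $\phi_{-1}$ of $T^F$ making $\phi$ regular for $L_0$ (and hence for all $L_0^{t,\circ}$ with $t \in \bar T^F$), so I may assume $\phi$ has this regularity. In this regular setting, Proposition~\ref{tensor} writes $\CR_{\BT}^{\BK}(\phi) = \epsilon\,(\k \otimes \rho)$ for some sign $\epsilon \in \{\pm 1\}$ and an irreducible $L_0^F$-module $\rho$, and the task becomes identifying $\rho$.

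Next, I would compute the character of $\CR_{\BT}^{\BK}(\phi)$ at a regular element $s \in \BT_\red^F$ for $L_0$ in two ways: Corollary~\ref{CR-vreg} gives $\CR_{\BT}^{\BK}(\phi)(s) = (-1)^{d(\phi,s)} |(V^s)^F|^{1/2}\sum_{w} \phi({}^w s)$, while Proposition~\ref{trace-kappa} evaluates $\k(s) = (-1)^{d(\phi) - d(\phi,s)} |(V^s)^F|\, \varepsilon_{\phi_+}(s) \phi_{\ge 0}(s)$. Combined with $\CR_{\BT}^{\BK}(\phi) = \epsilon\,\k \otimes \rho$ and the $W_{\BL}(\BT)^F$-invariance of $\phi_{\ge 0}|_T$ (which follows because $\phi_{\ge 0}$ factors through $T/T^0_\der$), this yields an explicit formula for $\rho(s)$ that matches $\epsilon(-1)^{d(\phi)}$ times the classical Deligne--Lusztig character formula for $R_{T_0}^{L_0}(\phi_{-1}^\dag)$ at a regular element. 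Since $q$ is large, the set of regular $s \in \BT_\red^F$ is abundant enough for \cite[Theorem~1.2]{CO_25} to force $\rho \cong \epsilon(-1)^{d(\phi)} R_{T_0}^{L_0}(\phi_{-1}^\dag)$. Therefore $\CR_{\BT}^{\BK}(\phi) = (-1)^{d(\phi)} R_{\BT}^{\BK}(\phi^\dag)$. Restricting to $u \in \BK^F_\unip$ and using that the classical Deligne--Lusztig character restricts to the Green function $Q_{L_0,T_0}$ on unipotent elements gives $\CQ_{\phi_+}(u) = (-1)^{d(\phi)} \k(u) Q_{L_0,T_0}(u) = (-1)^{d(\phi)} Q_{\phi_+}(u)$, which is the case $t=1$ since $d(\phi,1) = d(\phi)$.

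For general $t \in \bar T^F$, I would run the same argument inside the centralizer $\BK^{t,\circ}$. Since $t$ centralizes $\BT$, the group $L_0^{t,\circ}$ is a Levi subgroup of $L_0$ containing $T_0$, and the Howe factorization $(G^i,\phi_i)$ of $\phi$ restricts to a Howe factorization with the same positive-depth part $\phi_+$ on the analogous Yu datum inside $\BK^{t,\circ}$. The regularity of $\phi$ for $L_0$ forces regularity for $L_0^{t,\circ}$, so the analogs of Propositions~\ref{independent}, \ref{tensor}, Corollary~\ref{CR-vreg} and Proposition~\ref{trace-kappa} all apply to $\BK^{t,\circ}$; the concentration theorem (Proposition~\ref{concentration}) furnishes exactly the sign $(-1)^{d(\phi,t)}$ through the parity $n_{\phi,t} \equiv d(\phi,t) \pmod 2$. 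Running steps 2--5 inside $\BK^{t,\circ}$ yields $\CQ_{t,\phi_+} = (-1)^{d(\phi,t)} Q_{t,\phi_+}$.

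The hardest step will be the third, the character comparison at regular $s$: one must carefully track how the Heisenberg contribution $\k(s)$, the quadratic twist $\varepsilon_{\phi_+}$ hidden inside $\phi^\dag$, the factor $\phi_{\ge 0}(s)$, and the classical Deligne--Lusztig character formula combine to produce exactly $\epsilon(-1)^{d(\phi)} R_{T_0}^{L_0}(\phi_{-1}^\dag)(s)$, so that \cite[Theorem~1.2]{CO_25} can be applied. The transfer of the argument from $\BK$ to $\BK^{t,\circ}$ is conceptually parallel but requires the bookkeeping that the restricted Howe datum on $L_0^{t,\circ}$ is again generic with the same depths, which is where one uses that $t$ is a bounded semisimple element commuting with~$\BT$.
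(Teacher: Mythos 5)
Your proposal is correct and follows essentially the same route as the paper's proof: reduce to $t=1$, use \Cref{independent} and the largeness of $q$ to assume $\phi$ is regular, invoke \Cref{tensor} to write $\pm\CR_{\BT}^{\BK}(\phi)\cong\k\otimes\rho$, identify $\rho$ with $\pm R_{T_0}^{L_0}(\phi_{-1}^\dag)$ by comparing characters at regular semisimple elements of $\BT_\red^F$ via \Cref{CR-vreg} and \Cref{trace-kappa} together with the irreducibility criterion of Chan--Oi, and then restrict to unipotent elements. The only cosmetic difference is that you spell out the reduction from general $t$ to $t=1$ (running the argument inside $\BK^{t,\circ}$), which the paper dispatches with a ``without loss of generality.''
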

\begin{proof}
    Without loss of generality we may assume $t = 1$. As $q$ is sufficiently large, there exists a regular character $\psi: T^F \to \ov\BQ_\ell^\times$ such that $\psi_+ = \phi_+$. In view of \Cref{independent}, by replacing $\phi$ with $\psi$ we may assume further that $\phi$ is regular.
    
    By \Cref{tensor}, there exists an irreducible $L_0^F$-module $\rho$ such that $\pm \CR_{\BT}^{\BK}(\phi) \cong \k \otimes \rho$. Let $s \in \BT^F$ which is regular for $L_0$. By \Cref{CR-vreg} and \Cref{K-trace} we have \begin{align*} \CR_{\BT}^{\BK}(\phi)(s) &= (-1)^{d(\phi, s)} |(V^s)^F|^{\frac{1}{2}} \sum_{w \in W_{\BL}(\BT)^F} \phi({}^w s); \\ (\k \otimes \rho)(s) &= (-1)^{d(\phi) - d(\phi, s)} |(V^s)^F|^{\frac{1}{2}} \varepsilon_{\phi_+}(s) \phi_{\ge 0}(s) \rho(s). \end{align*} Since $\phi = \phi_{-1} \phi_{\ge 0}$ and $\pm\CR_{\BT}^{\BK}(\phi) \cong \k \otimes \rho$ we deduce that \[\pm \rho(s) = \sum_{w \in W_{\BL}(\BT)^F} \phi_{-1}^\dag({}^w s) = \sum_{w \in W_{L_0}(T_0)^F} \phi_{-1}^\dag({}^w s) = R_{T_0}^{L_0}(\phi_{-1}^\dag)(s).\] On the other hand, as $\phi$ is regular, so are $\phi_{-1}$ and $\phi_{-1}^\dag$. Hence $\pm R_{T_0}^{L_0}(\phi_{-1}^\dag)$ is irreducible. As $q$ is sufficiently large, it follows from \cite[Theorem 3.2]{CO_25} that $\rho = \pm R_{T_0}^{L_0}(\phi_{-1}^\dag)$ and hence \[\pm \CR_{\BT}^{\BK}(\phi) = \k \otimes R_{T_0}^{L_0}(\phi_{-1}^\dag) = R_{\BT}^{\BK}(\phi^\dag).\] By comparing traces on both sides, we deduce that $\CR_{\BT}^{\BK}(\phi) = (-1)^{d(\phi)} R_{\BT}^{\BK}(\phi^\dag)$ as desired.
\end{proof}

\begin{theorem}\label{thm:final_comparison}
    Assume $q$ is sufficiently large. Then we have \[\CR_{\BT}^{\BK}(\phi) = (-1)^{d(\phi)} R_{\BT}^{\BK}(\phi^\dag).\] If, moreover, $p \nmid |W_{\BL}(\BT)^F|$, then $\hat \CR_{\BT}^{\BK}(\phi) = (-1)^{d(\phi)} \hat R_{\BT}^{\BK}(\phi^\dag)$.
\end{theorem}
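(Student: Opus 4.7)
The plan is to prove the two equalities by comparing characters, using the three ingredients developed in the previous sections: the character formula for $\hat\CR_{\BT}^{\BK}(\phi)$ (\Cref{CK-trace}), the character formula for $\hat R_{\BT}^{\BK}(\phi^\dag)$ (\Cref{K-trace}), and the Green function comparison (\Cref{regular-case}). The latter, established under the largeness assumption on $q$, provides the crucial identity
\[
\CQ_{t,\phi_+} = (-1)^{d(\phi,t)} Q_{t,\phi_+} \qquad \text{for all } t \in \bar T^F,
\]
and the whole argument is essentially a bookkeeping of signs once this is in hand.

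\textbf{Step 1 (first equality on $\BK^F$).} Let $g \in \BK^F$ with Jordan decomposition $g = su = us$. Applying \Cref{J-dec} with $z=1$, \Cref{CK-trace} yields
\[
\CR_{\BT}^{\BK}(\phi)(g) = \sum_{h \in \BK^F/(\BK^{s,\circ})^F,\ {}^hs \in \bar T^F} \phi({}^h s)\,\CQ_{{}^h s,\phi_+}({}^h u),
\]
while \Cref{K-trace} yields
\[
R_{\BT}^{\BK}(\phi^\dag)(g) = (-1)^{d(\phi)} \sum_{h \in \BK^F/(\BK^{s,\circ})^F,\ {}^hs \in \bar T^F} (-1)^{d(\phi,{}^h s)} Q_{{}^h s,\phi_+}({}^h u)\,\phi({}^h s).
\]
Substituting $\CQ_{{}^h s,\phi_+} = (-1)^{d(\phi,{}^h s)} Q_{{}^h s,\phi_+}$ from \Cref{regular-case} into the first formula, the two right-hand sides differ precisely by the global sign $(-1)^{d(\phi)}$. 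Hence the characters agree, which proves $\CR_{\BT}^{\BK}(\phi) = (-1)^{d(\phi)} R_{\BT}^{\BK}(\phi^\dag)$. Note that the extra hypothesis $p \nmid |W_{\BL}(\BT)^F|$ is not needed here, since in \Cref{J-dec} for $g \in \BK^F$ we may take $z=1$ and the formulas are unconditional.

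\textbf{Step 2 (second equality on $\BK^F \bar T^F$).} Now assume additionally that $p \nmid |W_{\BL}(\BT)^F|$. Let $g \in \BK^F \bar T^F$ and write $g = su = us$ as in \Cref{J-dec}. Under this hypothesis, both \Cref{CK-trace} and \Cref{K-trace} apply to $g$, giving the same two formulas as in Step~1 for $\hat\CR_{\BT}^{\BK}(\phi)(g)$ and $\hat R_{\BT}^{\BK}(\phi^\dag)(g)$. Applying the Green function comparison termwise exactly as before shows that
\[
\hat\CR_{\BT}^{\BK}(\phi)(g) = (-1)^{d(\phi)} \hat R_{\BT}^{\BK}(\phi^\dag)(g),
\]
and since this holds for every $g \in \BK^F \bar T^F$, the two virtual $\BK^F \bar T^F$-modules are equal.

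\textbf{Main obstacle.} The substantive content of the theorem is entirely encoded in \Cref{regular-case}, whose proof relied on \Cref{tensor}, the trace computation on regular semisimple elements (\Cref{CR-vreg} and \Cref{K-trace}), and the character rigidity input \cite[Theorem 3.2]{CO_25}. Once that Green function comparison is available, the present theorem is a direct application of the Jordan-decomposition character formulas; the only subtlety is checking that the sign $(-1)^{d(\phi,{}^h s)}$ from the Green function comparison matches term by term with the sign appearing in \Cref{K-trace}, which is immediate from inspection. The hypothesis $p \nmid |W_{\BL}(\BT)^F|$ enters only to license the use of \Cref{J-dec} and the two character formulas outside of $\BK^F$.
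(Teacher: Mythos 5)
Your proposal is correct and follows essentially the same route as the paper: combine \Cref{CK-trace}, \Cref{K-trace} and the Green function identity \Cref{regular-case} to match the two character formulas term by term, with the sign $(-1)^{d(\phi,{}^h s)}$ cancelling against the one in \Cref{K-trace}. The paper proves only the first equality and notes the second follows the same way; your explicit check that $p \nmid |W_{\BL}(\BT)^F|$ is needed only off $\BK^F$ is consistent with the hypotheses of \Cref{Jordan}.
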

\begin{proof}
    We only show the first statement. The second follows in the same way. Let notation be as in \Cref{Jordan}. Combining \Cref{K-trace}, \Cref{CK-trace} and \Cref{regular-case} we have \begin{align*}
        R_{\BT}^{\BK}(\phi^\dag)(g) &= (-1)^{d(\phi)}  \sum_{h \in \BK^F / (\BK^{s, \circ})^F, {}^h s \in \BT^F} (-1)^{d(\phi, {}^h s)} Q_{{}^h s, \phi_+}({}^h u) \phi({}^h s) \\ &= (-1)^{d(\phi)}  \sum_{h \in \BK^F / (\BK^{s, \circ})^F, {}^h s \in \BT} \CQ_{{}^h s, \phi_+}({}^h u) \phi({}^h s) \\ &= (-1)^{d(\phi)} \CR_{\BT}^{\BK}(\phi)(g).
    \end{align*} The proof is finished.    
\end{proof}

\subsection{Regular supercuspidal representations}
Following \cite[\S 3.7]{Kaletha_19}, we assume further that $(T, \phi)$ is a tame elliptic regular pair, and let $\pi_{(T, \phi)}$ denote Kaletha's regular supercuspidal representation associated to $(T, \phi)$. 
\begin{corollary}
    Assume $p \nmid |W_{\BL}(\BT)|$ and $q$ is sufficiently large. Then $(-1)^{d(\phi) + r_L - r_T} \hat \CR_{\BT}^{\BK}(\phi)$ is an irreducible $\CK_\phi^F T^F$-module and \[\pi_{(T, \phi)} \cong \text{c-}\ind_{\CK_\L^F T^F}^{G^F} (-1)^{d(\phi) + r_L - r_T} \hat \CR_{\BT}^{\BK}(\phi^\dag).\] Here $r_L$ and $r_T$ denote the split ranks of $L$ and $T$ respectively.
\end{corollary}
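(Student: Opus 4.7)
The plan is to combine \Cref{thm:final_comparison} with Kaletha's construction. Since $\varepsilon_{\phi_+}$ is a depth-zero character of $T^F$ and $(\phi^\dag)^\dag = \phi$, the pair $(T,\phi^\dag)$ is again tame elliptic regular, the positive-depth parts of $\phi$ and $\phi^\dag$ coincide, and hence $d(\phi^\dag) = d(\phi)$. Applying \Cref{thm:final_comparison} to both $\phi$ and $\phi^\dag$ (both hypotheses hold by assumption) and unpacking the definition $\hat R_{\BT}^{\BK}(\psi) = \k|_{\bar T^F \BK^F} \otimes \hat R_{T_0}^{L_0}(\psi_{-1})$ from \S\ref{sec:R-mod}, I obtain the two identities of virtual $\CK_\L^F T^F$-modules
\begin{align*}
\hat \CR_{\BT}^{\BK}(\phi) &= (-1)^{d(\phi)} \, \k \otimes \hat R_{T_0}^{L_0}(\phi_{-1}^\dag), \\
\hat \CR_{\BT}^{\BK}(\phi^\dag) &= (-1)^{d(\phi)} \, \k \otimes \hat R_{T_0}^{L_0}(\phi_{-1}),
\end{align*}
where I used $(\phi^\dag)_{-1} = \phi_{-1}^\dag$.

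For the irreducibility claim, I would multiply the first identity by $(-1)^{d(\phi)+r_L-r_T}$, obtaining
\[
(-1)^{d(\phi)+r_L-r_T}\, \hat \CR_{\BT}^{\BK}(\phi) \;=\; \k \otimes (-1)^{r_L-r_T}\, \hat R_{T_0}^{L_0}(\phi_{-1}^\dag).
\]
Because $(T,\phi)$ is tame elliptic regular, the character $\phi_{-1}^\dag$ is in general position for $L_0$ and $T_0$ is elliptic in $L_0$; by classical Deligne--Lusztig theory \cite{DeligneL_76}, the virtual character $(-1)^{r_L-r_T} R_{T_0}^{L_0}(\phi_{-1}^\dag)$ is an honest irreducible $L_0^F$-representation, and its Kaletha extension to $\bar T^F L_{\bx,0}^F$ defined in \cite[\S 3.4.4]{Kaletha_19} remains irreducible. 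Tensoring with the irreducible Heisenberg factor $\k$ yields an irreducible $\CK_\L^F T^F$-module, as in Yu's construction. Alternatively, irreducibility follows directly from \Cref{product}, which forces the inner product to be~$1$.

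For the compact-induction claim, multiplying the second identity by $(-1)^{d(\phi)+r_L-r_T}$ gives
\[
(-1)^{d(\phi)+r_L-r_T}\, \hat \CR_{\BT}^{\BK}(\phi^\dag) \;=\; \k \otimes (-1)^{r_L-r_T}\, \hat R_{T_0}^{L_0}(\phi_{-1}).
\]
This is exactly the representation of $\CK_\L^F T^F$ whose compact induction to $G^F$ defines Kaletha's regular supercuspidal $\pi_{(T,\phi)}$ in \cite[\S 3.7]{Kaletha_19}: the factor $\k$ encodes the positive-depth Yu datum $\L = (G^i,\phi_i)_{i=0}^d$ as in \cite{Yu_01, Fin_21b}, while $(-1)^{r_L-r_T}\hat R_{T_0}^{L_0}(\phi_{-1})$ is the canonical extension of the depth-zero Deligne--Lusztig representation attached to $(T_0,\phi_{-1})$. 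The main obstacle is precisely this last identification, namely verifying that all sign and extension conventions match Kaletha's verbatim; the appearance of $\phi^\dag$ on the geometric side of the compact-induction formula (rather than $\phi$) precisely absorbs the FKS sign twist \cite{FKS} and restores the \emph{untwisted} depth-zero character $\phi_{-1}$ inside the representation being induced, so that the resulting $G^F$-representation is Kaletha's (FKS-corrected) $\pi_{(T,\phi)}$.
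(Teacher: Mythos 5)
Your argument is correct and follows essentially the same route as the paper: apply \Cref{thm:final_comparison} (to $\phi$ and to $\phi^\dag$, using $(\phi^\dag)^\dag=\phi$ and $d(\phi^\dag)=d(\phi)$) to convert $\hat\CR_{\BT}^{\BK}(\phi^\dag)$ into $(-1)^{d(\phi)}\hat R_{\BT}^{\BK}(\phi)=(-1)^{d(\phi)}\,\k\otimes\hat R_{T_0}^{L_0}(\phi_{-1})$, and then invoke the identification of this module with Kaletha's inducing representation. The only difference is that the step you flag as ``the main obstacle'' --- matching $\text{c-}\ind_{\CK_\L^F T^F}^{G^F}(-1)^{r_L-r_T}\hat R_{\BT}^{\BK}(\phi)$ with $\pi_{(T,\phi)}$ --- is exactly what the paper outsources to \cite[Lemma 3.10]{ChanO_21}, so your description of its content is the intended input and no further work is needed.
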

\begin{proof}
    Note that $ (-1)^{r_L - r_T} \hat R_{\BT}^{\BK}(\phi)$ is an irreducible $\CK_\L^F T^F$-module. By \cite[Lemma 3.10]{ChanO_21} we have \[\pi_{(T, \phi)} \cong \text{c-}\ind_{\CK_\L^F T^F}^{G^F} (-1)^{r_L - r_T} \hat R_{\BT}^{\BK}(\phi).\] Note that $(\phi^\dag)^\dag = \phi$ and $d(\phi)=d(\phi^\dag)$. The statement then follows from \Cref{thm:final_comparison} that $\hat \CR_{\BT}^{\BK}(\phi^\dag) = (-1)^{d(\phi)} \hat R_{\BT}^{\BK}(\phi)$.
\end{proof}

\section{Supercuspidal representations} \label{sec:supercuspidal} 
In this section, we show that all the irreducible supercuspidal representations of $G^F$ can be realized by the cohomology of our varieties under mild assumptions on $p$ and $q$. 

First we recall that a Yu's datum is a triple $\Sigma = (\L, \bx, \widetilde \rho)$ such that
\begin{itemize}
    \item $\L = (G^i, \phi_i, r_i)_{i=0}^d$ is a generic datum as in \S\ref{sec:Howe};

    \item $Z(G^0) / Z(G)$ is anisotropic;

    \item $\bx \in \CB(G^0, k)$ is a vertex;

    \item $\widetilde \rho$ is a cuspidal representation of $(G^0_{[\bx]} / G_{\bx, 0+})^F$.
\end{itemize}

We fix a Yu's datum $\Sigma = (\L, \bx, \widetilde \rho)$ as above and put $L = G^0$ for simplicity. Let \[\CK := \CK_\L \subseteq \widetilde \CK_\L =: \widetilde \CK\] be the Yu-type subgroups associated to $\L$, see \S\ref{sec:Howe}. Let $\k$ be the Weil--Heisenberg representation of $\widetilde \CK^F$ in the sense of \cite{Yu_01}.
\begin{theorem} [\cite{Yu_01, Kim, Fin_21a}]
    Assume $p$ does not divide the order of the absolute Weyl group of $G$. Then the map \[\Sigma \mapsto \pi_\Sigma:= \text{c-}\ind_{\widetilde K^F}^{G^F} \k \otimes \widetilde \rho\] induces a bijection between equivalence classes of Yu's data and isomorphism classes irreducible supercuspidal representations of $G^F$.
\end{theorem}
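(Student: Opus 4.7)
The plan is to combine three ingredients from Yu, Kim, and Fintzen, each addressing a different aspect of the claimed bijection. First, following Yu, one constructs $\k \otimes \widetilde\rho$ as a representation of $\widetilde\CK^F$ and verifies that $\pi_\Sigma$ is irreducible supercuspidal. The Weil--Heisenberg representation $\k$ is built recursively from the generic characters $\phi_i$ using the Heisenberg-type extensions coming from the quotients of Moy--Prasad subgroups of $G^{i+1}$ by $G^i$; after checking that $\k \otimes \widetilde\rho$ extends from $\CK^F$ to $\widetilde\CK^F$, one applies Mackey's criterion. This reduces irreducibility to the computation of the $G^F$-intertwining of $\k \otimes \widetilde\rho$, which Yu shows equals $\widetilde\CK^F$ itself, by exploiting $(G^i, G^{i+1})$-genericity of each $\phi_i$ together with the cuspidality of $\widetilde\rho$. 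Supercuspidality is then automatic because $\widetilde\CK^F$ is compact modulo $Z(G)^F$.

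Second, one must show that $\Sigma \mapsto \pi_\Sigma$ is well-defined and injective on equivalence classes. This is the Hakim--Murnaghan refinement analysis: two data $\Sigma, \Sigma'$ yield equivalent compact inductions precisely when they are related by a $G^F$-conjugation that simultaneously matches the twisted Levi towers, the generic characters (up to the permissible twists arising from changes of refinement), and the depth-zero cuspidal $\widetilde\rho$. The core calculation is again an intertwining comparison, now between representations induced from closely related but distinct data, and it rests crucially on the genericity of the $\phi_i$.

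The main obstacle is the exhaustion step: given an arbitrary irreducible supercuspidal $\pi$ of $G^F$, one must produce a Yu datum $\Sigma$ with $\pi \cong \pi_\Sigma$. This proceeds in two stages. First, under the hypothesis $p \nmid |W|$, one shows that $\pi$ is tame, meaning that it has nonzero vectors fixed by $G_{\bx, r+}$ for some point $\bx$ in the building and some $r \in \BR_{\geq 0}$; Fintzen achieves this via a fine analysis of the restriction of $\pi$ to parahoric subgroups, leveraging the fact that the prime-to-$|W|$ hypothesis forces the modular representation theory of the Moy--Prasad reductive quotients to behave as in the split case. Second, starting from a tame $\pi$, one runs an inductive Moy--Prasad extraction: at each depth one isolates a generic character of a twisted Levi subgroup, producing the tower $G^0 \subsetneq G^1 \subsetneq \cdots \subsetneq G^d = G$ together with the characters $\phi_i$, the vertex $\bx$, and the depth-zero cuspidal $\widetilde\rho$. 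The potential failure of genericity at each extraction step is precisely what is ruled out by the prime-to-$|W|$ assumption. This exhaustion stage is by far the deepest part of the argument; in contrast, Yu's construction and the Hakim--Murnaghan comparison, while technically demanding, are essentially internal to $\widetilde\CK^F$ and amount to explicit intertwining computations.
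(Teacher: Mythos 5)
The paper offers no proof of this theorem: it is quoted verbatim from the cited literature (Yu's construction for well-definedness, irreducibility and supercuspidality; Hakim--Murnaghan for the equivalence relation on data; Kim and Fintzen for exhaustion), and your outline correctly identifies exactly these three ingredients, so there is nothing in the paper to compare it against beyond the citation. Two small cautions: ``tame'' does not mean ``has $G_{\bx,r+}$-fixed vectors for some $r$'' (every smooth irreducible representation has finite depth), but rather that $\pi$ arises from Yu's construction attached to a tamely ramified datum; and Yu's original intertwining/irreducibility argument contained a gap (a misapplied result of G\'erardin) that was later repaired by Fintzen, which is part of why \cite{Fin_21a} appears in the attribution.
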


By assumption, the restriction $\widetilde \rho |_{Z(G)^F L_{\bx, 0}^F}$ has an irreducible cuspidal summand $\rho$. By \cite{DeligneL_76}, there exist an elliptic torus $\mathsf T \subseteq L_0$ and a character $\th: \mathsf T^F \to \ov\BQ_\ell^\times$ such that $\rho |_{L_{\bx, 0}^F}$ appears in the inflation of $R_{\mathsf T}^{L_0}(\th)$. By \cite[Lemma 3.4.4]{Kaletha_19}, there exists a maximally unramified elliptic maximal torus $T$ of $L$ such that $\bx \in \CA(T, \brk)$ and $\mathsf T \cong T_0$. By inflation we view $\th$ as character of $T_{\bx, 0}^F$. Let $\o$ be the central character of $\widetilde \rho$. By construction, $\o$ and $\th$ coincide over $Z(G)^F \cap T_{\bx, 0}^F$.

Since $Z(L)/Z(G)$ is anisotropic, $T$ is also elliptic in $G$. The following lemma shows that the quotient $T^F / (Z(G)^F T_{\bx, 0}^F)$ is a finite group. 
\begin{lemma}\label{lm:quot_torus_finite}
For an elliptic torus $T \subseteq G$, the quotient $T^F/(Z(G)^F T_{{\bf x},0}^F)$ is finite. 
\end{lemma}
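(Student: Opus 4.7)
The approach is to invoke the Kottwitz-type valuation homomorphism for $k$-tori and to use the ellipticity of $T$ to match the ``rank'' parts of $Z(G)^\circ(k)$ and $T^F$.

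First I would recall the following general fact for any $k$-torus $S$: there is a homomorphism $\omega_S \colon S(k) \to \Lambda_S$ to a finitely generated abelian group whose kernel is the maximal bounded subgroup $S(k)_b$, with image of $\BQ$-rank equal to the $k$-split rank of $S$. Moreover $S(k)_b$ contains the parahoric subgroup $S_{\bx,0}^F$ (which for a torus does not depend on $\bx$) as a subgroup of finite index; the quotient is a component group of the N\'eron model of $S$.

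Next I would apply this to $S = T$ and $S = Z(G)^\circ$. Functoriality of $\omega$, applied to the inclusion $Z(G)^\circ \hookrightarrow T$, yields a commutative square relating the two valuation maps. By the ellipticity of $T$ in $G$, the maximal $k$-split subtorus of $T$ lies in $Z(G)^\circ$, so the $k$-split ranks of $T$ and of $Z(G)^\circ$ agree. Consequently the image of $Z(G)^\circ(k)$ in $\Lambda_T$ is a sublattice of $\omega_T(T^F)$ of the same $\BQ$-rank, hence of finite index. Combining this with the finite-index inclusion $T_{\bx,0}^F \subseteq \ker\omega_T = T(k)_b$, we conclude that $T^F / (Z(G)^\circ(k) \cdot T_{\bx,0}^F)$ is finite, and the desired finiteness follows at once from $Z(G)^\circ(k) \subseteq Z(G)(k) = Z(G)^F$.

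I do not anticipate a serious obstacle. The only mild care required is handling the possible disconnectedness of $Z(G)$, which is harmless: passing from $Z(G)$ to $Z(G)^\circ$ changes the group by only a finite factor, and the various quotients among bounded subgroups, parahoric subgroups, and kernels of valuation maps all differ by finite factors that do not affect the conclusion.
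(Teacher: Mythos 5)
Your proof is correct, but it takes a genuinely different route from the paper. The paper argues via the exact sequence $T_{\der} \hookrightarrow T \twoheadrightarrow D = G/G_{\der}$: ellipticity makes $T_{\der}$ anisotropic, hence $T_{\der}(k)$ bounded, and the finiteness of $H^1(k,S)$ for tori over local fields controls the remaining cokernels, reducing everything to the finite-index statement for the image of $Z(G)(k)$ in $D(k)$. You instead work entirely inside $T$ with the valuation (Kottwitz) homomorphism: since $T(k)/T(k)_b$ is free of rank equal to the $k$-split rank of $T$, and ellipticity forces the maximal $k$-split subtorus of $T$ into $Z(G)^\circ$, the image of $Z(G)^\circ(k)$ already accounts for all the unbounded directions. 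Your approach is arguably more direct and avoids Galois cohomology, at the cost of invoking the structure theory $S(k)/S(k)_b \cong \BZ^{\mathrm{rk}_k S}$; the paper's approach needs only boundedness of anisotropic tori plus finiteness of $H^1$. One small point you should make explicit: to see that the image of $Z(G)^\circ(k)$ in $\Lambda_T$ has full rank, note that the kernel of $Z(G)^\circ(k) \to T(k)/T(k)_b$ is a bounded subgroup of $Z(G)^\circ(k)$, hence equals $Z(G)^\circ(k)_b$, so the image is free of rank equal to the split rank of $Z(G)^\circ$ (equivalently, that coinvariants of an injection of $\BQ[\Gamma]$-modules for $\Gamma$ finite remains injective). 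With that observation inserted, the argument is complete.
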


\begin{proof}
The Iwahori subgroup $T_{{\bf x},0}^F$ has finite index in the maximal bounded subgroup $T(k)_b$ of $T(k)$ by \cite[Lemma 2.5.14]{KalethaPrasad_book}. So it suffices to show that $T(k)/(Z(G)(k) T(k)_b)$ is finite. Let $T_{\der} = T \cap G_{\der}$. Then $T_{\der}$ is anisotropic and hence $T_{\rm der}(k)_b = T_{\rm der}(k)$ by \cite[Proposition 2.5.8]{KalethaPrasad_book}. Let $D = G/G_{\rm der}$. Taking Galois-fixed points of the short exact sequence $T_{\der} \hookrightarrow T \twoheadrightarrow D$ and using that $T_{\der}(k)_b \subseteq T(k)_b$ (by \cite[Proposition 2.5.9]{KalethaPrasad_book}), and that $H^1(k,T_{\der})$ is finite (by \cite[Chap.III, Theorem 4]{Serre_Galois_cohomology_engl}), we see that it suffices to show that the image of the composition $Z(G)(k) \to T(k) \to D(k)$ has finite index in $D(k)$. But this follows by taking Galois cohomology of the short exact sequence $Z(G) \cap G_\der \hookrightarrow Z(G) \twoheadrightarrow D$ from the finiteness of $H^1(k,Z(G) \cap G_\der)$.
\end{proof}
Let $\phi_{-1}$ be a character of $T^F$ which extends both $\o$ and $\th$. In particular, $\phi_{-1}$ is of depth $0$. Let \[\phi = \phi_{-1} \prod_{i=0}^d \phi_i |_{T^F}.\] Then $(\phi_{-1}, \L)$ is a Howe factorization of $\phi$. Let $Z$ be the deep level Deligne--Lusztig variety constructed in \S\ref{subsec:variety}. We extend the $\CK^F$-modules $H_c^i(Z, \ov\BQ_\ell)[\phi]$ to $Z(G)^F \CK^F$-modules on which $Z(G)^F$ acts via $\phi_{-1}$ or $\o$. 

\begin{theorem}
    Let notation be as above. Assume $q$ is sufficiently large. Then $\pi_\Sigma$ is a summand of $\text{c-}\ind_{Z(G)^F \CK^F}^{G^F} H_c^i(Z, \ov\BQ_\ell)[\phi^\dag]$ for some integer $i \in \BZ_{\ge 0}$.
\end{theorem}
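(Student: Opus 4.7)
The plan is to reduce the statement to \Cref{thm:final_comparison} applied to $\phi^\dag$, then use classical Deligne--Lusztig theory and the induction pattern of Yu's construction. Since $\varepsilon_{\phi_+}$ factors through $T^F/T_{\bx,0+}^F$, it does not alter the positive-depth part of the Howe factorization: $\phi$ and $\phi^\dag$ share the same generic datum $\L$ (only the depth-zero piece is replaced by $\phi_{-1}^\dag$), so $d(\phi^\dag) = d(\phi)$ and the same variety $Z$ is attached to both. Applying \Cref{thm:final_comparison} to $\phi^\dag$ and using $(\phi^\dag)^\dag = \phi$ then yields the virtual identity
\[ \CR_{\BT}^{\BK}(\phi^\dag) \;=\; (-1)^{d(\phi)}\,\k \otimes R_{T_0}^{L_0}(\phi_{-1}) \]
in the Grothendieck group of $\CK^F$-modules.

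The next step is to extract a convenient irreducible constituent. Let $\bar\rho$ be the irreducible cuspidal $L_0^F$-representation inflating to $\rho|_{L_{\bx,0}^F}$; this is well-defined because $\widetilde\rho$ is trivial on $G_{\bx,0+}^F$, and hence $\rho$ is trivial on $L_{\bx,0+}^F$. By the choice $\phi_{-1}|_{T_{\bx,0}^F} = \theta$, the representation $\bar\rho$ occurs in $R_{T_0}^{L_0}(\phi_{-1})$ up to a sign. Since the center $(\BK^+)^F$ acts on $\k$ by $\phi_+$ and trivially on $\bar\rho$, standard Clifford theory for the Heisenberg quotient shows that $\k \otimes \bar\rho$ is an irreducible $\CK^F$-representation. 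Consequently $\k \otimes \bar\rho$ is an irreducible constituent of $\sum_i (-1)^i H_c^i(Z, \ov\BQ_\ell)[\phi^\dag]$ with non-zero signed multiplicity, forcing it to appear as a direct summand of $H_c^i(Z, \ov\BQ_\ell)[\phi^\dag]$ for some $i \in \BZ_{\ge 0}$.

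The final step is to descend to $\pi_\Sigma$. Once I verify that the $Z(G)^F$-central character of $\k \otimes \rho$, namely $\omega \cdot \prod_{j=0}^d \phi_j|_{Z(G)^F}$ by Yu's construction of $\k$, agrees with $\phi^\dag|_{Z(G)^F}$ (the action used to extend $H_c^i(Z,\ov\BQ_\ell)[\phi^\dag]$ to a $Z(G)^F\CK^F$-module), the extension $\k \otimes \rho$ is a direct summand of the extended $H_c^i(Z, \ov\BQ_\ell)[\phi^\dag]$. Induction in stages along $Z(G)^F \CK^F \subseteq \widetilde\CK^F \subseteq G^F$ combined with the projection formula (valid since $\k$ is defined on all of $\widetilde\CK^F$) then gives
\[ \text{c-}\ind_{Z(G)^F \CK^F}^{G^F}(\k \otimes \rho) \;\cong\; \text{c-}\ind_{\widetilde\CK^F}^{G^F}\bigl(\k \otimes \ind_{Z(G)^F \CK^F}^{\widetilde\CK^F}\rho\bigr). \]
Frobenius reciprocity, together with the inclusion $\rho \subseteq \widetilde\rho|_{Z(G)^F\CK^F}$ coming from the construction of $\rho$ and semisimplicity of representations of finite groups, exhibits $\widetilde\rho$ as a direct summand of $\ind_{Z(G)^F \CK^F}^{\widetilde\CK^F}\rho$. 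Hence $\pi_\Sigma = \text{c-}\ind_{\widetilde\CK^F}^{G^F}(\k \otimes \widetilde\rho)$ is a summand of $\text{c-}\ind_{Z(G)^F \CK^F}^{G^F}H_c^i(Z, \ov\BQ_\ell)[\phi^\dag]$, which is the required conclusion.

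The principal technical obstacle is the central-character bookkeeping at $Z(G)^F$: matching $\phi^\dag|_{Z(G)^F}$ with the action of $Z(G)^F$ on $\k \otimes \rho$ requires identifying $\k|_{Z(G)^F}$ with $\prod_j \phi_j|_{Z(G)^F}$ in Yu's framework and verifying triviality of $\varepsilon_\phi|_{Z(G)^F}$ in the appropriate sense. Beyond this the argument is essentially formal, and parallels (and slightly refines) the regular case already handled in the corollary of \Cref{thm:final_comparison}.
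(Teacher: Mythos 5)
Your proposal is correct and follows essentially the same route as the paper: both hinge on applying \Cref{thm:final_comparison} to $\phi^\dag$ (using $(\phi^\dag)^\dag=\phi$) to identify $H_c^*(Z,\ov\BQ_\ell)[\phi^\dag]$ with $\pm\,\k\otimes R_{T_0}^{L_0}(\phi_{-1})$, extract the constituent corresponding to $\rho$, and then combine the projection formula, Frobenius reciprocity for $\rho\subseteq\widetilde\rho|_{Z(G)^F\CK^F}$, and induction in stages through $\widetilde\CK^F$. The only cosmetic difference is the order of operations (you pass to an honest irreducible summand $\k\otimes\bar\rho$ of some $H_c^i$ before inducing, whereas the paper first induces the virtual module, made genuine via the $|M|$ device, up to $\widetilde\CK^F$), and the central-character bookkeeping you flag is exactly what the paper's choice of extension of $H_c^i(Z,\ov\BQ_\ell)[\phi^\dag]$ to $Z(G)^F\CK^F$ is designed to arrange.
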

\begin{proof}
    Let $H$ be a group and let $M$ be a virtual $H$-module over $\ov\BQ_\ell$. Suppose that $M = \sum_\chi c_\chi \chi$, where $c_\chi \in \BZ$ and $\chi$ runs over isomorphism classes of irreducible $H$-modules. We put $|M| = \sum_\chi |c_\chi| \chi$, which is a genuine $H$-module.

    We view the virtual $L_{\bx, 0}^F$-module $R_{\mathsf T}^{L_0}(\th)$ as a natural $Z(G)^F \CK^F$-module on which $\CH_\L^F $ acts trivially and $Z(G)^F$ acts via $\phi_{-1}$.  By construction we have \[\hom_{\widetilde \CK^F}(\ind_{Z(G)^F \CK^F}^{\widetilde\CK^F} \rho, \widetilde \rho) \cong \hom_{Z(G)^F \CK^F}(\rho, \widetilde \rho) \neq 0.\] As $\rho$ appears in $R_{\mathsf T}^{L_0}(\th)$, it follows that $\k\otimes \widetilde\rho$ appears in \[\k \otimes \ind_{Z(G)^F \CK^F}^{\widetilde\CK^F} |R_{\mathsf T}^{L_0}(\th)| \cong \ind_{Z(G)^F \CK^F}^{\widetilde\CK^F} \k \otimes |R_{\mathsf T}^{L_0}(\th)| \cong \ind_{Z(L)^F \CK^F}^{\widetilde\CK^F} |\k \otimes R_{\mathsf T}^{L_0}(\th)|.\] On the other hand, by \Cref{thm:final_comparison} we have $\pm \k \otimes R_{\mathsf T}^{L_0}(\th) = H_c^*(Z, \ov\BQ_\ell)[\phi^\dag]$. In particular, there exists $i \in \BZ_{\ge 0}$ such that $\k \otimes \widetilde\rho$ is a direct summand of $\ind_{Z(G)^F \CK^F}^{\widetilde\CK^F} H_c^i(Z, \ov\BQ_\ell)[\phi^\dag]$. Therefore, $\pi_\Sigma = \text{c-}\ind_{\widetilde\CK^F}^{G^F} \k \otimes \widetilde\rho$ is a direct summand of \[\text{c-}\ind_{\widetilde\CK^F}^{G^F} \ind_{Z(G)^F \CK^F}^{\widetilde\CK^F} H_c^i(Z, \ov\BQ_\ell)[\phi^\dag] \cong \text{c-}\ind_{Z(G)^F\CK^F}^{G^F} H_c^i(Z, \ov\BQ_\ell)[\phi^\dag].\] The proof is finished.  
\end{proof}

\bibliography{bib_ADLV}{}

\providecommand{\bysame}{\leavevmode\hbox to3em{\hrulefill}\thinspace}
\providecommand{\MR}{\relax\ifhmode\unskip\space\fi MR }
\providecommand{\MRhref}[2]{%
  \href{http://www.ams.org/mathscinet-getitem?mr=#1}{#2}
}
\providecommand{\href}[2]{#2}
\begin{thebibliography}{Fin21b}

\bibitem[Adl98]{Adler_98}
Jeffrey~D. Adler, \emph{{Refined anisotropic K-types and supercuspidal
  representations}}, Pacific J. Math. \textbf{185} (1998), no.~1, 1--32.

\bibitem[Bor91]{Borel_91}
Armand Borel, \emph{Linear algebraic groups}, Graduate Texts in Mathematics,
  vol. 126, Springer-Verlag, 1991.

\bibitem[Boy10]{Boyarchenko_10}
Mitja Boyarchenko, \emph{{Characters of unipotent groups over finite fields}},
  Selecta Math. (N.S.) \textbf{16} (2010), no.~4, 857--933.

\bibitem[Boy12]{Boyarchenko_12}
\bysame, \emph{Deligne-{L}usztig constructions for unipotent and $p$-adic
  groups}, Preprint (2012),
  \href{http://arxiv.org/abs/1207.5876}{arXiv:1207.5876}.

\bibitem[BT84]{BruhatT_84}
Fran\c{c}ois Bruhat and Jacques Tits, \emph{Groupes r\'{e}ductifs sur un corps
  local {II}. {S}ch\'emas en groupes. {E}xistence d'une donn\'ee radicielle
  valu\'ee.}, Inst. Hautes \'{E}tudes Sci. Publ. Math. \textbf{60} (1984),
  197--376.

\bibitem[BW16]{BoyarchenkoW_16}
Mitja Boyarchenko and Jared Weinstein, \emph{Maximal varieties and the local
  {L}anglands correspondence for {GL}$(n)$}, J. Amer. Math. Soc. \textbf{29}
  (2016), 177--236.

\bibitem[Cha20]{Chan_siDL}
Charlotte Chan, \emph{The cohomology of semi-infinite {D}eligne--{L}usztig
  varieties}, J. Reine Angew. Math. (Crelle) \textbf{2020} (2020), no.~768,
  93--147.

\bibitem[Cha24]{Chan24}
\bysame, \emph{{The scalar product formula for parahoric Deligne-Lusztig
  induction}}, preprint (2024),
  \href{https://https://arxiv.org/abs/2405.00671}{arXiv:2405.00671}.

\bibitem[Che18]{Chen}
Zhe Chen, \emph{Green functions and higher {D}eligne-{L}usztig characters},
  Doc. Math. \textbf{23} (2018), 2027--2041. \MR{3907799}

\bibitem[CI19]{CI_MPDL}
Charlotte Chan and Alexander~B. Ivanov, \emph{Cohomological representations of
  parahoric subgroups}, Represent. Theory \textbf{25} (2019), 1--26.

\bibitem[CI23]{CI_loopGLn}
\bysame, \emph{On loop {D}eligne--{L}usztig varieties of {C}oxeter type for
  {GL${}_n$}}, Camb. J. Math. \textbf{11} (2023), no.~2, 441--505.

\bibitem[CKZ]{CKZ}
Charlotte Chan, Tasho Kaletha, and Xinwen Zhu, \emph{To appear}.

\bibitem[CO23]{ChanO_21}
Charlotte Chan and Masao Oi, \emph{{Geometric $L$-packets of Howe-unramified
  toral supercuspidal representations}}, J. Eur. Math. Soc. (2023), 1--62,
  \href{https://doi.org/10.4171/jems/1396}{DOI 10.4171/JEMS/1396}.

\bibitem[CO25]{CO_25}
\bysame, \emph{{Green functions for positive-depth Deligne--Lusztig
  induction}}, preprint (2025),
  \href{https://arxiv.org/abs/2506.04449}{arXiv:2506.04449}.

\bibitem[CS17]{ChenS_17}
Zhe Chen and Alexander Stasinski, \emph{The algebraisation of higher
  {D}eligne--{L}usztig representations}, Selecta Math. \textbf{23} (2017),
  no.~4, 2907--2926.

\bibitem[CS23]{ChenS_23}
\bysame, \emph{{The algebraisation of higher level Deligne--Lusztig
  representations II: odd levels}}, preprint (2023),
  \href{https://arxiv.org/abs/2311.05354}{arXiv:2311.05354}.

\bibitem[DL76]{DeligneL_76}
Pierre Deligne and George Lusztig, \emph{Representations of reductive groups
  over finite fields}, Ann. Math. \textbf{103} (1976), no.~1, 103--161.

\bibitem[DS18]{dBS_stability}
Stephen DeBacker and Loren Spice, \emph{{Stability of character sums for
  positive-depth, supercuspidal representations}}, J. Reine Angew. Math.
  \textbf{2018} (2018), 47--78.

\bibitem[Fin21a]{Fin_21b}
Jessica Fintzen, \emph{On the construction of tame supercuspidal
  representations}, Compos. Math. \textbf{157} (2021), no.~12, 2733--2746.
  \MR{4357723}

\bibitem[Fin21b]{Fin_21a}
\bysame, \emph{Types for tame {$p$}-adic groups}, Ann. of Math. (2)
  \textbf{193} (2021), no.~1, 303--346. \MR{4199732}

\bibitem[FKS23]{FKS}
Jessica Fintzen, Tasho Kaletha, and Loren Spice, \emph{{A twisted Yu
  construction, Harish-Chandra characters, and endoscopy}}, Duke Math. J.
  \textbf{172} (2023), 2241--2301.

\bibitem[IN25a]{IN_sheaves}
Alexander~B. Ivanov and Sian Nie, \emph{An alternative construction of
  character sheaves on parahoric subgroups}, preprint (2025),
  \href{https://arxiv.org/abs/2509.18442}{arXiv:2509.18442}.

\bibitem[IN25b]{IvanovNie_25}
\bysame, \emph{{Convex elements and cohomology of deep level Deligne-Lusztig
  varieties}}, preprint (2025),
  \href{https://arxiv.org/abs/2503.13412}{arXiv:2503.13412}.

\bibitem[IN25c]{IvanovNie_24}
\bysame, \emph{{The cohomology of $p$-adic Deligne--Lusztig schemes of Coxeter
  type}}, J. Math. Jussieu \textbf{12} (2025), no.~4, 1429--1462.

\bibitem[Iva18]{Ivanov_15_ADLV_GL2_ram}
Alexander~B. Ivanov, \emph{Ramified automorphic induction and zero-dimensional
  affine {D}eligne-{L}usztig varieties}, Math. Z. \textbf{288} (2018),
  439--490.

\bibitem[Iva20]{Ivanov_18_wild}
\bysame, \emph{Ordinary {GL}${}_2$({F})-representations in characteristic two
  via affine {D}eligne-{L}usztig constructions}, Math. Res. Lett. \textbf{27}
  (2020), no.~1, 141--187.

\bibitem[Iva23]{Ivanov_DL_indrep}
\bysame, \emph{{Arc-descent for the perfect loop functor and $p$-adic
  Deligne--Lusztig spaces}}, J. reine angew. Math. (Crelle) \textbf{2023}
  (2023), no.~794, 1--54.

\bibitem[Kal19]{Kaletha_19}
Tasho Kaletha, \emph{Regular supercuspidal representations}, J. Amer. Math.
  Soc. \textbf{32} (2019), 1071--1170.

\bibitem[Kim07]{Kim}
Ju-Lee Kim, \emph{Supercuspidal representations: an exhaustion theorem}, J.
  Amer. Math. Soc. \textbf{20} (2007), no.~2, 273--320. \MR{2276772}

\bibitem[KP23]{KalethaPrasad_book}
Tasho Kaletha and Gopal Prasad, \emph{Bruhat--tits theory: A new approach}, New
  Mathematical Monographs, 44, Cambridge University Press, Cambridge, UK, 2023.

\bibitem[LN25]{LN_25}
Ben Liu and Sian Nie, \emph{An explicit decomposition of higher deligne-lsuztig
  representations}, preprint (2025),
  \href{https://arxiv.org/abs/2506.12918}{arXiv:2506.12918}.

\bibitem[Lus79]{Lusztig_79}
George Lusztig, \emph{Some remarks on the supercuspidal representations of
  $p$-adic semisimple groups}, In Automorphic forms, representations and
  {L}-functions, {P}roc. {S}ymp. {P}ure {M}ath. \textbf{33} {P}art 1
  ({C}orvallis, {O}re., 1977) (1979), 171--175.

\bibitem[Nie24]{Nie_24}
Sian Nie, \emph{{Decomposition of higher Deligne-Lusztig representations}},
  preprint (2024), \href{https://arxiv.org/abs/2406.06430}{arXiv:2406.06430}.

\bibitem[PR08]{PappasR_08}
George Pappas and Michael Rapoport, \emph{Twisted loop groups and their affine
  flag varieties}, Adv. Math. \textbf{219} (2008), 118--198.

\bibitem[Ser79]{Serre_LF}
Jean-Pierre Serre, \emph{Local fields}, Graduate Texts in Mathematics, vol.~67,
  Springer, 1979, Translated from the French by Marvin Jay Greenberg.

\bibitem[Ser97]{Serre_Galois_cohomology_engl}
\bysame, \emph{Galois cohomology}, Springer Monographs in Mathematics,
  Springer-Verlag, Berlin, Heidelberg, 1997, English translation of {\it
  Cohomologie Galoisienne}.

\bibitem[Sta11]{Stasinski_11}
Alexander Stasinski, \emph{Extended {D}eligne-{L}usztig varieties for general
  and special linear groups}, Adv. Math. \textbf{226} (2011), no.~3,
  2825--2853.

\bibitem[Yu01]{Yu_01}
Jiu-Kang Yu, \emph{Construction of tame supercuspidal representations}, J.
  Amer. Math. Soc. \textbf{14} (2001), 579--622.

\end{thebibliography}
\bibliographystyle{amsalpha}

\end{document}